\documentclass[reqno,12pt]{amsart}
\usepackage{amssymb,amsmath,latexsym,amsthm}
\usepackage{verbatim}
\numberwithin{equation}{section}
\usepackage{hyperref}

\usepackage[all]{xy}
\newcommand{\du}[2]{\langle#1,#2 \rangle}
\newcommand{\ip}[2]{( #1,#2)} 
\setcounter{secnumdepth}{2}

% ---------------------------------------------------
% And fix \gg for positive definite
% ---------------------------------------------------

% ---------------------------------------------------
% Blackboard Bold letters, in alphabetical order for
% convenience of checking list or adding more of them
% ---------------------------------------------------

\def\C{\mathbb{C}}

\def\K{\mathbb{K}}

\def\N{\mathbb{N}}

\def\R{\mathbb{R}}

\def\Z{\mathbb{Z}}

%%%%%%%%%%%%%%%%%%
% Greek letters
%%%%%%%%%%%%%%%%%%%%%%

% ---------------------------------------------------------
% Calligraphic (script) letters, in alphabetical order for
% convenience of checking list or adding more of them
% ---------------------------------------------------------

\def\cO{\mathcal{O}}

\def\cR{\mathcal{R}}
\def\cS{\mathcal{S}}

\def\Re{{\rm Re}\,}

\def\rank{{\rm rank}\,}

\theoremstyle{plain}
\newcommand{\gra}{\mathrm{gr\, }}

\newtheorem{theorem}[equation]{Theorem}

\newtheorem{lemma}[equation]{Lemma}

\newtheorem{proposition}[equation]{Proposition}

\newtheorem{definition}[equation]{Definition}

\newtheorem{remark}[equation]{Remark}

%%%% Greek

%%%%%%%%%%%% From GO
\newcommand{\fa}{\mathfrak{a}}

\newcommand{\fg}{\mathfrak{g}}
\newcommand{\fh}{\mathfrak{h}}
\newcommand{\fk}{\mathfrak{k}}

\newcommand{\fm}{\mathfrak{m}}
\newcommand{\fn}{\mathfrak{n}}

\newcommand{\fp}{\mathfrak{p}}
\newcommand{\fq}{\mathfrak{q}}
\newcommand{\fs}{\mathfrak{s}}
\newcommand{\fu}{\mathfrak{u}}
\newcommand{\fz}{\mathfrak{z}}

%bold
\newcommand{\bc}{\mathbf{c}}

\newcommand{\pr}{\mathrm{proj}}
\newcommand{\rr}{\mathrm{r}}

%%%%%%%%%%%%%%% Lie groups %%%%%%%%%%%%%%%%

\newcommand{\SO}{\mathrm{SO}}
\newcommand{\SL}{\mathrm{SL}}
\newcommand{\Sp}{\mathrm{Sp}}

\newcommand{\SU}{\mathrm{SU}}
\newcommand{\rU}{\mathrm{U}}

\newcommand{\Hom}{\mathrm{Hom}}

\newcommand{\id}{\mathrm{id}}
\newcommand{\gL}{\Lambda }
\newcommand{\bK}{\mathbf{K}}
\newcommand{\bM}{\mathbf{M}}
\newcommand{\bX}{\mathbf{X}}
\newcommand{\bY}{\mathbf{Y}}
\newcommand{\bZ}{\mathbf{Z}}

\newcommand{\thetad}{\dot{\theta}}
\newcommand{\etad}{\dot{\eta}}

\newcommand{\sd}{\dot{\sigma}}

\newcommand{\bmid}{\,\, \right|\,\,}
\newcommand{\vin}{\varinjlim}

\def\sideremark#1{\ifvmode\leavevmode\fi\vadjust{\vbox to0pt{\vss% the remark
 \hbox to 0pt{\hskip\hsize\hskip1em%                          will appear only
\vbox{\hsize2cm\tiny\raggedright\pretolerance10000 %          on the side
 \noindent #1\hfill}\hss}\vbox to8pt{\vfil}\vss}}} %          in 2cm
                                                   %          wide box

\newcommand{\hf}{\widehat{f}}

\newcommand{\vv}{\varphi^\vee}

\newcommand{\G}{\Gamma}
\newcommand{\gG}{\mathrm{gr}\, \Gamma}

\newcommand{\gGm}[1]{(\mathrm{gr}\, \Gamma)_{#1}}

\newcommand{\kX}[1]{k_{\bZ}(#1 )}
\newcommand{\kXi}[1]{k_{\Xi} (#1)}
\newcommand{\Sph}[1]{\mathop{\mathrm{Sph}_{#1}}}
\newcommand{\alimit}{a\to \infty}

\newcommand{\Ad}{\mathrm{Ad}}

\begin{document}
\title[Radon transform for limits of symmetric spaces]{The Radon transform and its dual for limits of symmetric spaces}

\author{Joachim Hilgert}
\address{Institut f\"ur Mathematik, Universit\"at Paderborn, 33098 Paderborn, Germany}
\email{hilgert@upb.de}
\author{Gestur \'{O}lafsson}
\address{Department of Mathematics, Louisiana State University, Baton Rouge,
LA 70803, U.S.A.}
\email{olafsson@math.lsu.edu}
\thanks{The research of G. \'Olafsson was supported by  DMS-1101337}

\date{October 13, 2013}

\begin{abstract}
The Radon transform and its dual are central objects in geometric analysis on Riemannian symmetric spaces of the noncompact type. In this article we study algebraic versions of those transforms on inductive limits of symmetric spaces. In particular, we show that normalized versions exists on some spaces of regular functions on the limit. We give a formula for the normalized transform using integral kernels and relate them to limits of double fibration transforms on spheres.
\end{abstract}
\maketitle

\section{Introduction}
\noindent
Let $G_o$ be a classical noncompact connected semisimple Lie group and $G$ its complexification. We fix a Cartan involution
$\theta :G_o\to G_o$ on $G_o$ and denote the holomorphic extension to $G$ by the same letter. Let $K=G^\theta$ and let $K_o=K\cap G_o$ be the maximal compact subgroup corresponding
to $\theta$. Then $\bX=G_o/K_o$ is a Riemannian symmetric space of the noncompact type. The space $\bX$  is contained in its complexification $\bZ=G/K$. The subscript ${}_o$ will be used to denote subgroups in $G_o$. Dropping the index will then stand for the corresponding complexification in $G$.

Let $P_o=M_oA_oN_o$ be a minimal parabolic subgroup of $G_o$ with $A_o\subset \{a\in G_o\mid \theta (a)=a^{-1}\}$ and $M_o=Z_{K_o}(A_o)$. The space $\Xi_o=G_o/M_oN_o$ is the space of horocycles in $\bX$. We denote base point in $\bX$ by $x_o=\{K_o\}$ and the base point $\{M_oN_o\}$ in $\Xi_o$ by $\xi_o$. The (horospherical) Radon transform is the integral transform, initially defined on compactly supported functions on $\bX$, given by
\[\cR (f)(g\cdot \xi_o)=\int_{N_o} f(gn\cdot x_o)\, dn\]
for a certain normalization of the invariant measure $dn$ on $N_o$. The dual transform $\cR^*$ maps
continuous functions on $\Xi_o$ to continuous functions on $\bX$ and is given by
\[\cR^*(\varphi )(g\cdot x_o)=\int_{K_o} \varphi (gk\cdot \xi_o)\, dk,\]
where $dk$ denotes the invariant probability measure on $K_o$. If $f$ and $\varphi$ are compactly supported, then
\[\int_{\Xi} \cR(f)(\xi )\varphi (\xi)\, d\xi =\int_{\bX} f(x)\cR^*(\varphi )(x)\, dx\]
for suitable normalizations of the invariant measures on $\bX$, respectively $\Xi_o$. This explains why $\cR^*$ is called the dual Radon transform. For more detailed discussion we refer to Section \ref{se:RandDual}.

For a complex subgroup $L\subset G$ we call a holomorphic function $f\colon G/L\to \C$ regular if the orbit $G \cdot f$ with respect to the natural representation spans a finite dimensional subspace. We denote the $G$-space of regular functions by $\C[G/L]$. If $L_o\subset G_o$ is a subgroup such that $G_o/L_o$ can be viewed as a real subspace of its complexification $G/L$, then one calls a smooth function on $G_o/L_o$ regular, if its $G_o$-orbit spans a finite dimensional space. Since there is a bijection between regular functions on $G_o/L_o$ and $G/L$ we restrict our attention to $\C[G/L]$. The dual Radon transform can be extended to the space of regular functions on $\Xi$ but the integral defining the Radon transform is in general not defined for regular functions. In fact, a regular function on $\bZ$ can be $N_o$-invariant so the integral is infinite. This problem was first discussed in \cite{HPV02} and then further developed in \cite{HPV03}. Let us describe the main idea from \cite{HPV02} here. We refer to main body of the article for more details.

Denote the spherical representation of $G_o$ and $G$ with highest weight $\mu \in \fa_o^*$ by $(\mu_\mu,V_\mu)$, and its dual by $(\pi_\mu^*,V_\mu^*)$. The duality is written $\du{w}{\nu}$. Note that $(\pi_\mu,V_\mu)$ is unitary on a compact real form $U$ which we choose so that $U\cap G_o=K_o$. We fix a highest weight vector $u_\mu\in V_\mu$ of length one and a $K$-fixed vector $e_\mu^*\in V_\mu^*$ such that
$\du{u_\mu}{e_\mu^*}=1$. Fix a highest weight vector $u_\mu^*$ in $V_\mu^*$ such that $\du{u_\mu}{\pi_\mu^*(s_o)u_\mu^*}=1$, where $s_o\in K$ represents the longest Weyl group element. For $w\in V_\mu$ and $g\in G$ let
\[f_{w,\mu}(g\cdot x_o):=\du{w}{\pi_\mu^*(g)e_\mu^*}\quad \text{ and }\quad \psi_{w,\mu}(g\cdot \xi_o) :=\du{w}{\pi_\mu^*(g)u_\mu^*} \, . \]
Every regular function on $\bX$ is a finite linear combination of functions of the form $f_{w,\mu}$ and similarly for $\Xi$. The \textit{normalized Radon transform} on the space $\C[\bZ]$ of regular functions on $\bZ$ can now be defined by
\[\Gamma (f_{w,\mu}) := \psi_{w,\mu}\, .\]
The transform
\[\Gamma^{-1}(\psi_{w,\mu}):= f_{w,\mu}\]
defines a $G$-equivariant map $\C[\Xi]\to \C[\bX]$ which is inverse to $\Gamma$. Restricted to each $G$-type, the transform $\Gamma^{-1}$ is, up to a normalization given in Lemma \ref{le:cMu},  the dual Radon transform $\cR^*$. It is also shown in \cite{HPV03}, the dual Radon transform on $\C[\Xi]$ can be described as a limit of Radon transform over spheres, see Section \ref{se:RT-spheres} for details.

Our aim in this article is to study the normalized transforms $\Gamma$ and $\Gamma^{-1}$ as well as their unnormalized counterparts for certain inductive limits of symmetric spaces $\bX_j\subset \bZ_j$, called propagations of symmetric spaces, introduced in Section \ref{se:3.1}. This study is based on results from \cite{OW11a,OW11b} and \cite{DOW12} on inductive limits of spherical representations, which we use to study spaces of regular functions on the limit. More precisely, in Section \ref{se:RegInf} we consider two such spaces of regular functions, the projective limit $\varprojlim \C[\bZ_j]$ and the inductive limit $\C_i[\bZ_\infty]=\varinjlim \C[\bZ_j]$. The first main result is Theorem \ref{th:4.19} which describes how the graded version of $\Gamma$ extends to the projective limit.

We introduce the Radon transform and its dual in Section \ref{se:RT} and in Section \ref{se:RT-spheres} we recall the results from \cite{HPV03} about the Radon transform as a limit of a double fibration transform associated the spheres in $\bX$. In Section \ref{se:kernel} we show that the normalized Radon transform and its dual can be represented as an integral transform against kernel functions. Here the integral is taken over the compact group $U$. The corresponding result for the direct limit is Theorem \ref{th:KernelInf}.

Many of the results mentioned so far are valid for propagations of symmetric spaces of arbitrary rank, which means that they apply also to the case of infinite rank. For some results, however, we have to require that the rank of the symmetric space $\varinjlim X_j$ is finite. This is the case in particular in Section \ref{se:RInfty}, where we define the dual Radon transform $\cR^*$ for spaces of finite rank and connect it to the normalized dual Radon transform $\Gamma^{-1}$, see Theorem \ref{th:522}. Moreover, we define the Radon transform over spheres in this context, and connect it to the dual  Radon transform in Theorem \ref{th:525}.

{\sc acknowledgement.} We would like to thank E. B. Vinberg, who suggested to us that dual horospherical Radon transforms may exist also for limits of symmetric spaces.

\tableofcontents

\section{Finite dimensional geometry}\label{sec1}
\noindent
In this section we recall the necessary background from structure theory of finite dimensional symmetric spaces and related representation theory. Most of the material is in this section is standard, but we use this section also to set up the notation for later sections.

\subsection{Lie groups and symmetric spaces}
Lie group will always be denoted by uppercase Latin letters and their Lie algebra will be denoted by the corresponding lower case German letters. If $G$ and $H$ are Lie groups and $\theta :G\to H$ is a homomorphism, then the derived homomorphism is denoted by   $\thetad : \fg \to \fh$. If $G=H$, then
\[G^\theta = \{a\in G\mid \theta (a)=a\}\quad \text{and} \quad \fg^\theta =\{X\in\fg\mid \theta (X)=X\}\, .\]

From now on $G$ will stand for a connected simply connected complex semisimple Lie group with Lie algebra $\fg$. Let $U$ be a compact real form of $G$ with Lie algebra $\fu$ and let $\sd : \fg\to \fg$ denote the conjugation on $\fg$ with respect to $\fu$. We denote by $\sigma : G\to G$ the
corresponding involution on $G$. Then, as $G$ is simply connected, $U=G^\sigma$ and $U$ is simply connected.

Let $\theta : U\to U$ be a nontrivial involution and $K_o:=U^\theta$. Then $K_o$ is connected and $U/K_o$ is a simply connected symmetric
space of the compact type. Extend $\thetad :\fu\to \fu$ to a complex linear involution, also denoted by $\thetad$, on $\fg$. Denote
by $\theta : G\to G$ the holomorphic involution
with derivative $\thetad$. Write $\fu=\fk_o\oplus \fq_o$ where $\fk_o:= \fu^{\thetad}$ and $\fq_o:=\{X\in \fu \mid \thetad (X)=-X\}$. Let $\fs_o:= i\fq_o$ and $\fg_o:=\fk_o \oplus \fs_o$. Then $\fg_o$ is a semisimple real Lie algebra. Denote by  $G_o$ the analytic subgroup of $G$ with Lie algebra $\fg_o$. Then $G_o$ is $\theta$-stable, $G_o^\theta=K_o$, and $G_o/K_o$ is a symmetric space of the noncompact type. We have
$G_o=G^{\theta\sigma}$.

Let $K :=G^{\theta}$. Then $K_o=K\cap U=K\cap G_o$. Let $\bZ=G/K$, $\bX=G_o/K_o$, and $\bY=U/K_o$. As $\sigma $ and $\eta:=\sigma\theta$ map $K$ into itself it follows that both involutions define antiholomorphic involutions on $\bZ$ and we have
\[\bX=\bZ^\eta \quad \text{ and } \quad \quad \bY=\bZ^\sigma\, .\]
In particular $\bX$ and $\bY$ are transversal totally real submanifolds of $\bZ$.

If $V$ is a vector space over a field $\K$, then $V^*$ denotes the algebraic dual of $V$. If $V$ is a topological vector space, then the same notation will be used for the continuous linear forms. If $V$ is finite dimensional, then each $\alpha \in V^*$ is continuous.

Let $\fa_o$ be a maximal abelian subspace of $\fs_o$ and $\fa=\fa_o^\C$. For $\alpha \in \fa^*_o\subset \fa^*$ let
\[\fg_{o\alpha} :=\{X\in \fg_o\mid (\forall H \in \fa_o )\, [H,X]=\alpha (H)X\} \]
and
\[ \fg_\alpha :=\{X\in \fg\mid (\forall H \in \fa )\, [H,X]=
\alpha (H)X\} \, .\]
If $\alpha \not=0$ and $\fg_\alpha\not=\{0\}$, then $\fg_\alpha =\fg_{o\alpha} \oplus i\fg_{o\alpha}$ and $\fg_\alpha \cap \fu =\{0\}$. The linear form $\alpha\in\fa^*\setminus \{0\}$   is called a (restricted) root if $\fg_\alpha\not=\{0\}$. Denote by $\Sigma :=\Sigma (\fg,\fa)\subset \fa^*$ the set of roots.  Let $\Sigma_0 := \Sigma_0(\fg,\fa) := \{\alpha \in\Sigma\mid 2\alpha\not\in\Sigma\}$, the set of nonmultipliable roots. Then $\Sigma_0$ is a root system in the usual sense and the Weyl group $W$ corresponding to $\Sigma $ is the same as the Weyl group generated by the reflections $s_\alpha$, $\alpha \in \Sigma_{0}$. The Riemannian symmetric spaces $\bX$ and $\bY$ are irreducible if and only if the root system $\Sigma_{0}$ is irreducible.

Let $\fn:=\bigoplus_{\alpha\in\Sigma^+}\fg_\alpha$, $\fm:=\fz_{\fk}(\fa)=\{X\in\fk \mid [X,\fa ]=\{0\} \}$ and $ \fp:=\fm\oplus \fa\oplus \fn$.
All of those algebras are defined over $\R$ and the subscript ${}_o$ will indicate the intersection of those algebras with $\fg_o$. This intersection can also be described as the $\etad$ fixed points in the complex Lie algebra.

Define the parabolic subgroups $P_o:=N_{G_o}(\fp_o)\subset P:=N_G(\fp)$. We can write  $P_o=M_oA_oN_o$ (semidirect product) where $M_o:=Z_K(A_o)$, $A_o:=\exp \fa_o$, and $N_o:=\exp \fn_o$. Similarly we have  $P=MAN$. Let $F:=K\cap \exp i\fa_o$. Then each element of $F$ has order two and $M_o=F(M_o)^\circ$ where ${}^\circ$ denotes the connected component containing the identity element. We let $\Xi_o:=G_o/M_oN_o\subset \Xi:=G/MN$. As $\theta\sigma$ leaves $MN$ invariant it follows that $\Xi_o=\Xi^{\theta\sigma}$ is a totally real submanifold of $\Xi$.

Note than $K\cap MN =M$, so we obtain the following double fibration, which is of crucial importance for the Radon transforms:

\begin{equation}\label{Radonfib}
\xymatrix@M=7pt{
&G/M\ar [dl]_{p}\ar [dr]^{q}&\\
\bZ=G/K&&\Xi=G/MN}
\end{equation}

\subsection{Group spheres}

Let $\bX=G_o/K_o$ be as in the previous section. Denote by $s$ the symmetry of $\bX$ with respect to $x_o$. Then $\theta(g)=sgs^{-1}$ for $g\in G_o$.
Denote by $\mathrm X(A)$ the (additively written) group $\Hom (A_o,\R_+^\times)$ (where $\R_+^\times$ stands for the multiplicative group of positive numbers).  Then
$X (A)\simeq \fa^*$ where the isomorphism is given by $\mu \mapsto \chi_\mu$,
$\chi_\mu (a)=a^\mu $. We will simply write $\mu (a)$ for $\chi_\mu (a)$.

A {\it group sphere} in $\bX$ is an orbit of a maximal compact subgroup of $G_o$. Because of the Cartan decomposition $G_o=K_oA_oK_o$, and the
fact that all maximal compact subgroups in $G_o$ are $G_o$-conjugate,
any group sphere $\cS$ is of the form $gK_oa^{-1}\cdot
x_o=K_o^{g}ga^{-1}\cdot x_o$ with $g\in G_o$ and $a\in A_o$. The point $g\cdot
x_o$ is called the {\it center} of $\cS$ and $a$ is called the {\it radius} of $\cS$. The
group sphere of radius $a$ with center at $x$ is denoted by $\cS_a(x)$.

The group $G_o$ acts transitively  on the set $\Sph{a}\bX$ of group spheres of radius $a$.
The stabilizer of a group sphere $\cS:=\cS_a(g\cdot x_o)$ is a compact subgroup of $G$
containing the stabilizer $K_o^g$ of the point $g\cdot x_o$ and hence coinciding with it.
Since $g\cdot x_o$ is the only fixed point of $K_o^g$, it is uniquely determined by $\cS$.
Moreover, $\cS_{a_1}(x)=\cS_{a_2}(x)$ if and only if $a_1$ and $a_2$ are $W$-
equivalent.

We shall say that $a\in A_o$ tends to infinity, written $a\to\infty$, if $\alpha (a)\to \infty$ for any $\alpha \in \Sigma^+$.

The sphere
\begin{equation*}
\cS_a:=\cS_a(a\cdot x_o)=K^a \cdot x_o
\end{equation*}
of radius $a$ passes through $x_o$. It is known that it converges to the horosphere $\xi_o=N_o\cdot o$ as $a\to \infty$ (see, e.g., \cite{Ebe}, Proposition 2.6, and \cite{Ebe2}, p. 46).

Taking the sphere $\cS_a$ as the base point for the homogeneous space $\Sph{a}\bX$, we obtain the representation $\Sph{a}\bX=G_o/K_o^a$. This gives rise to the double fibration

\begin{equation}\label{sph-a-Radonfib}
\xymatrix@M=7pt{
&G_o/(K_o\cap K_o^a)\ar [dl]_{p_a}\ar [dr]^{q_a}&\\
\bX=G_o/K_o&&G_o/K_o^a=\Sph{a}\bX}
\end{equation}

Obviously, $K_o\cap K_o^a=Z_{K_o}(a)$. If $a$ is regular, then $Z_{K_o}(a)=Z_{K_o}(A_o)=M_o$. In this case the double fibration (\ref{sph-a-Radonfib}) reduces to

\begin{equation*}
\xymatrix@M=7pt{
&G_o/M_o\ar [dl]_{p} \ar [dr]^{q_a}& \\
\bX&&\Sph{a}\bX
}
\end{equation*}
 
\subsection{Spherical representations}
In this subsection we describe the set of spherical representations and the set of fundamental weights. Each irreducible finite dimensional representation $\pi$ of $U$ or $G_o$ extends uniquely to a holomorphic irreducible representation $\pi$ of $G$ and every irreducible holomorphic representation $\tau$ of $G$ is a holomorphic extension of an irreducible representation of $U$ and $G_o$. We will therefore concentrate on irreducible holomorphic respresentations of $G$. We will denote by $\pi^U$ respectively $\pi^o$ the restriction of a holomorphic representation $\pi$ to $U$ respectively $G_o$.

For a representation $\pi$ of a topological group $H$ or a Lie algebra $\fh$ we write $V_\pi$ for the vector space on which $\pi$ acts. Let
\[V^H_\pi=\{u\in V_\pi\mid (\forall k\in H)\,\, \pi (k)u=u\}\, .\]
Similarly
\[V^{\fh}_\pi=\{u\in V_\pi\mid (\forall X\in \fh)\,\, X u=0\}\, .\]
If $H$ is a connected Lie group with Lie algebra $\fh$ and $V_\pi$ a smooth representation of $H$, then $\fh$ acts on $V_\pi$ and $V^H_\pi=V^\fh_\pi$.

Back to our setup, as $K_o$ and $K$ are connected, it follows that if $\pi$ is a irreducible finite dimensional holomorphic representation of $G$ (and hence analytic), then
\[V_\pi^{K}:=V^{K_o}_\pi=V^\fk_\pi=V^{\fk_o}_\pi\, .\]
We say that $\pi$ is spherical if $V_\pi^{K}\not=\{0\}$ and that $\pi$ is conical if $V_\pi^{M N}\not=\{0\}$. Note that even if $M_o$ is not connected, then
\[V_\pi^{M N}=V_\pi^{M_oN_o}\, .\]
In fact, the inclusion $\subseteq$ is trivial and for the converse it suffices to note that $V_\pi^{M_oN_o}\subseteq V_\pi^{\fm_o+\fn_o}=:V_\pi^{\fm+\fn}$.

Define a representation $\pi^*$ on $V_\pi^*=V_{\pi^*}$ by
\[\du{v}{\pi^*(g)\nu}:=\du{\pi (g^{-1})v}{\nu}\, ,\quad g\in G, \, v\in V_\pi,\, \nu\in V_\pi^*\, .\]

For the following theorem see \cite{H94}, Thm. 4.12 and \cite{H84}, Thm. V.1.3 and Thm. V.4.1.

\begin{theorem} Let $\pi$ be an irreducible holomorphic representation of $G$. Then
the following holds:
\begin{enumerate}
\item
$\pi$ is spherical if and only if $\pi$ is conical. In that case
\[\dim V_\pi^{K}=\dim V_{\pi}^{M N}=1\, .\]
\item $\pi$ is spherical if and only if $\pi^*$ is spherical.
\end{enumerate}
\end{theorem}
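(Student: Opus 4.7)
The plan is to reduce both sphericity and conicality to a single condition on the $M$-action on the one-dimensional highest weight line, dispatch claim (2) by Haar averaging, and then obtain the nontrivial direction of claim (1) from an Iwasawa-type positivity computation.

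I would first note that, since $\pi$ is irreducible and finite-dimensional, standard highest-weight theory forces $V_\pi^\fn = \C u_\mu$ to be the one-dimensional highest weight line. Because $M = Z_K(A)$ centralizes $A$, it preserves every $\fa$-weight space, and since it normalizes $\fn$ it preserves $V_\pi^\fn$; hence $M$ acts on $\C u_\mu$ through a character $\chi_\mu\colon M\to\C^\times$, and
\[
V_\pi^{MN} = V_\pi^{M_oN_o} = V_\pi^M\cap V_\pi^\fn \subseteq \C u_\mu,
\]
so $\dim V_\pi^{MN}\le 1$, with equality iff $\chi_\mu\equiv 1$. This yields the dimension assertion for $V_\pi^{MN}$ in the conical case and reduces conicality to the condition ($\ast$): $M$ fixes $u_\mu$ pointwise. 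For claim (2), given $v\in V_\pi^{K_o}\setminus\{0\}$ I would pick $\nu\in V_\pi^*$ with $\nu(v)\ne 0$ and set $\nu_0:=\int_{K_o}\pi^*(k)\nu\,dk$; then $\nu_0$ is $K_o$-fixed, hence $K$-fixed (as $K$ is connected with Lie algebra $(\fk_o)_\C=\fk$), and $\nu_0(v)=\nu(v)\ne 0$. The converse follows by applying this to $\pi^{**}\cong\pi$.

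For the hard direction of claim (1), assuming ($\ast$), I would set $v:=\int_{K_o}\pi(k)u_\mu\,dk$, which is manifestly $K_o$-fixed, and show $v\ne 0$ by pairing with the linear functional $\ell\in V_\pi^*$ determined by $\ell(u_\mu)=1$ and $\ell|_{V_\pi(\lambda)}=0$ for $\lambda\ne\mu$; a direct check shows $\ell$ has weight $-\mu$ and is annihilated by $\pi^*(\fn^-)$, where $\fn^-$ denotes the opposite nilpotent subalgebra. The matrix coefficient $\phi(g):=\langle\pi(g)u_\mu,\ell\rangle$ is then right $N_o$-invariant (since $u_\mu\in V_\pi^\fn$), left $N_o^-$-invariant, and scales by $a^\mu$ under both left and right translation by $a\in A_o$. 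Writing $g=\bar n a\kappa$ in the opposite Iwasawa decomposition $G_o=N_o^-A_oK_o$, these equivariances collapse $\phi(k)$ on $K_o$ to a strictly positive exponential of the form $e^{\mu(H(k))}$ for an appropriate $\fa_o$-valued Iwasawa projection $H$; integrating will yield $\ell(v)=\int_{K_o}e^{\mu(H(k))}\,dk>0$, so $v\ne 0$ and $\pi$ is spherical. Conversely, if $\pi$ is spherical, claim (2) produces a $K$-fixed vector in $V_\pi^*$, so applying the same argument to $\pi^*$ shows $\pi^*$ is conical; a short duality computation using $\langle\pi(m)u_\mu,\ell\rangle=\langle u_\mu,\pi^*(m^{-1})\ell\rangle$ then transfers ($\ast$) back to $\pi$. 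The bound $\dim V_\pi^K=1$ is classical Cartan-Helgason and will follow from the injectivity of the evaluation $V_\pi^K\to\C$, $w\mapsto\ell(w)$, via the same positivity applied with $w$ in place of $u_\mu$.

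The hard part will be the positivity $\phi(k) = e^{\mu(H(k))}>0$ on $K_o$. Everything else is either formal (the reduction uses only irreducibility, the converse direction of (1) is a duality exercise) or a one-line Haar-averaging (claim (2) and the dimension bound). Establishing the positivity requires combining the opposite Iwasawa decomposition $G_o=N_o^-A_oK_o$ with the extremality of $u_\mu$ and $\ell$ as $\fn$- and $\fn^-$-invariants, respectively, so that only the $A_o$-factor of $g=\bar n a\kappa$ survives in the pairing.
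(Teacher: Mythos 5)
The paper offers no proof of this statement at all: it is quoted from Helgason (the Cartan--Helgason theorem, \cite[Thm.~V.1.3, Thm.~V.4.1]{H84} and \cite[Thm.~4.12]{H94}). Your proposal is essentially an attempt to reconstruct Helgason's classical proof, and several pieces of it are sound (the Haar-averaging proof of (2), the positivity computation for the conical vector via the dense cell $\overline N_o M_oA_oN_o\cap K_o$, the injectivity argument for $\dim V_\pi^K=1$). However, two steps are genuinely broken. The first is your opening claim that ``standard highest-weight theory forces $V_\pi^{\fn}=\C u_\mu$.'' Here $\fn=\bigoplus_{\alpha\in\Sigma^+}\fg_\alpha$ is the nilradical of the \emph{minimal parabolic} $\fp=\fm\oplus\fa\oplus\fn$, not of a Borel subalgebra of $\fg$: unless $\fg_o$ is split, $\fa$ is not a Cartan subalgebra and $\fm\neq 0$. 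Consequently $V_\pi^{\fn}$ is the highest \emph{restricted} weight space; it is an irreducible module for the Levi $MA$ and is in general higher-dimensional. For instance, for $G_o=\SU^*(4)$ and $\pi$ the standard representation of $G=\SL(4,\C)$ one has $\fm\cong\lsl(2,\C)\oplus\lsl(2,\C)$ and $V_\pi^{\fn}\cong\C^2$ with $\fm$ acting nontrivially. This false claim is load-bearing: your functional $\ell$ is only well defined when $V_\pi(\mu)$ is a line, and the equivalence of conicality with your condition $(\ast)$ requires knowing that $u_\mu$ generates $V_\pi^{\fn}$ as a $\cU(\fm)$-module (true, by Levi-irreducibility of the space of $\fn$-invariants, but this is exactly the structural input you must supply). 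In effect, $\dim V_\pi^{\fn}=1$ is part of what characterizes sphericality/conicality, not a free hypothesis.

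The second gap is a circularity in the direction ``spherical $\Rightarrow$ conical.'' You write that (2) produces a $K$-fixed vector in $V_\pi^*$, ``so applying the same argument to $\pi^*$ shows $\pi^*$ is conical''---but the only argument you have constructed derives sphericality \emph{from} conicality, so nothing in the proposal establishes conicality of a spherical representation. The standard repair is a direct lemma: let $0\neq e\in V_\pi^{K}$ and write $e=\sum_\lambda e_\lambda$ over restricted $\fa$-weights. The Iwasawa decomposition $\fg=\theta(\fn)\oplus\fa\oplus\fk$ gives $V_\pi=\cU(\theta(\fn))\,\mathrm{span}\{e_\lambda\}$; since $\cU(\theta(\fn))$ strictly lowers restricted weights and all weights are $\le\mu$, comparing components in $V_\pi(\mu)$ yields $e_\mu\neq 0$ and $V_\pi(\mu)=\C e_\mu=V_\pi^{\fn}$. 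As $M\subseteq K$ fixes $e$ and preserves each restricted weight space, it fixes $e_\mu$, which is therefore a nonzero conical vector; this lemma simultaneously delivers $\dim V_\pi^{MN}\le 1$ and the one-dimensionality of $V_\pi^{\fn}$ in the spherical case, after which your positivity argument for the converse direction and your duality bookkeeping go through along the lines of Helgason's original proof.
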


Let
\begin{eqnarray}
 \Lambda^+(G,K)
&:=&\left\{ \mu \in i\fa^*_o\,\left|\, \frac{\ip{ \mu }{\alpha}}{\ip{ \alpha}{ \alpha}} \in \Z^+  \text{ for all }  \alpha \in \Sigma^+\right.\right\}\label{eq:L}\\
&=& \left\{ \mu \in i\fa^*_o\,\left|\, \frac{\ip{ \mu }{\alpha}}{\ip{ \alpha}{ \alpha}} \in \Z^+  \text{ for all }  \alpha \in \Sigma^+_0\right.\right\}\, .\nonumber
\end{eqnarray}
We mostly write $\gL^+$ for $\gL ^+(G,K)$. Let $W=N_{K_o} (\fa_o)/Z_{K_o}(\fa_o)$ denote the Weyl group. The parametrization of the spherical
representations is given by the following theorem.
 
\begin{theorem} \label{t-CH}  Let $\pi$ be
a  irreducible holomorphic representation of $G$, and $\mu$ its highest weight. Let $w_o\in W$ be such that $w_o \Sigma^+=-\Sigma^+$. Then the following are equivalent.
\begin{enumerate}
\item $\pi$ is spherical.
\item $\mu\in i\fa^*_o$ and $\mu \in \Lambda^+$.
\end{enumerate}
Furthermore, if $\pi$ is spherical with highest weight $\mu\in \gL^+$, then $\pi^*$ has highest weight $\mu^*:=-w_o\mu$.
\end{theorem}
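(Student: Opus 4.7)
The plan is to prove this as the classical Cartan--Helgason theorem, reducing sphericity to conicality via the previous theorem and then applying standard highest-weight analysis.

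For the forward direction of (1), assuming $\pi$ is spherical, I obtain a nonzero $v\in V_\pi^{MN}$ from the previous theorem. Since $\fn=\bigoplus_{\alpha\in\Sigma^+}\fg_\alpha$ is the nilradical of a Borel subalgebra of $\fg$ containing $\fa$, the $N$-invariants in the irreducible module $V_\pi$ coincide with the highest-weight line, so $v$ is a scalar multiple of $u_\mu$. Unitarity of $\pi|_U$ combined with $i\fa_o\subset\fu$ forces $\mu$ to have the appropriate reality on $\fa_o$, placing $\mu\in i\fa_o^*$. The integrality condition $\mu\in\Lambda^+$ then follows from the standard $\lsl_2$-triple argument: for each simple restricted root $\alpha\in\Sigma^+_0$, a representative $s_\alpha\in N_{K_o}(\fa_o)\subset K_o$ of the Weyl reflection acts on $V_\pi$ and sends $u_\mu$ to a weight vector of weight $s_\alpha(\mu)=\mu-\tfrac{2(\mu,\alpha)}{(\alpha,\alpha)}\alpha$; finite-dimensional $\lsl_2$-theory applied to $\{e_\alpha,f_\alpha,h_\alpha\}$ then forces the exponent to be a nonnegative integer, giving exactly the defining condition of $\Lambda^+$.

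For the converse direction of (1), starting from $\mu\in\Lambda^+\cap i\fa_o^*$, I realize the finite-dimensional irreducible highest-weight module $V_\pi$ (existence is guaranteed by the integrality) and construct a nonzero $K_o$-fixed vector by averaging $u_\mu$ over $K_o$ with respect to normalized Haar measure. The reality condition $\mu\in i\fa_o^*$ ensures the resulting matrix coefficient against $u_\mu^*$ is the elementary spherical function of parameter $\mu$, which takes value $1$ at the identity and is therefore nonzero; hence $V_\pi^K\neq\{0\}$ and $\pi$ is spherical.

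Statement (2) follows because $\pi^*$ is spherical by part (2) of the previous theorem, and the highest weight of $\pi^*$ relative to a Cartan $\fh\supseteq\fa$ is $-w_0^{\mathrm{full}}\lambda$, where $\lambda$ is the highest weight of $\pi$ on $\fh$ and $w_0^{\mathrm{full}}$ is the longest element of the full Weyl group $W(\fg,\fh)$. Since $w_0^{\mathrm{full}}$ preserves the subspace $\fa^*\subset\fh^*$ and restricts there to the longest element $w_o\in W$ (it sends $\Sigma^+$ to $-\Sigma^+$ by definition of $w_o$), restriction to $\fa$ yields $\mu^*=-w_o\mu$. The main obstacle is the converse direction in (1): the explicit production of a nonzero spherical vector requires more than highest-weight bookkeeping, and is the content of \cite{H84}, Theorem V.4.1.
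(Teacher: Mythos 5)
The paper does not actually prove this theorem: its ``proof'' is the citation of \cite{H84}, Theorem V.4.1 (the Cartan--Helgason theorem). Your sketch follows the classical route, and for the genuinely hard step --- producing a nonzero $K$-fixed vector from the integrality and reality conditions --- you end up deferring to exactly the same reference, so in substance you and the paper land in the same place. Two points in your sketch, however, should not stand as written. First, the claim that the $N$-invariants of $V_\pi$ coincide with the highest-weight line is false in general: $\fn=\bigoplus_{\alpha\in\Sigma^+}\fg_\alpha$ is only part of the nilradical of a Borel subalgebra of $\fg$ (the remainder sits inside $\fm$), and $V_\pi^{\fn}$ is an irreducible $\fm\oplus\fa$-module whose dimension can exceed one. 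What saves the argument is that your $v$ is $MN$-invariant, hence annihilated by $\fm$ as well, hence by the full nilradical $\fn_{\fm}^{+}\oplus\fn$ of the Borel; only then is $v$ forced onto the highest-weight line (and, as a byproduct, $\mu$ vanishes on $\fh\cap\fm$, which is what the condition ``$\mu\in i\fa_o^*$'' encodes).

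Second, the converse as you present it is circular. Averaging $u_\mu$ over $K_o$ certainly produces a $K_o$-fixed vector; the entire difficulty is to show that it is nonzero. Pairing the average against $u_\mu^*$ gives $\int_{K_o}\du{\pi(k)u_\mu}{u_\mu^*}\,dk$, which is a $\bc$-function-type integral (cf.\ Lemma \ref{le:cMu}, where exactly this constant is identified with $\sqrt{\bc(\mu+\rho)}$), not ``the elementary spherical function evaluated at the identity'': the normalization $\varphi_\mu(e)=1$ presupposes the existence of the spherical vector you are trying to construct. The nonvanishing of this integral is precisely the content of Helgason's Theorem V.4.1, proved there by rank-one reduction and explicit computation, and your closing sentence correctly concedes this. (The final assertion $\mu^*=-w_o\mu$ is also slightly glossed: the longest element of the full Weyl group need not normalize $\fa$; it is cleaner to note that $\pi^*$ is spherical, so its highest weight lies in $\fa^*$ and must equal minus the lowest restricted weight $w_o\mu$ of $\pi$.) With these two steps repaired or explicitly outsourced, your outline is on the same footing as the paper's citation-only proof.
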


\begin{proof} See \cite[Theorem 4.1, p. 535 and Exer. V.10]{H84} for the proof.
\end{proof}
 
If $\mu \in \gL^+$, then $\pi_\mu$ denotes the irreducible spherical representation with highest weight $\mu$.

Denote by $\Psi :=\{\alpha_1,\ldots ,\alpha_r\}$, $r:=\dim_\C \fa$, the set of simple roots in $\Sigma_{0}^+$. Define linear functionals $\omega_{j}\in i\fa^*_o$ by
\begin{equation}\label{fundclass1}
\frac{ \langle \omega_{i},\alpha_{j} \rangle } {\langle \alpha_{j},\alpha_{j} \rangle} = \delta_{i,j} \quad \text{ for }\quad 1 \leqq j \leqq r\ \ .
\end{equation}
Then $\omega_1,\ldots ,\omega_r\in\gL^+$ and
\[\gL^+=\Z^+\omega_1+\ldots  + \Z^+\omega_r= \left\{\left. \sum_{j=1}^r n_j\omega_j\bmid n_j\in \Z^+\right\}\, .\]
The weights $\omega_j$ are called the \textit{spherical fundamental weights for} $(\fg,\fk)$. Set $\Omega:=\{\omega_{1},\ldots ,\omega_{r}\}$.

\subsection{Regular functions}

Let $L$ be one of the groups $U$, $G_o$ and $G$. Let $\bM$ be a manifold and assume that $L$ acts transitively on $\bM$. Then $L$ acts on functions on $\bM$ by $a\cdot f(m)= f(a^{-1}\cdot m)$.  We say that $f\in C (\bM)$ is an $L$-\textit{regular function} if $\{a\cdot f\mid a\in L\}$ spans a finite dimensional space which we will denote by $\langle L\cdot f\rangle$. We denote by $\C_L[\bM ]$ the space of $L$-regular functions on $\bM$. Coming back to our usual notation we remark that the restriction map defines a $G_o$-isomorphism  $\C_G[\bZ]\to \C_{G_o}[\bX]$ and a $U$-isomorphism $\C_G[ \bZ]\to \C_U[\bY]$. Similarly, restriction defines a $G_o$-isomorphism $\C_G[\Xi]\to \C_{G_o}[\Xi_o]$. As soon as the acting group is clear from the context we will omit it from the notation.

We will mostly consider regular functions on the two complex spaces
$\bZ$ and $\Xi$. If needed, we will use results only stated or proved for the complex case also for the real cases using the above restriction maps.

For $\mu\in\gL^+$ we denote by $\C[\bZ]_\mu$, respectively $\C[\Xi]_\mu$, the space of regular functions on $\bZ$, respectively $\Xi$, of type $\pi_\mu$. We recall the following well known fact (cf. \cite{HPV02}):

\begin{lemma}\label{le:decC} The action of $G$ on $\C[\bX]_\mu$ and $\C[\Xi]_\mu$ is irreducible. As a $G$-module we have
\[\C [\bZ]=\bigoplus_{\mu\in\gL^+}\C [\bZ]_\mu\quad \text{and}\quad \C [\Xi]=\bigoplus_{\mu\in\gL^+}\C [\Xi]_\mu\, .\]
Each representation $\pi_\mu$ occurs with multiplicity one in each of those modules.
\end{lemma}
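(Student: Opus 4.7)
The plan is to apply the algebraic form of Frobenius reciprocity to the homogeneous spaces $\bZ=G/K$ and $\Xi=G/MN$. The two cases are formally identical, so I would set up the argument for $\bZ$ and then indicate the changes for $\Xi$.

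For $\bZ$, first observe that $\C[\bZ]$ is completely reducible: every $f\in\C[\bZ]$ sits by definition in the finite-dimensional $G$-invariant subspace $\langle G\cdot f\rangle$, which splits into irreducibles since $\fg$ is semisimple. So $\C[\bZ]$ is a direct sum of its $G$-isotypic components, and it remains to identify which types occur and with what multiplicity. Given a nonzero irreducible $G$-submodule $V\subset\C[\bZ]$ of type $\pi$, the evaluation map $\mathrm{ev}_{x_o}:V\to\C$, $f\mapsto f(x_o)$, is $K$-equivariant (with trivial action on $\C$) and does not vanish identically: if $\mathrm{ev}_{x_o}|_V=0$, then $f(g\cdot x_o)=(g^{-1}\cdot f)(x_o)=0$ for every $f\in V$ and $g\in G$ by $G$-invariance of $V$, forcing $f=0$. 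Thus $V_\pi^*$ carries a nonzero $K$-fixed vector, so $\pi^*$, and hence $\pi$, is spherical by the theorem preceding Theorem \ref{t-CH}, and $\pi\cong\pi_\mu$ for some $\mu\in\gL^+$. Conversely, any $K$-fixed $\nu\in V_{\pi_\mu}^*$ produces the matrix-coefficient embedding $v\mapsto\bigl(gK\mapsto\du{\pi_\mu(g^{-1})v}{\nu}\bigr)$ of $V_{\pi_\mu}$ into $\C[\bZ]$; this establishes Frobenius reciprocity
\[\Hom_G(V_{\pi_\mu},\C[\bZ])\;\cong\;(V_{\pi_\mu}^*)^K.\]
The right-hand side has dimension one by the same theorem, so $\pi_\mu$ occurs in $\C[\bZ]$ with multiplicity one, which makes $\C[\bZ]_\mu$ irreducible and yields the decomposition $\C [\bZ]=\bigoplus_\mu\C[\bZ]_\mu$.

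For $\Xi=G/MN$ the identical argument applies, with $K$ replaced by $MN$ and $x_o$ replaced by $\xi_o$. The spherical-conical equivalence in the same theorem gives $\dim(V_{\pi_\mu}^*)^{MN}=1$, so the Frobenius reciprocity count again yields multiplicity one and the decomposition $\C[\Xi]=\bigoplus_\mu\C[\Xi]_\mu$. I do not expect a genuine obstacle in this proof: all substantive content sits in the preceding spherical/conical theorem, and the lemma is essentially its packaging through Frobenius reciprocity. The only point that warrants mild care is the nonvanishing of the evaluation functional at $\xi_o$ on an irreducible submodule of $\C[\Xi]$, handled exactly as for $\bZ$ using the transitive $G$-action on $\Xi$.
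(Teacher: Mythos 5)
Your proof is correct, and it is the standard Frobenius-reciprocity argument (algebraic Peter--Weyl for $G/K$ and $G/MN$, reducing everything to $\dim(V_{\pi_\mu}^*)^K=\dim(V_{\pi_\mu}^*)^{MN}=1$ from the spherical/conical theorem). The paper itself gives no proof of this lemma --- it simply recalls it as a well-known fact with a reference to [HPV02] --- and your argument is exactly the one that citation stands for.
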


Let $f\in \C [\bX]_\mu$ be a highest weight vector. Recall that $KAN\subset G$ is open and dense. Let $kan\in KAN$. Then
\[(kan)\cdot f(x_o)= f(n^{-1}a^{-1}k^{-1}\cdot x_o) = a^{\mu}f(x_o)\]
where $a^\mu = e^{\du{\mu}{\log a}}$. Hence $f(x_o)\not= 0$. We denote by $f_\mu$\label{fmu1} the unique highest weight vector in $\C[\bX]_\mu$ with $f_\mu (e)=1$.

Let $s_o\in N_{K_o}(\fa_o)$ be a representative of the longest Weyl group element $w_o$ and recall that $Ns_oMAN$ is open and dense in $G$. Let $\psi \in \C[\Xi]_\mu$ be a highest weight vector. Then for $ns_o man_1\in Ns_oMAN$ we have
\[(ns_oman_1)\cdot \psi (\xi_o)= \psi (n_1^{-1}a^{-1}m^{-1}s^{-1}_on^{-1}\cdot \xi_o) =a^{\mu}\psi (s^{-1}_o\cdot \xi_o)\, .\]
Hence $\psi (s^{-1}_o\cdot \xi_o)\not= 0$.  Note that $s_o^2\in M$. As $\psi$ is $M$-invariant it follows that $\psi (s_o^{-1}\cdot \xi_o)=\psi (s_o \cdot \xi)$. Let $\psi_\mu $ be the unique highest weight vector in $\C[ \Xi]_\mu$ with $\psi_{\mu}(s_o\cdot \xi_o)=1$. According to Lemma~\ref{le:decC} there is a unique $G$-intertwining operator $\Gamma :\C[\bZ]\to \C[\Xi]$ such that $\Gamma (f_\mu )=\psi_\mu$ for all $\mu\in \gL^+$. For reasons which will become clear in section \ref{se:RT}, we call $\Gamma$ the \emph{normalized Radon transform}\label{Gamma-def} and note that its inverse $\Gamma^{-1} : \C[\Xi]\to \C[\bZ]$ is the unique $G$-isomorphism such that $\psi_\mu\mapsto f_\mu$ for all $\mu\in\gL^+$. Let
$\Gamma_\mu=\Gamma|_{\C[\bZ]_\mu}$. Then $\Gamma_\mu^{-1}=\Gamma^{-1}|_{\C[\Xi ]_\mu}$.

The maps $\Gamma_\mu$ and $\Gamma^{-1}_\mu$ have a simple description in terms of
the representation $(\pi_\mu,V_\mu)$. Fix for all $\nu\in\gL^+$ a $K$-fixed vector $e_{\nu}\in V_\nu$ and a highest weight vector $u_{\nu}$ in $V_{\nu}$. Further, choose the highest weight vector  $u_{\nu}^*\in V_{\nu}^*$ and the spherical vector  $e_\mu^*\in V_{\nu}^*$ according
to the normalization
$$\du{u_\nu}{\pi_{\nu}^*(s_o)u_{\nu}^*}=1\quad\text{and}\quad \du{u_\mu}{e_{\mu}^*}=1\, .$$
Then, for $v\in V_\mu$
\begin{equation}\label{de:fvarphi}
f_{v,\mu} (aK):=\du{v}{\pi_{\mu}^*(a)e_{\mu}^*}\quad \text{and}\quad  \psi_{v,\mu}(aMN) := \du{v}{\pi_{\mu}^*(a)u_{\mu}^*}\, .
\end{equation}
defines regular function $f_{v,\mu}$ on   $\bZ$, respectively $\psi_{v,\mu}$ on  $\Xi$. Furthermore,
\[V_\mu\ni v \mapsto f_{v,\mu}\in \C[\bZ]_\mu \quad \text{ and }\quad V_\mu \ni v \mapsto \psi_{v,\mu}\in \C[\Xi]_\mu\]
are $G$-isomorphisms. Note that  $f_\mu :=f_{u_\mu,\mu}$\label{fmu2} respectively $\psi_{\mu}:=
\psi_{u_\mu,\mu}$ are normalized highest weight vectors.

\section{Limits of symmetric spaces and spherical representations}
\noindent
In this section we introduce the notion of propagation of symmetric spaces and describe the construction of inductive limits of spherical representations from \cite{OW11a,OW11b}. We then recall the main result from \cite{DOW12} about the classification of spherical representations in the case where $U_{\infty}/K_{o\infty}$ has finite rank. 

We start with some facts and notations for limits of topological vector spaces, which will always be assumed to be complex, locally convex and Hausdorff. Similar notations for limits will be used for Lie groups and even sets without further comments. Our standard reference is  Appendix B in \cite{HY00} and the reference therein. 

If $W_1\subset W_2\subset \cdots $ is an injective sequence of vector spaces, then we denote the inclusion maps
$W_j\hookrightarrow W_k$, $k\ge j$, by $\iota_{k,j}$.  Let
\begin{equation}W_\infty:=\varinjlim W_j=\bigcup_{j=1}^\infty W_j
\end{equation}
and denote by $\iota_{\infty, j}$ the canonical inclusion $W_j\hookrightarrow W_\infty$. If each of the spaces $W_j$ is a topological vector space and each of the maps $\iota_{k,j}$ is continuous then a set $U\subseteq W_\infty$ is open in the \textit{inductive limit topology} on $W_\infty$ if and only if $U\cap W_j$ is open for all $j$. Then $W_\infty$ is a (again locally convex and Hausdorff) topological vector space. If $\{W_j\}$ and $\{V_j\}$ are inductive sequences of topological vector spaces and $T_j: W_j\to V_j$ is a family of continuous linear maps such that
\[\iota_{k,j}\circ T_j= T_k\circ \iota_{k,j}\]
where the first inclusion is the one related to the sequence $\{V_j\}$ and the second one
is the one associated to $\{W_j\}$. Then there exists a unique continuous linear map
$T_\infty =\varinjlim T_j : W_\infty \to V_\infty$ such that $\iota_{\infty,j}
\circ T_\infty = T_\infty\circ
\iota_{\infty, j}\circ T_\infty$ for all $j$.

If $W$ is a locally convex Hausdorff complex topological vector space, then $W^*$ will denote the space
of continuous linear maps $W\to \C$.  We provide it with the weak $*$-topology, i.e., the weakest
topology that makes all the maps $W^*\to \C$, $f\mapsto \langle x, f\rangle :=f(x)$, $x\in W$, continuous.
Then $W^*$ is also a locally convex Hausdorff topological vector space.
If $\{W_j\}$ is a inductive sequence of locally convex Hausdorff topological vector spaces then
$\{W_j^*\}$, with the projections $\pr_{j,k} : W_k^*\to W_j^*$, $\pr_{j,k}(\nu)
=\nu|_{W_j}$, $k\ge j$,  is a projective sequence of locally convex
Hausdorff topological vector spaces. Denote the projective limit of those spaces by
$\varprojlim W_j^*=W_\infty^*$. This notation is justified by the fact
that the topological dual of $W_\infty$ is $\varprojlim W_j^*$. We denote by
$\pr_{j,\infty} : W_\infty^* \to W_j$ the restriction map. If $\{W_j\}$ and $\{V_j\}$ are
injective sequences of topological vector spaces and $T_j: W_j\to V_j$ is as above, then
there exists a unique linear map $T_\infty ^*=\varprojlim T_j^* : V_\infty^*\to W_\infty^*$ such that
$\pr_{j,\infty}\circ T_\infty^* = T_j^*\circ \pr_{j,\infty}$ for all $j$. In fact
$T_\infty^*$ is just the adjoint of $T_\infty$.

We finish the subsection with a simple lemma that connects the inductive limit and the projective in case we have a injective sequence of Lie groups $G_j$ and $G_j$-modules $V_j$. This will be used several times later on. We leave the simple proof as an exercise for the reader.

\begin{lemma}\label{le:InjProLim} Let $\{G_j\}$ be an injective
sequence of Lie groups and  let $\{V_j\}$ be a projective sequence of
$G_j$-modules with $G_j$-equivariant projections
$\pr_{j,k}: V_k\to V_j$. Assume that we have $G_j$-equivariant inclusions
$\iota_{k,j}: V_j\to V_k$ making $\{V_j\}$ into an injective
sequence and such that $\pr_{j.k}\circ \iota_{k,j}=\id_{V_j}$.
For $f\in \varinjlim V_j$, fix
$j$ such that $f\in V_j$. Define $\iota_\infty (f):= \{\iota_{k+1,k}(f)\}_{k\ge j}$. Then $\varinjlim V_j$ and $\varprojlim V_j$ are $G_\infty$-modules and  $\iota$ is a well defined $G_\infty$-equivariant embedding $\varinjlim V_j \hookrightarrow \varprojlim V_j$.
\end{lemma}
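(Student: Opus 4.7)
The plan is to verify each claim in turn; the only real work is index bookkeeping.

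First I would equip both limits with a $G_\infty$-module structure. For $\varinjlim V_j$, given $g \in G_\infty$ and $v \in \varinjlim V_j$, choose $N$ large enough that $g \in G_N$ and $v \in V_N$, and define $g \cdot v$ by the $G_N$-action on $V_N$, then pass to the limit. Independence of the choice of $N$ is immediate from the $G_N$-equivariance of the inclusions $\iota_{k,N}$ for $k \geq N$. For $\varprojlim V_j$, given $g \in G_N$ and a compatible sequence $(v_k)$, I would set $(g \cdot v)_k := g \cdot v_k$ for $k \geq N$; this stays compatible because $\pr_{j,k}$ is $G_N$-equivariant for $k \geq j \geq N$, and the components for smaller $k$ are forced by the projections.

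Second, I would check the well-definedness of $\iota_\infty$ and that it lands in the projective limit. Independence of the choice of $j$ with $f \in V_j$ uses $\iota_{k,j'} \circ \iota_{j',j} = \iota_{k,j}$ for $j \leq j' \leq k$: replacing $f$ by its identification $\iota_{j',j}(f) \in V_{j'}$ produces the same sequence above level $j'$. The required compatibility $\pr_{k,l}(\iota_{l,j}(f)) = \iota_{k,j}(f)$ for $l \geq k \geq j$ follows by writing $\iota_{l,j} = \iota_{l,k} \circ \iota_{k,j}$ and invoking the hypothesis
\[
\pr_{k,l} \circ \iota_{l,k} = \id_{V_k}.
\]

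Finally, linearity of $\iota_\infty$ is obvious; injectivity follows because the identity $\pr_{j,k} \circ \iota_{k,j} = \id_{V_j}$ forces each $\iota_{k,j}$ to be injective, so $\iota_\infty(f) = 0$ already implies $\iota_{k,j}(f) = 0$ for all $k \geq j$, hence $f = 0$; and $G_\infty$-equivariance reduces at each level $k \geq N$ to $\iota_{k,N}(g \cdot f) = g \cdot \iota_{k,N}(f)$, which is the hypothesized $G_N$-equivariance of $\iota_{k,N}$. The only mild subtlety lies in setting up the $G_\infty$-action on $\varprojlim V_j$: one must check that representing the same $g \in G_\infty$ in $G_N$ or in $G_{N'}$ with $N' > N$ produces the same action, which is immediate from the fact that the inclusions $G_N \hookrightarrow G_{N'}$ are by hypothesis compatible with the $G_k$-actions on each $V_k$. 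No serious obstacle arises.
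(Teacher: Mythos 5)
Your verification is correct and complete; the paper itself gives no proof (it explicitly leaves this "simple lemma" as an exercise for the reader), and your index bookkeeping — the $G_\infty$-actions on both limits, the compatibility $\pr_{k,l}\circ\iota_{l,k}=\id_{V_k}$ guaranteeing that $\iota_\infty(f)$ lies in the projective limit, and injectivity from the same identity — is exactly the routine check the authors intend. No gaps.
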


\subsection{Propagation of symmetric spaces}\label{se:3.1}
\noindent
Assume that $G_1\subseteq G_2\subseteq \ldots \subseteq G_k\subseteq G_{k+1} \subseteq \ldots $ is a sequence of connected, simply connected
classical complex Lie groups as in the last section. In the following an index $k$ (respectively $j$) will always indicate objects related to $G_k$ (respectively $G_j$).
 We assume that $\theta_k|_{G_j}=\theta_j$ and $\sigma_k|_{G_j}=\sigma_j$ for all $j\le k$. Then $K_j=G_j\cap K_k$,
$U_j=G_j\cap U_k$, and $G_{jo}=G_j\cap G_{ko}$, for $k\ge j$.

This gives rise to an increasing sequence $\{\bZ_j=  G_j/K_j\}_{j\geqq 1}$ of simply connected complex
symmetric spaces such that for $k\ge j$ the embedding  $\bZ_j\hookrightarrow \bZ_k$ is a $G_j$-map. We denote this inclusion by $\iota_{k,j}$ and note that $\{\bZ_j\}$ is an injective system.

Similarly we have a sequence of transversal real forms $\bX_j=G_{jo}/K_{jo}$ and $\bY_j=U_j/K_{jo}$. We set
\[G_\infty :=\vin G_j \, ,\quad K_\infty : =\vin K_j\quad\text{and}\quad \bZ_\infty :=\vin \bZ_j=G_\infty /K_\infty\]
and similarly for other groups and symmetric spaces. Recall that, as a set
we have $\displaystyle G_\infty=\bigcup G_j$ but the
inductive limit  comes also with the inductive limit topology and a Lie group structure.  The space $\displaystyle \bZ_\infty =\bigcup \bZ_j$ is a smooth manifold
and the action of $G_\infty$ is smooth. Similar comments are valid for the other groups and the corresponding symmetric spaces.

In the following we will always assume that $k\ge j$ and $m\ge n$. As
$\theta_k|_{G_j}=\theta_j$ it follows that $\fk_k\cap \fg_j=\fk_j$ and
$\fs_k\cap \fg_j=\fs_j$. We choose the sequence $\{ \fa_j\}$ of maximal abelian subspaces of $\fs_j$ such that $\fa_k\cap \fs_j=\fa_j$.
Then $\Sigma_j\subseteq \Sigma_k|_{\fa_j}\setminus \{0\}$.
The ordering in $i\fa^*_o$ is chosen so that $\Sigma_j^+\subseteq \Sigma_k^+|_{\fa_{jo}}\setminus \{0\}$.

In case each $\bX_j$ is irreducible we say that $\bX_k$ \textit{propagates} $\bX_j$ if (i) $\fa_k=\fa_j$, or (ii) we obtain the Dynkin diagram for $\Psi_k$ is obtained from the Dynkin diagram for $\Psi_j$ by only adding simple roots at the left end (so the root $\alpha_1$ stays the same). Note, that usually the Dynkin diagram is labeled so that the first simple root is at the \textit{left} and. We have here reversed that labeling. Then, in particular, $\Psi_{k} =\{\alpha_{k,1},\ldots , \alpha_{k,r_k}\}$ and $\Psi_{j}=\{\alpha_{j,1},\ldots , \alpha_{j,r_j}\}$ are of the same type. Furthermore  $\alpha_{k,s}|_{\fa_j}=\alpha_{j,s}$ for $s=1,\ldots ,r_j$, see \cite{OW11a}. Furthermore, if $s\ge r_j+2$, then $\alpha_{k,s}|_{\fa_j}=0$.

In case of reducible symmetric spaces $\bX_t=\bX_t^1\times \cdots \bX_t^{s_t}$ we say that $\bX_k$ propagates $\bX_n$, $k\ge n$, $s_k\ge s_n$ and we can arrange the irreducible components so that $\bX_k^j$ propagates $\bX_n^j$ for $j=1,\ldots ,s_n$. We say that $\bZ_k$ propagates $\bZ_j$
if $\bZ_k$ propagates $\bZ_j$. {}From now on we will always assume, if nothing else is clearly stated, that the sequence $\{\bZ_j\}$ is so that $\bZ_k$ propagates $\bZ_j$ for $k\ge j$.

\subsection{Inductive limits of spherical representations}\label{sec3}
\noindent
In this section we recall the construction of inductive limits of spherical
representations from \cite{W09} and \cite{OW11b}.

As before we assume that $k\geqq j$ and that $\bZ_k$ propagates $\bZ_j$. Moreover,
from now on we will always assume that the groups $G_j$ are simple.
Denote by $\rr_{j,k}: \fa_k^*\to \fa_j^*$  the projection $\rr_{j,k}(\mu)=\mu|_{\fa_j}$. As shown in
\cite{OW11a,OW11b} we have

\begin{lemma} \label{resmult}
If $k\ge j$, then $\rr_{j,k}(\omega_{k,s})  =\omega_{j,s}$ for $s=1,\ldots ,r_j$.
\end{lemma}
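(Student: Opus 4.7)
The plan is to show that $\rr_{j,k}(\omega_{k,s})$ satisfies the defining equations \eqref{fundclass1} of $\omega_{j,s}$ and then to conclude by the uniqueness of the spherical fundamental weights. The key computational tool will be the adjointness of restriction and orthogonal inclusion with respect to a suitable inner product on $\fa_k^*$.

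Concretely, I would identify $\fa_j^*$ with the subspace of $\fa_k^*$ consisting of linear forms that vanish on $\fa_j^\perp$, the orthogonal complement of $\fa_j$ in $\fa_k$ taken with respect to an invariant form on $\fg_k$. Under this identification, $\rr_{j,k}$ becomes the orthogonal projection $\fa_k^*\to \fa_j^*$ and the canonical inclusion is its adjoint. The intrinsic inner product on $\fa_j^*$ coming from $\fg_j$ is proportional to the restricted inner product from $\fa_k^*$ (both are invariant under the Weyl group $W_j$), and the proportionality factor will cancel in the homogeneous ratio appearing in \eqref{fundclass1}.

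The second ingredient, which I expect to be the main obstacle, is the refinement of the propagation hypothesis that for $l\le r_j$ the simple root $\alpha_{k,l}$ already lies in $\fa_j^*$; equivalently, $\alpha_{k,l}$ vanishes on $\fa_j^\perp$. Combined with $\alpha_{k,l}|_{\fa_j}=\alpha_{j,l}$ this forces $\alpha_{k,l}=\alpha_{j,l}$ under the identification. This refinement goes beyond what is stated just before the lemma and is really a structural consequence of propagation as analyzed in \cite{OW11a,OW11b}: for $l\le r_j$ the root space $\fg_{\alpha_{k,l}}$ sits inside $\fg_j$, so the complementary Cartan directions $\fa_j^\perp$ centralize it, forcing $\alpha_{k,l}|_{\fa_j^\perp}=0$. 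If a clean abstract argument proves elusive, this can be verified by hand through the four classical families.

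Granting these two ingredients, for $s,l\in\{1,\ldots,r_j\}$ the adjointness identity and \eqref{fundclass1} applied to $\omega_{k,s}$ give
\[
\langle \rr_{j,k}(\omega_{k,s}),\alpha_{j,l}\rangle_{\fa_j^*} \;=\; \langle \omega_{k,s},\alpha_{k,l}\rangle_{\fa_k^*} \;=\; \delta_{s,l}\,\langle \alpha_{k,l},\alpha_{k,l}\rangle_{\fa_k^*}.
\]
Dividing by $\langle \alpha_{j,l},\alpha_{j,l}\rangle_{\fa_j^*}$ the proportionality between the two inner products cancels, and the ratio equals $\delta_{s,l}$. This is precisely \eqref{fundclass1} for $\rr_{j,k}(\omega_{k,s})$, so uniqueness of the spherical fundamental weights forces $\rr_{j,k}(\omega_{k,s})=\omega_{j,s}$.
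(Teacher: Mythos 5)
First, a remark on the comparison itself: the paper does not prove this lemma --- it is quoted from \cite{OW11a,OW11b} (``As shown in \dots we have''), so there is no in-paper argument to measure yours against. Your reduction is the natural one, and the bookkeeping around it is correct: identifying $\rr_{j,k}$ with the orthogonal projection dual to the inclusion $\fa_j\hookrightarrow\fa_k$, using adjointness, and observing that the ratio in \eqref{fundclass1} is invariant under rescaling the inner product (so the proportionality constant between the $\fg_j$- and $\fg_k$-invariant forms on $\fa_j$ is harmless) are all fine.

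The gap is exactly where you suspect it is, but it is larger than your sketch suggests. The claim that $\alpha_{k,l}$ vanishes on $\fa_j^{\perp}$ for $l\le r_j$ carries essentially all the content of the lemma, and the ``abstract'' justification you offer rests on two unproved assertions: (a) $\fg_{k,\alpha_{k,l}}\cap\fg_j\neq\{0\}$, and (b) $\fa_j^{\perp}$ centralizes $\fg_j$, i.e.\ $\fa_k=\fa_j\oplus\fz_{\fa_k}(\fg_j)$. Neither follows formally from the definition of propagation in Section~\ref{se:3.1}, which only prescribes the Dynkin diagrams and the restrictions $\alpha_{k,s}|_{\fa_j}$. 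Point (b) is not a consequence of $[\fa_j^{\perp},\fa_j]=0$; and even granting (b), so that $\fg_j$ is $\mathrm{ad}(\fa_k)$-stable and every $\fa_k$-root occurring in $\fg_j$ kills $\fa_j^{\perp}$, you still must rule out that the root occurring in $\fg_j$ over $\alpha_{j,l}$ is some $\gamma\in\Sigma_k$ with $\gamma|_{\fa_j}=\alpha_{j,l}$ but $\gamma\neq\alpha_{k,l}$ --- which is (a) again. In \cite{OW11a} these facts come from the explicit matrix realizations of the propagation embeddings for each classical family, where the new $\fa$-directions are supported on the new matrix coordinates and (a), (b) are visible by inspection; in other words, the case-by-case check you relegate to a fallback is in effect the proof. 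The proposal is therefore a correct strategy with its decisive step deferred; to complete it you should either carry out that verification or cite it from \cite{OW11a}.
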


This implies that the sets of highest weights $\Lambda^+_k:=\Lambda^+ (G_k,K_k)$ form a projective system with restrictions as projections. But those sets also form an injective system as we will now describe.  This will allow us to construct an injective system of representations in an unique way, starting at a given level $j_o$.

Let $\mu_j\in\gL_j^+$ and write
\[\mu_j=\sum_{s=1}^{r_j} k_s\omega_{j,s}\, , \qquad k_{s}\in \N_0\, .\]
Define $\mu_k\in \gL^+_k$ by
\[\mu_k:=\sum_{s=1}^{r_j}k_s \omega_{k,s}\, .\]
The map $\iota_{k,j}: \gL^+_j\to \gL^+_k$, $\mu_j\mapsto \mu_k$ is well defined and injective $\iota_{k,n}\circ\iota_{n,j}=\iota_{k,j}$ for $j\le n\le k$. We also have
\begin{equation}\label{eq:rr}
\rr_{j,k}\circ \iota_{k,j}=\id\, .
\end{equation}
Finally, $\mu_k $ is the minimal element in $\rr_{j,k}^{-1}(\mu_{j})$ with respect to the partial ordering $\nu - \mu =\sum_j k_j \omega_{k,j}$,  $k_j\in \Z^+$.
In particular we have the following lemma:
\begin{lemma}\label{le:iSw} The sequence $\{\gL^+_j\}_j$ with the maps
\[\iota_{k,j}: \gL_j^+\to \gL^+_k\, ,\quad \sum_{s=1}^{r_j}k_s\omega^j_s
\mapsto \sum_{s=1}^{r_j} k_s \omega^k_s\]
is an injective sequences of sets. Furthermore, there is a canonical inclusion
\[\gL^+_\infty:=\varinjlim \gL^+_j\hookrightarrow \varprojlim \gL_j^+\, .\]
\end{lemma}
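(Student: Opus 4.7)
The plan has two parts. For the first assertion---that $\{\Lambda^+_j\}$ with the maps $\iota_{k,j}$ is an injective sequence---I would verify three things. (i) Well-definedness: since each $\omega^k_s\in\Lambda^+_k$ and $\Lambda^+_k$ is closed under $\N_0$-combinations, any $\sum_{s=1}^{r_j} k_s\omega^k_s$ lies in $\Lambda^+_k$. (ii) Injectivity: by \eqref{fundclass1} the weights $\omega^k_1,\ldots,\omega^k_{r_k}$ are dual to the simple coroots and hence linearly independent, so the coefficient tuple is recoverable from its image. (iii) Functoriality $\iota_{k,n}\circ\iota_{n,j}=\iota_{k,j}$ for $j\le n\le k$: both sides send $\omega^j_s\mapsto\omega^n_s\mapsto\omega^k_s$ for $s=1,\ldots,r_j$.

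For the canonical inclusion $\Lambda^+_\infty\hookrightarrow \varprojlim\Lambda^+_j$, I would exploit the three-way interaction between the injective system $(\Lambda^+_j,\iota_{k,j})$ just built, the projective system $(\Lambda^+_j,\rr_{j,k})$ asserted earlier in the paper (whose compatibility with $\Lambda^+$ rests on Lemma~\ref{resmult}, together with the vanishing $\alpha_{k,s}|_{\fa_j}=0$ for $s\ge r_j+2$), and the splitting identity $\rr_{j,k}\circ\iota_{k,j}=\id$ of \eqref{eq:rr}. Given $\mu_\infty\in\varinjlim\Lambda^+_j$ with representative $\mu_{j_0}\in\Lambda^+_{j_0}$, I would first extend to a coherent family $\mu_k:=\iota_{k,j_0}(\mu_{j_0})$ for $k\ge j_0$, and then define $\lambda_j:=\rr_{j,k}(\mu_k)$ for any $k\ge\max(j,j_0)$.

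Independence of the choice of $k$ comes from checking, for $k\le k'$, that $\rr_{k,k'}\circ\iota_{k',k}=\id$ (which follows by the same basis computation as functoriality above, combined with \eqref{eq:rr}); this gives $\rr_{j,k'}(\mu_{k'})=\rr_{j,k}(\rr_{k,k'}(\iota_{k',k}(\mu_k)))=\rr_{j,k}(\mu_k)$. The projective compatibility $\rr_{j,j'}(\lambda_{j'})=\lambda_j$ for $j\le j'$ is then immediate from transitivity of $\rr$. Setting $\iota_\infty(\mu_\infty):=(\lambda_j)_j$ therefore produces a well-defined element of $\varprojlim\Lambda^+_j$; independence of the representative $\mu_{j_0}$ is yet another application of \eqref{eq:rr}, and injectivity of $\iota_\infty$ is transparent, since $(\lambda_j)_j$ determines $\mu_{j_0}=\lambda_{j_0}$ at any level $j_0$ containing a representative.

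The argument is essentially Lemma~\ref{le:InjProLim} transported from $G_j$-modules to the sets $\Lambda^+_j$ of dominant spherical weights, and I expect no conceptual obstacle. The only mild technicality is the index bookkeeping for the three regimes $j_0\le j\le k$, $j\le j_0\le k$, and $j\le k\le j_0$, all of which collapse uniformly onto \eqref{eq:rr}.
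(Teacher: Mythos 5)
Your proposal is correct and takes essentially the same route as the paper: the first assertion is the elementary verification the paper leaves implicit, and the inclusion is built exactly as in Lemma~\ref{le:InjProLim} by using the splitting identity \eqref{eq:rr} to turn the coherent family $\mu_k=\iota_{k,j_0}(\mu_{j_0})$ into an element of the projective limit. The paper's own proof is just a terser version of this (it declares $(\mu_j,\mu_{j+1},\ldots)$ coherent by \eqref{eq:rr}); your additional bookkeeping for indices below $j_0$ only fills in detail the paper glosses over.
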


\begin{proof} Most of the proof has been given already. For the last statement the idea is the same as in
Lemma \ref{le:InjProLim}. Given $j$ and $\mu_j\in \gL^+$. Then, by (\ref{eq:rr}) the sequence
$(\mu_j,\mu_{j+1},\ldots )$ is in $\varprojlim \gL^+_j$.
\end{proof}

For $j\in\N$ and $\mu=\mu_{j}\in \gL^+_{j}$ define $\mu_k = \iota_{k,j}(\mu)$, $k\ge j$,
and $\mu_\infty =\varinjlim \mu_j\in\gL^+_\infty$. Let $(\pi_{\mu_j}, V_{\mu_j})$ be the spherical representation
of $G_j$ with highest weight $\mu_j$. We can and will assume that each $V_{\mu_j}$ carries a $U_j$-invariant
inner product such that the embeddings $V_{\mu_j}\hookrightarrow V_{\mu_k}$ are isometric.

\begin{theorem}[\'O-W]\label{le:isr}
Assume that $\bZ_k$ propagates $\bZ_j$. Let $\mu_j\in\gL^+_j$ and define $\mu_k\in \gL^+_k$ as above. Then the following holds:
\begin{enumerate}
\item Let $u_{\mu_k}\in V_{\mu_k}$ be a weight vector
chosen as before, let $W_j:=\, <\pi_k (G_j)u_{\mu_k}>$ and  $\pi_{k,j}(g):=\pi_k(g)|_{W_j}$, $g\in G_j$. Then
$\pi_{k,j}$ is equivalent to $\pi_{\mu_j}$ and we can choose the highest weight
vector $u_{\mu_j}$ in $V_{\mu_j}$ so that the linear map generated by $\pi_{\mu_j}(g)u_{\mu_j}\mapsto \pi_{\mu_k}(g)u_{\mu_k}$ is a unitary $G_j$-isomorphism.
\item The multiplicity of $\pi_{\mu_j}$ in $\pi_{\mu_k}$ is one.
\end{enumerate}
\end{theorem}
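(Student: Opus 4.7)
The statement divides into two parts. For part (1), which identifies $W_j$ with $V_{\mu_j}$, my plan is to show directly that $u_{\mu_k}$ is a highest weight vector of weight $\mu_j$ for the $\fg_j$-action. Because the ordering on $i\fa_o^*$ is chosen so that $\Sigma_j^+\subseteq\Sigma_k^+|_{\fa_{jo}}\setminus\{0\}$, any $X\in\fg_{j,\alpha}$ with $\alpha\in\Sigma_j^+$ decomposes under $\fa_k$ into components lying in $\fg_{k,\beta}$ with $\beta|_{\fa_j}=\alpha>0$, forcing $\beta\in\Sigma_k^+$. Hence $\fn_j\subseteq\fn_k$ and therefore $\fn_j\cdot u_{\mu_k}=0$. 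Lemma~\ref{resmult} combined with the construction of $\mu_k$ as a $\Z^+$-combination of $\omega_{k,s}$ for $s\le r_j$ gives $\mu_k|_{\fa_j}=\mu_j$, so for $H\in\fa_j$ we have $H\cdot u_{\mu_k}=\mu_j(H)u_{\mu_k}$. Classical highest weight theory then implies that $W_j=U(\fg_j)u_{\mu_k}$ is irreducible of type $\pi_{\mu_j}$.

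For the unitarity claim, the $U_k$-invariant inner product on $V_{\mu_k}$ restricts to a $U_j$-invariant Hermitian form on $W_j$. Schur combined with the irreducibility of $\pi_{\mu_j}$ makes $U_j$-invariant inner products on $V_{\mu_j}$ unique up to a positive scalar, so choosing $u_{\mu_j}\in V_{\mu_j}$ a highest weight vector with $\|u_{\mu_j}\|=\|u_{\mu_k}\|$ produces a unique $\fg_j$-intertwiner $V_{\mu_j}\to W_j$ taking $u_{\mu_j}$ to $u_{\mu_k}$ that is automatically an isometry, completing part (1).

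For part (2), the branching multiplicity of $\pi_{\mu_j}$ in $\pi_{\mu_k}|_{G_j}$ equals
\[M:=\dim\bigl\{v\in V_{\mu_k}\,\bigm|\,\fn_j\cdot v=0,\ H\cdot v=\mu_j(H)v\ \text{for all } H\in\fa_j\bigr\},\]
and the vector $u_{\mu_k}$ already shows $M\ge 1$. To prove $M=1$ I would decompose any such $v$ into $\fa_k$-weight pieces $v=\sum_\lambda v_\lambda$, whose weights $\lambda$ all satisfy $\lambda|_{\fa_j}=\mu_j$. Writing $\mu_k-\lambda=\sum_{s=1}^{r_k}n_s\alpha_{k,s}$ with $n_s\in\Z^+$ and restricting to $\fa_j$, the linear independence of $\{\alpha_{j,s}\}_{s=1}^{r_j}$ together with $\alpha_{k,s}|_{\fa_j}=\alpha_{j,s}$ for $s\le r_j$ and $\alpha_{k,s}|_{\fa_j}=0$ for $s\ge r_j+2$ forces $n_s=0$ for $s\le r_j$, confining the admissible $\lambda$ to $\mu_k$ together with descendants by the added simple roots $\alpha_{k,s}$ with $s>r_j$ whose $\fa_j$-restriction vanishes.

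The main obstacle is ruling out the lower-weight contributions. My plan is to argue by downward induction on $\lambda$ in the $\fa_k$-dominance order: taking a maximal $\lambda<\mu_k$ that occurs and isolating $v_\lambda$, the condition $\fn_j\cdot v=0$ together with the maximality of $\lambda$ and the matching of $\fa_k$-weights on both sides forces $\fn_j\cdot v_\lambda=0$ on the nose. Then $v_\lambda$ would be a $\fg_j$-highest weight vector of $\fa_j$-weight $\mu_j$ sitting in $V_{\mu_k}$ at an $\fa_k$-weight strictly below $\mu_k$, which contradicts the minimality of $\mu_k$ in $\rr_{j,k}^{-1}(\mu_j)\cap\gL_k^+$ established in the discussion preceding Lemma~\ref{le:iSw}. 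The delicate point requiring careful bookkeeping is the role of the bridging root $\alpha_{k,r_j+1}$, whose restriction to $\fa_j$ can couple the old and added subsystems; I expect this to be handled via the explicit propagation data of the classical series, where standard branching laws (Gelfand--Tsetlin for type $A$ and the Zhelobenko analogues for $B$, $C$, $D$, $BC$) produce the multiplicity one directly, as already exploited in \cite{OW11a,OW11b}.
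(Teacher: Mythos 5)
First, a point of reference: the paper does not prove this theorem at all. It is quoted from \cite{OW11a,OW11b}, and the only argument supplied is the remark that the compact-group version proved there extends to $G_j$ and $G_{jo}$ by holomorphic extension and restriction. So any comparison is between your sketch and the proofs in those references, which rest on a careful structural (partly case-by-case) analysis of propagation.

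Your proposal has genuine gaps at both of its load-bearing points. In part (1), knowing that $\fn_j\cdot u_{\mu_k}=0$ and that $u_{\mu_k}$ has $\fa_j$-weight $\mu_j$ does \emph{not} let you invoke ``classical highest weight theory,'' because $\fa_j\oplus\fn_j$ is not a Borel subalgebra of $\fg_j$: here $\fn_j$ is built from \emph{restricted} roots, and unless $\fm_j=0$ you must also show that $u_{\mu_k}$ is a conical vector, i.e.\ fixed by $M_j$. That is not automatic, since $\fm_j=\fz_{\fk_j}(\fa_j)$ is not contained in $\fm_k\oplus\fn_k$: an element of $\fm_j$ has components in restricted root spaces $\fg_{k,\pm\beta}$ with $\beta|_{\fa_j}=0$, and the negative components $\fg_{k,-\beta}$ (in particular those involving the bridging simple root $\alpha_{k,r_j+1}$) act as lowering operators on $u_{\mu_k}$. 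Without the conical property, the cyclic module $U(\fg_j)u_{\mu_k}$ could a priori contain non-spherical constituents, so neither its irreducibility nor its type follows. In part (2), two things go wrong. The identification of the branching multiplicity with $\dim\{v:\fn_j v=0,\ \fa_j\text{-weight }\mu_j\}$ is only an inequality (the correct count requires $M_j$-invariance as well), which is harmless but symptomatic. More seriously, the proposed contradiction fails: the minimality of $\mu_k$ in $\rr_{j,k}^{-1}(\mu_j)$ is a statement about \emph{dominant} weights in $\gL^+_k$ ordered by the $\omega_{k,s}$, whereas the $\fa_k$-weights $\lambda<\mu_k$ of $V_{\mu_k}$ that your induction produces need not be dominant, so a $\fg_j$-conical vector sitting at such a $\lambda$ does not contradict that minimality. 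You acknowledge this yourself by deferring the bridging-root bookkeeping to ``standard branching laws\dots as already exploited in \cite{OW11a,OW11b}'' --- but that is precisely the content of the theorem being proved, so as written the argument is circular at its crux rather than a proof.
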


\begin{remark} {\rm Note that in \cite{OW11a} the statement was proved
for the compact sequence $\{U_j\}$. But it holds true for the complex
groups $G_j$ by holomorphic extension. It is also true for the
non-compact groups $G_{j_o}$ by holomorphic extension and
then restriction to  $G_{jo}$.}\hfill $\qed$
\end{remark}

The second half of the above theorem implies that, up to a scalar, the only unitary $G_j$-isomorphism
is the one given in part (1). As a consequence we can and will always think of $V_{\mu_j}$ as a
subspace of $V_{\mu_k}$ such that the highest weight vector
$u$ is independent of $j$, i.e., $u_{\mu_j}=u_{\mu_k}$ for all $k$ and $j$. We form the inductive limit
\begin{equation}\label{de:Vinfty}
V_{\mu_{\infty}}:=\varinjlim V_{\mu_j}\, .
\end{equation}
Starting at a point $j_o$ the highest weight vector $u_{\mu_j}$, $j\ge j_o$ is constant and contained in
all $V_{\mu_j}$. I particular, $\mu_{\mu_j}\in V_{\mu_{\infty}}$. We also note that
$\{\pi_{\mu_j}(g)\}$, $g\in G_j$, forms a injective sequence of continuous
linear operators, unitary for $g\in U_j$. Hence $(\pi_{\mu_k})_\infty (g) : V_\infty \to V_\infty$ is a well defined continuous map. Similarly for the Lie algebra. We denote those maps by 
$\pi_{\mu_\infty}$ and $d\pi_{\mu_\infty}$ respectively. Hence group $G_\infty$ and
acts continuously, in fact smootly, on $V_{\mu_{\infty}}$.
We denote the corresponding representation of $G_\infty$ by $\pi_{\mu_\infty}$. We have \[ d\pi_{\mu_{\infty}}(H)u_{\mu_\infty}
=\mu_{\infty}(H)u_{\mu_\infty}\quad\text{ for all }\quad H\in\fa_\infty \,. \]
The representation $(\mu_{\mu_\infty},V_{\mu_\infty})$ is (algebraically) irreducible.
We can make $\pi_{\mu_\infty}|_{U_\infty}$ unitary by completing $V_{\mu_\infty}$ to an Hilbert space $\hat{V}_\infty$ as is usually done, see \cite{DOW12}.

The dual of $V_{\mu_\infty}$ is given by the corresponding projective limit
\begin{equation}\label{de:VinftyDual}
V_{\mu_\infty}^*=\varprojlim V_{\mu_j}^*\, .
\end{equation}
Note that in this notation $V_{\mu_\infty}^*\not=V_{\mu_\infty^*}$. We note that the highest weight vector, which we now denote by $u_{\mu_\infty}$
in $V_{\mu_\infty}$.
If $g\in G_k$, $k\ge j$, then $\pi_{\mu_j}^*(g)$ forms a projective family of
operators and hence $\varprojlim \pi_{\mu_k}^*$ is a well defined continuous
representation of $G_\infty$ on $V_{\mu_\infty}^*$. We
denote this representation by $\pi_{\mu_\infty}^*$. 

\begin{lemma}\label{le:PrlimKin}
Let the notation be as above. Then
\[\dim \left(\varprojlim V_{\mu_j}^{*}\right)^{K_\infty}=1\, .\]
\end{lemma}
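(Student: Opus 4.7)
The plan is to reduce the question about $K_\infty$-invariance on the projective limit to the collection of one-dimensional $K_j$-fixed subspaces of the factors $V_{\mu_j}^*$, and then check that the canonical $K_j$-fixed vectors are compatible under the projections.

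First I would observe that by Theorem~\ref{t-CH}(2) (combined with the previous theorem on conical/spherical equivalence), each dual representation $\pi_{\mu_j}^*$ is again spherical, so $\dim V_{\mu_j}^{*K_j}=1$. Let $e_{\mu_j}^*\in V_{\mu_j}^*$ denote the unique $K_j$-fixed vector normalized by $\du{u_{\mu_j}}{e_{\mu_j}^*}=1$, where $u_{\mu_j}$ is the highest weight vector fixed throughout the construction (recall from Theorem~\ref{le:isr} that, after identifying $V_{\mu_j}\subseteq V_{\mu_k}$, one has $u_{\mu_j}=u_{\mu_k}$).

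Next I would show that an element $\nu=(\nu_j)\in \varprojlim V_{\mu_j}^*$ is $K_\infty$-fixed if and only if each $\nu_j$ is $K_j$-fixed. The forward direction follows because any $k\in K_j$ lies in $K_\infty$, so $\pi_{\mu_\infty}^*(k)\nu=\nu$ projects under $\pr_{j,\infty}$ to $\pi_{\mu_j}^*(k)\nu_j=\nu_j$. Conversely, given $k\in K_\infty$ there is $j_0$ with $k\in K_{j_0}$, hence $k\in K_j$ for all $j\ge j_0$; if each $\nu_j$ is $K_j$-fixed then $\pi_{\mu_j}^*(k)\nu_j=\nu_j$ for $j\ge j_0$, and since elements of the projective limit are determined by any cofinal tail, $\pi_{\mu_\infty}^*(k)\nu=\nu$. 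Combined with the one-dimensionality noted above, this forces $\nu_j=c_j\, e_{\mu_j}^*$ for scalars $c_j\in\C$.

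The main step is then to verify the compatibility $\pr_{j,k}(e_{\mu_k}^*)=e_{\mu_j}^*$. The map $\pr_{j,k}$ is the adjoint of the $G_j$-equivariant inclusion $V_{\mu_j}\hookrightarrow V_{\mu_k}$, so it is $K_j$-equivariant (since $K_j\subset K_k$). Therefore $\pr_{j,k}(e_{\mu_k}^*)$ lies in $V_{\mu_j}^{*K_j}$, which is one-dimensional, and so equals $\alpha_{j,k}\, e_{\mu_j}^*$ for some scalar. Pairing with $u_{\mu_j}$ gives
\[
\alpha_{j,k}=\du{u_{\mu_j}}{\pr_{j,k}(e_{\mu_k}^*)}=\du{u_{\mu_j}}{e_{\mu_k}^*}=\du{u_{\mu_k}}{e_{\mu_k}^*}=1,
\]
using $u_{\mu_j}=u_{\mu_k}$ and the normalization of $e_{\mu_k}^*$. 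Consequently the compatibility $\pr_{j,k}(\nu_k)=\nu_j$ reads $c_k\, e_{\mu_j}^*=c_j\, e_{\mu_j}^*$, forcing $c_j=c_k$ for all $k\ge j$. Thus $(e_{\mu_j}^*)_j$ defines a well-defined nonzero element $e_{\mu_\infty}^*\in \varprojlim V_{\mu_j}^*$, and every $K_\infty$-fixed vector is a scalar multiple of it, proving the claim. I expect the only delicate point to be the bookkeeping at step two, namely tying the $K_\infty$-invariance of the limit vector to the $K_j$-invariance of each component; once the normalization $u_{\mu_j}=u_{\mu_k}$ is available from Theorem~\ref{le:isr}, the scalar computation in the last display is automatic.
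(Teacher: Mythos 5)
Your proposal is correct and follows essentially the same route as the paper: both arguments rest on $\dim V_{\mu_j}^{*K_j}=1$, the observation that a $K_\infty$-fixed element of the projective limit has $K_j$-fixed components, and a compatible choice of the vectors $e_{\mu_j}^*$ under the projections $\pr_{j,k}$. Your pairing computation with $u_{\mu_j}=u_{\mu_k}$ just makes explicit the normalization (and the nonvanishing of $\pr_{j,k}(e_{\mu_k}^*)$) that the paper imposes directly by requiring $\pr_{j_o,j}(e_{\mu_j}^*)=e_{\mu_{j_o}}^*$.
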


\begin{proof} First fix $j_o$ so that $V_{\mu_j}$ is defined for
all $j\ge j_o$, i.e. $\{\mu_j\}$ stabilizes from $j_o$ on. As $\dim V_{\mu_j}^{*\, K_{j}}=1$, $j\ge j_o$, there exists a unique, up to scalar, $K_{j_o}$-fixed element $e_{\mu_{j_o}}^*$. We fix $e^*_{\mu_j}$ now so that $\pr_{j_o,j}(e_{\mu_j}^*) = e_{\mu_{j_o}^*}$, where $\pr_{j_0,j}$ is the dual map of $V_{\mu_{j_o}}\hookrightarrow V_{\mu_j}$. Then $e_{\mu_\infty}^*:=\{e_{\mu_j}^*\}_{j\ge j_o}\in  \left(\varprojlim V_{\mu_j}^*\right)^{K_\infty}$. On the other hand, if $\{e_{\mu_j}^*\}_{j\ge j_o}\in \left(  \varprojlim V_{\mu_j}^{*}\right)^{ K_\infty}$, then $e_{\mu_{j_o}}^*\in V_{\mu_{j_o}}^{*\, K_j}$ is unique up to scalar showing that the dimension is one.
\end{proof}

{}From now on we fix $e_{\mu_\infty}^*$ so that $\du{u_{\mu_\infty}}{e_{\mu_\infty}^*}=1$.

\begin{theorem} $V_{\mu_\infty}^*$ is irreducible.
\end{theorem}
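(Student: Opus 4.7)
My plan is to exploit the duality between the inductive limit $V_{\mu_\infty}=\varinjlim V_{\mu_j}$ and the projective limit $V_{\mu_\infty}^* = \varprojlim V_{\mu_j}^*$, combined with the algebraic irreducibility of $(\pi_{\mu_\infty},V_{\mu_\infty})$ stated just before Lemma \ref{le:PrlimKin}. I read the theorem as the topological statement that no proper nonzero closed $G_\infty$-invariant subspace of $V_{\mu_\infty}^*$ exists; otherwise the assertion would be false in general for infinite-dimensional modules.

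First I would fix a nonzero closed $G_\infty$-invariant subspace $W\subseteq V_{\mu_\infty}^*$ and form its annihilator
\[W^\perp := \{v\in V_{\mu_\infty} \bmid \du{v}{\nu}=0 \text{ for all } \nu\in W\}\, .\]
A one-line check using $\du{\pi_{\mu_\infty}(g)v}{\nu}=\du{v}{\pi_{\mu_\infty}^*(g^{-1})\nu}$ and the $G_\infty$-stability of $W$ shows that $W^\perp$ is a $G_\infty$-invariant linear subspace of $V_{\mu_\infty}$.

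Next I would invoke the algebraic irreducibility of $V_{\mu_\infty}$ to conclude that $W^\perp$ is either $\{0\}$ or all of $V_{\mu_\infty}$. If $W^\perp = V_{\mu_\infty}$, non-degeneracy of the canonical pairing forces $W=\{0\}$, contradicting the assumption that $W$ is nonzero. Hence $W^\perp=\{0\}$. By the bipolar theorem for the weak-$*$ topology, which applies once one recognizes that the continuous dual of $V_{\mu_\infty}^*$ is $V_{\mu_\infty}$ (this identification is set up in the preliminary discussion of inductive and projective limits at the beginning of Section \ref{sec3}), one has $W = (W^\perp)^\perp = V_{\mu_\infty}^*$, finishing the proof.

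The main obstacle is not representation-theoretic but purely topological bookkeeping: one must verify that the topology placed on $V_{\mu_\infty}^* = \varprojlim V_{\mu_j}^*$ coincides with the weak-$*$ topology induced by its duality with $V_{\mu_\infty}$, so that the bipolar identity $(W^\perp)^\perp = \overline{W}$ applies with $V_{\mu_\infty}$ serving as the full continuous dual. Once that compatibility is invoked from the preparatory material, the argument proceeds without any new input beyond the already-established algebraic irreducibility of $V_{\mu_\infty}$; in particular the uniqueness statement of Lemma \ref{le:PrlimKin} is not needed for this proof, though it serves as a consistency check since a nontrivial closed invariant subspace would a priori produce a second $K_\infty$-fixed line.
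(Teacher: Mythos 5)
Your argument is correct and is essentially the paper's own proof: the paper likewise passes to the annihilator $W^\perp\subseteq V_{\mu_\infty}$ of a closed invariant subspace $W$, applies the algebraic irreducibility of $V_{\mu_\infty}$ to force $W^\perp=\{0\}$, and recovers $W=V_{\mu_\infty}^*$ from the duality (the paper says ``since all spaces involved are reflexive,'' which is exactly the bipolar/weak-$*$ duality point you spell out more carefully). Your version is slightly more explicit about why the continuous dual of $V_{\mu_\infty}^*$ is $V_{\mu_\infty}$, but there is no difference in substance.
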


\begin{proof} Assume that $W\subset V_{\mu_\infty}^*$ is a closed $G_\infty$-invariant subspace. Then $W^\perp =\{u\in V_{\mu_\infty}\mid (\forall \varphi \in W)\, \du{u}{\varphi}=0\}$ is closed and $G_\infty$ invariant. Hence $W^\perp=\{0\}$ or  $W=V_\infty$, and since all spaces involved are reflexive, this implies that $W=V_\infty^*$ or $W=\{0\}$.
\end{proof}

The vector $u_{\mu_\infty}  \in V_{\mu_\infty}$ is clearly $M_\infty N_\infty$-invariant. Therefore $V_{\mu_\infty}$ is conical (see \cite{DO13}).
But it is easy to see that with exception of some trivial
cases (as $G_j=G_k$ for all $j$ and $k$, which we do not consider) the representation
$(\pi_{\mu_\infty},V_{\mu_\infty})$ is not $K_\infty$-spherical. In fact, suppose that
$e\in V_{\mu_\infty}$ is a non-trivial, $K_\infty$-invariant vector. Let $j$ be so that $e\in V_{\mu_j}$. Then $e$ has to be fixed for all $K_s$, $s\ge j$ and hence a multiple of $e_s$. This is impossible in general as will follow from Lemma \ref{le:cMu}. On the other hand it was shown in \cite{DOW12} that the Hilbert space completion $\hat{V}_{\mu_\infty}$ is $K_\infty$-spherical if and only if the dimension of $\fa_\infty$ is finite. In this case we can assume that   $\fa_j=\fa_\infty$ for all $j$. Then
$\Sigma_j=\Sigma_k=\Sigma_\infty$, $\Sigma^+_j=\Sigma_k^+=\Sigma_\infty^+$ and
$\gL^+_j=\gL^+_k=\gL_\infty^+$ for $k\ge j$. But we still use the notation $\mu_j$ etc. to indicate what group we are using.

\begin{theorem}[\cite{DOW12}]\label{th:Ksph}  Let the notation be as above and assume that $\mu\not= 0$. Then $\hat V_\infty^{K_\infty}\not=\{0\}$ if and only if the ranks of the compact Riemannian symmetric spaces $\bX_k$ are bounded. Thus, in the case where $\bX_j$ is an irreducible classical symmetric space, we have $V_\infty^{K_\infty}\not=\{0\}$ only for $\SO(p+\infty)/\SO(p)\times \SO(\infty)$, $\SU(p+\infty)/\mathrm{S}(\rU(p)\times \rU(\infty))$ and
$\Sp(p+\infty)/\Sp(p)\times \Sp(\infty)$ where $0 < p < \infty$.
\end{theorem}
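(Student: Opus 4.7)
The plan is to translate existence of a $K_\infty$-fixed vector in $\hat V_{\mu_\infty}$ into an analytic stability condition on the unit spherical vectors $e_{\mu_j}\in V_{\mu_j}$, and then to verify this condition by an explicit computation in each classical family.

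First, I would exploit the one-dimensionality of $V_{\mu_j}^{K_j}$. Any $K_\infty$-fixed vector $e\in\hat V_{\mu_\infty}$ has orthogonal projection $P_j e=\alpha_j e_{\mu_j}$ onto the closed subspace $V_{\mu_j}$, and since $\bigcup_j V_{\mu_j}$ is dense we have $P_je\to e$, forcing $|\alpha_j|\to\|e\|$. After normalizing and choosing phases, the existence question reduces to whether there exist unit scalars $\alpha_k\in\mathbb T$ making $\{\alpha_k e_{\mu_k}\}$ Cauchy in $\hat V_{\mu_\infty}$. The identity
\[
\|\alpha_k e_{\mu_k}-\alpha_j e_{\mu_j}\|^2=2\bigl(1-\mathrm{Re}(\bar\alpha_j\alpha_k\langle e_{\mu_k},e_{\mu_j}\rangle)\bigr)
\]
reduces this further to the single criterion $|\langle e_{\mu_k},e_{\mu_j}\rangle|\to 1$ as $j,k\to\infty$.

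Second, I would compute the inner products $c_{j,k}:=\langle e_{\mu_k},e_{\mu_j}\rangle_{V_{\mu_k}}$ explicitly. Writing $e_{\mu_j}$ as the (suitably normalized) $K_j$-average of $u_{\mu_j}$ and using that the highest weight vector is shared across levels, these coefficients expand as integrals of zonal spherical functions, which for the classical series reduce to ratios of Weyl dimensions or explicit gamma-function products analogous to the constants appearing in Lemma~\ref{le:cMu}.

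Third, I would execute the classification. The propagation hypothesis allows at each step either $\fa_k=\fa_j$ or the addition of a single simple root at the left end of the Dynkin diagram. Bounded rank of $\bX_k$ is equivalent to the first alternative holding from some level onward; inspection of the irreducible classical diagrams shows this leaves exactly the three families $\SO(p+\infty)/\SO(p)\times\SO(\infty)$, $\SU(p+\infty)/\mathrm{S}(\rU(p)\times\rU(\infty))$ and $\Sp(p+\infty)/\Sp(p)\times\Sp(\infty)$, corresponding to the situation where one factor of $K_j$ is fixed while the other grows. For the bounded-rank direction I would show $|c_{j,k}|\to 1$ using the Olshanski-type limit of the zonal spherical functions on the growing Grassmannians, so that the Cauchy criterion is met. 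For the unbounded-rank direction, $\mu_k$ gains a nontrivial new component at the left end at infinitely many levels, and a direct Weyl-dimension estimate on $c_{j,k}$ would show $|c_{j,k}|$ stays bounded away from $1$, blocking the existence of a limit.

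The main obstacle is the quantitative control of $c_{j,k}$ in the bounded-rank direction: either a careful Olshanski-style analysis of spherical functions on each of the three growing series or an explicit hypergeometric estimate is required, and the computation has to be carried out separately according to the type of $\Sigma_0$. Once those estimates are in place, the classification half is essentially combinatorial from the propagation rule and the reduction in the first paragraph delivers the \emph{if and only if} statement.
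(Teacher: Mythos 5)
You should first be aware that the paper does not prove this theorem at all: it is quoted from \cite{DOW12}, and the only trace of the argument in the present text is the $c$-function material around Lemma \ref{le:cMu} and the statement in Section \ref{se:RInfty} that $\bc_\infty(\mu_\infty)=\lim_j\bc(\mu_j+\rho_j)$ exists and is strictly positive exactly in the finite rank case. Your reduction is sound and is in substance the same as the one in \cite{DOW12}, just phrased through Cauchy sequences of unit spherical vectors rather than through the projection of the highest weight vector. In fact your second step collapses: taking $e_{\mu_j}:=Q_ju_\mu/\|Q_ju_\mu\|$ with $Q_j=\int_{K_j}\pi_{\mu_\infty}(k)\,dk$, the relations $Q_jQ_k=Q_k$ for $k\ge j$ and $Q_ju_\mu\in V_{\mu_j}^{K_j}=\C\, e_{\mu_j}$ give
\[
\du{e_{\mu_k}}{e_{\mu_j}}=\frac{\|Q_ku_\mu\|}{\|Q_ju_\mu\|}=\sqrt{\frac{\bc(\mu_k+\rho_k)}{\bc(\mu_j+\rho_j)}}\,,
\]
a positive real number (using $\|Q_ju_\mu\|^2=\bc(\mu_j+\rho_j)$, cf.\ Lemma \ref{le:cMu}). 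So all phase bookkeeping disappears, and your criterion $|\du{e_{\mu_k}}{e_{\mu_j}}|\to 1$ is literally the statement that the decreasing positive sequence $\bc(\mu_j+\rho_j)$ has a strictly positive limit, i.e.\ the criterion of Theorem 4.7 of \cite{DOW12}.

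Two genuine problems remain. First, the decisive step --- that $\lim_j \bc(\mu_j+\rho_j)>0$ when the ranks are bounded and $=0$ otherwise, via the Gindikin--Karpelevich product --- is exactly what you set aside as ``the main obstacle,'' and it is the entire analytic content of the theorem; without it you have only reformulated the statement. Second, your stated mechanism for the unbounded-rank direction is incorrect: under propagation $\mu_k=\iota_{k,j}(\mu_j)=\sum_{s\le r_j}k_s\omega_{k,s}$ has, by construction, coefficient zero on every newly added fundamental weight, so $\mu_k$ does \emph{not} ``gain a nontrivial new component at the left end.'' What accumulates is the set of positive roots, hence $\rho_k$ and the number of Gindikin--Karpelevich factors, each less than $1$; that, and not a growing highest weight, is what drives $\bc(\mu_j+\rho_j)\to 0$. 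Finally, in the converse direction of your first step, aligning consecutive phases does not by itself yield a Cauchy sequence; this is moot here because the inner products above are positive, but if you keep the abstract formulation you should instead argue that $|\du{e_{\mu_k}}{e_{\mu_j}}|\to1$ forces the rank-one projections onto $\C\, e_{\mu_j}$ to converge in norm to a nonzero projection whose range consists of $K_\infty$-fixed vectors.
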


Let as usually $\iota_{k,j} :V_{\mu_j}\to V_{\mu_k}$ be the inclusion defined in Theorem \ref{le:isr} and
$\pr_{j,k} :V_{\mu_k} \to V_{\mu_j}$ the orthogonal projection. Then, as
$V_{\mu_j}\simeq <\pi_{\mu_k}(G_j)u_{\mu_k}>$, it follows that $\pr_{j,k}\circ \iota_{k,j}=
\id_{V_{\mu_j}}$. By Lemma \ref{le:InjProLim} there is a canonical $G_{\infty}$-inclusion
\[V_{\mu_\infty}\hookrightarrow \varprojlim V_{\mu_j}\, .\]

Define $u^*_{\mu_j}\in V_{\mu_j}^*$ by
\[\du{\pi_{\mu_j} (s_{o,j}) u_{\mu_j}}{u^*_{\mu_j}}=1\quad \text{ and }\quad u_{\mu_j}^*|_{(\pi_\mu (G_j) u_{\mu_j})^\perp_{\mu_j}}=0\]
where $s_{o,j}\in N_{K_j}(\fa_j)$ is so that $\Ad (s_{*,j})$ maps the set of positive roots into the set of negative roots and $(\pi_\mu (G_j) u_{\mu_j})^\perp_{\mu_j}$ is the orthogonal complement in $V_{\mu_j}$. We use the inner product to fix embeddings $V_{\mu_j}^*\hookrightarrow V_{\mu_k}^*$ for $j\le k$. Then again, we can take $u_{\mu_j}^*$ independent of $j$, which defines an $M_\infty N_\infty$-invariant element in $\varinjlim V_{\mu_j}^*\subset \varprojlim V_{\mu_j}^*=V_{\mu_\infty}^*$. As $e_{\mu_\infty}^*$ is $K_\infty$-invariant, it follows that
$V_{\mu_\infty}^*$ is both spherical and conical.

We now give another description of the representations $(\pi_{\mu_\infty},V_{\mu_\infty})$. This material is based on \cite{DO13}. We say that a representation $(\pi,V)$ of $G_\infty$ is \textit{holomorphic} if $\pi|_{G_j}$ is holomorphic for all $j$.

\begin{theorem}[\cite{DO13}]\label{th:DO13}
Assume that $X_\infty$ has finite rank. If $\mu_\infty\in \gL^+_\infty = \gL^+$, then $(\pi_{\mu_\infty},V_{\mu_\infty})$ is irreducible, conical
and holomorphic. Conversely, if $(\pi,V)$ is an irreducible conical and holomorphic representation of $G_\infty$, then there exists a unique $\mu_\infty\in \gL^+_\infty$ such that $(\pi,V)\simeq (\pi_{\mu_\infty},V_{\mu_\infty})$.
\end{theorem}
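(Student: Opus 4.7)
My plan is to handle the two implications separately and to isolate the step where the finite-rank hypothesis is essential.

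For the forward implication, given $\mu_\infty\in\gL^+_\infty$, the representation $(\pi_{\mu_\infty},V_{\mu_\infty})$ constructed in~\eqref{de:Vinfty} is holomorphic because each restriction $\pi_{\mu_\infty}|_{G_j}$ is realized on $V_{\mu_\infty}=\varinjlim V_{\mu_k}$ by the holomorphic operators $\pi_{\mu_k}(g)$, and holomorphy of a representation of $G_\infty$ only needs to be checked one $G_j$ at a time. Conicality is the fact that $u_{\mu_\infty}$ is $M_\infty N_\infty$-invariant, recorded in the paragraph preceding Theorem~\ref{th:Ksph}. Algebraic irreducibility is already stated in the text, but could alternatively be re-derived by duality from irreducibility of $V_{\mu_\infty}^*$, since any nonzero proper $G_\infty$-submodule would be detected by its annihilator.

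For the converse, let $(\pi,V)$ be irreducible, conical and holomorphic, and pick a nonzero $v\in V^{M_\infty N_\infty}$. Set $V_j:=\langle \pi(G_j)\cdot v\rangle$; by holomorphy each $V_j$ is finite-dimensional. Using finite rank, $\fa_\infty$ is finite-dimensional, so I can decompose $v$ inside the finite-dimensional module $V_j$ into $\fa_\infty$-weight components. Because $M_\infty$ centralizes $\fa_\infty$ and $\fn_\infty$ shifts weights by fixed positive roots, each weight component is again $M_\infty N_\infty$-fixed. Replacing $v$ by a nonzero weight component, I arrange that $v$ is an $\fa_\infty$-weight vector of weight $\mu_\infty\in\fa_\infty^*$. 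Then in each $V_j$, $v$ is a cyclic highest weight vector of weight $\mu_j:=\mu_\infty|_{\fa_j}$, which forces $V_j$ to be the finite-dimensional irreducible $G_j$-module of highest weight $\mu_j$; conicality via $v$ together with the spherical--conical equivalence gives $\mu_j\in\gL^+_j$.

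The finite-rank assumption now lets me take $\fa_j=\fa_\infty$ stably, so $\mu_j$ stabilizes to $\mu_\infty\in\gL^+_\infty$. The ambient inclusions $V_j\hookrightarrow V_k$ are $G_j$-equivariant and send $v$ to $v$, so by the multiplicity-one statement Theorem~\ref{le:isr}(2) they coincide, up to a nonzero scalar, with the canonical inclusions used to build $V_{\mu_\infty}$. Consequently $W:=\bigcup_j V_j$ is a nonzero $G_\infty$-invariant subspace of $V$ that is $G_\infty$-isomorphic to $V_{\mu_\infty}$, and irreducibility of $V$ forces $W=V$. Uniqueness of $\mu_\infty$ is immediate since any $G_\infty$-isomorphism preserves the conical line and hence its $\fa_\infty$-weight. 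The hardest step is the compatibility between the ambient embeddings $V_j\hookrightarrow V_k$ and the canonical ones, and this is exactly where finite rank is required: in infinite rank the extracted sequence $\{\mu_j\}$ only lies in $\varprojlim\gL^+_j$, not necessarily in $\varinjlim\gL^+_j=\gL^+_\infty$, so the procedure above cannot recover a single $\mu_\infty$.
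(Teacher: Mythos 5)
First, a point of reference: the paper does not prove this statement at all --- it is quoted from \cite{DO13} (listed as ``in preparation''), so there is no in-paper argument to compare against. Judged on its own terms, your forward direction is fine: holomorphy, conicality of $u_{\mu_\infty}$, and algebraic irreducibility of $V_{\mu_\infty}$ are all either immediate from the construction \eqref{de:Vinfty} or already established in the text, so that half is essentially an assembly job. The skeleton of your converse (extract a conical weight vector, show each $\langle\pi(G_j)v\rangle$ is the irreducible finite-dimensional highest weight module $V_{\mu_j}$ with $\mu_j\in\gL^+_j$ via the spherical--conical equivalence, then use multiplicity one to match the ambient inclusions with the canonical ones) is the natural route and is surely close to what \cite{DO13} does.

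There is, however, one genuine gap, and it sits exactly where the real work is: the assertion that ``by holomorphy each $V_j=\langle\pi(G_j)\cdot v\rangle$ is finite-dimensional.'' The paper's definition of a holomorphic representation of $G_\infty$ only requires $\pi|_{G_j}$ to be holomorphic, and holomorphy of a representation of a complex semisimple group on an infinite-dimensional space (say, in the sense that all matrix coefficients are holomorphic) does \emph{not} imply local finiteness: an algebraic direct sum $\bigoplus_{n}V_{n\mu}$ of $G_j$-irreducibles has holomorphic matrix coefficients, yet a vector with nonzero components in infinitely many summands has infinite-dimensional cyclic span. Ruling this out for an irreducible conical $G_\infty$-module --- i.e., proving local finiteness --- is precisely the content one expects \cite{DO13} to supply (note that Remark \ref{rem:fin-rank} lists ``is locally finite'' as a \emph{conclusion} of that paper, not a hypothesis), so you cannot wave it through. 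Everything downstream of that step in your argument depends on it: the weight decomposition of $v$, the identification of $V_j$ with $V_{\mu_j}$, and the direct-limit comparison. A smaller quibble: the reason the $\fa_\infty$-weight components of $v$ remain $M_\infty N_\infty$-fixed is not that ``$\fn_\infty$ shifts weights by positive roots'' (that argument does not close by itself); the clean justification is that $V^{M_kN_k}$ is $A_k$-invariant because $A_k$ normalizes $M_kN_k$, and $A_k$ acts semisimply on the finite-dimensional $V_k$, so $V_k^{M_kN_k}$ is a sum of weight spaces. Your closing diagnosis of where finite rank enters (the restricted weights $\mu_j$ a priori only form an element of $\varprojlim\gL^+_j$) is a fair heuristic and consistent with the paper's discussion.
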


\section{Regular functions on limit spaces}\label{se:RegInf}
\noindent
In this section we study the spaces $\varinjlim \C[\bZ_j]$ and $\varprojlim \C[\bZ_j]$ as well as their analogs for $\Xi_\infty$.
Our main discussion
centers around the injective limits. We only discuss the limits of the complex cases. The corresponding results  for the algebras 
$\C_i [\bX_\infty]=\varinjlim \C[\bX_j]$ and $\C_i[\bY_\infty]:=\varinjlim \C[\bY_\infty]$ can be derived simply by restricting functions from $\bZ_
\infty$ to the real subspaces $\bX_\infty$ respectively $\bY_\infty$.

\subsection{Regular functions on $\bZ_\infty$}
In this section $\{\bZ_j\}=\{G_j/K_j\}$ is a propagated system of symmetric spaces as before. There are two natural ways to extend the notion of a regular function on finite dimensional symmetric spaces to the inductive limit of those spaces. One is to consider the projective limit $\C [\bZ_\infty]:=\varprojlim \C [\bZ_j]$. The other possible generalization would be to consider the space of functions on $\bZ_\infty$ which are algebraic finite sums of algebraically irreducible $G_\infty$-modules and such that each $f$ is locally finite in the sense that for each $j$ the space $\langle G_j\cdot f\rangle$ is finite dimensional. But as very little is known about those spaces and not all of our previous discussion about the  Radon transform and its dual generalize to those spaces we consider first the space
\[\C_i[\bZ_\infty]:=\varinjlim \C[\bZ_j]\, .\]
That this limit in fact exists will
be shown in a moment.

Let $x_\infty= \{K_\infty\}\in \bZ_\infty$ be the base point
in $\bZ_\infty$. Then all the spaces $\bZ_j$ embeds into $\bZ_\infty$ via
$aK_j\mapsto a\cdot x_\infty$ and in that way $\bZ_\infty =\bigcup \bZ_j$. Recall from our previous discussion and Lemma  \ref{le:iSw} that the sets $\gL^+_j$ of highest spherical weights form an injective system and $\gL^+_\infty=\varinjlim \gL^+_j$.
Each $\mu_\infty=\varinjlim \mu_j$ determines a unique algebraically irreducible (see below for proof) $G_\infty$-module $V_{\mu_\infty}=\varinjlim V_{\mu_k}$ such that the  dual space $V_{\mu_\infty}^*=\varprojlim V_{\mu_j}^*$ contains a (normalized) $K_\infty$-fixed vector $e_{\mu_\infty}^*$ normalized by the condition
$\du{u_{\mu_\infty}}{e_{\mu_\infty}^*}=1$ as after Lemma \ref{le:PrlimKin}.  As before, we
denote the $G_\infty$-representation on $V_{\mu_\infty}^*$ by $\pi_{\mu_\infty}^*$ and
consider the $G_\infty$-map
\[ V_{\mu_\infty}\hookrightarrow \text{ space of continuous functions on } G_\infty \]
given by
\begin{equation}\label{def:fvInf}
w\mapsto f_{w,\mu_\infty}\, ,\quad \text{ where } f_{w,\mu_\infty}(a\cdot x_\infty):=\du{w}{\pi_{\mu_\infty}^*(a)e_{\mu_\infty}^*}\, .
\end{equation}
Denote the image of the map (\ref{def:fvInf}) by $\C [\bZ_\infty]_{\mu_\infty}$. Thus
\begin{equation}\label{def:CZInfty}
\C[\bZ_\infty]_{\mu_\infty}=\{f_{w,\mu_\infty}\mid w\in V_{\mu_\infty}\}\simeq V_{\mu_\infty}\, .
\end{equation}

Note that the restriction of (\ref{def:fvInf}) to $V_{\mu_j}$ and $\bZ_j$ is the $G_j$-map
\[f_{w,\mu_j}(x)=\du{w}{\pi_{\mu_j}^*(a)e_{\mu_j}^*}= \du{w}{\pi_{\mu_j}^*(a)e_{\mu_\infty}^*}\, ,\quad x=a\cdot x_\infty\, , \]
introduced in (\ref{de:fvarphi}). That this is possible
follows from the proof of Lemma \ref{le:PrlimKin}. Hence we have a canonical $G_j$-map $\C[\bZ_j]_{\mu_j}\hookrightarrow \C[\bZ_k]_{\mu_k}$ for $k\ge j$ such that the following diagram commutes:

\begin{equation*}
\xymatrix{
V_{\mu_j} \ar[d] \ar[r] &
 V_{\mu_k} \ar[d]\ar[r] & \ldots  \\
\C[\bZ_j]_{\mu_j}\ar[r] &  \C[\bZ_k]_{\mu_k}\ar[r] &\ldots
}
\end{equation*}

As $\C[\bZ_j]=\sum^\oplus \C[\bZ_j]_{\mu_j}$ and  $\C[\bZ_k]=\sum^\oplus \C[\bZ_k]_{\mu_k}$ one derives that the spaces $\C[\bZ_j]$ form an injective system. Note that $\varinjlim \C[\bZ_j]_{\mu_j}$ and $\C_i [\bZ_\infty]:=\varinjlim \C[\bZ_j]$ carry natural $G_\infty$-module structures. This proves part of the following theorem:

\begin{theorem} The space $\C [\bZ_\infty]_{\mu_\infty}$ is an algebraically irreducible $G_\infty$-module, and
\[\C [\bZ_\infty]_{\mu_\infty} =\varinjlim \C[\bZ_j]_{\mu_j}\simeq V_{\mu_\infty}\, .\]
Furthermore
\[\C_i [\bZ_\infty] ={\sum_{\mu_\infty\in\gL_\infty^+}\!\!}^\oplus \C[\bZ_\infty]_{\mu_\infty}\]
as a $G_\infty$-module
\end{theorem}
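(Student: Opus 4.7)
The plan is to build the theorem from the explicit $G_\infty$-equivariant map $w\mapsto f_{w,\mu_\infty}$ of (\ref{def:fvInf}), and then to harvest the direct-sum decomposition as the inductive limit of the finite-dimensional decompositions in Lemma~\ref{le:decC}.

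First I would verify that $V_{\mu_\infty}\to\C[\bZ_\infty]_{\mu_\infty}$, $w\mapsto f_{w,\mu_\infty}$, is a $G_\infty$-equivariant linear isomorphism. Surjectivity holds by the very definition (\ref{def:CZInfty}). For injectivity, use that $V_{\mu_\infty}=\varinjlim V_{\mu_j}$, so any nonzero $w$ lies in some $V_{\mu_j}$; then the restriction of $f_{w,\mu_\infty}$ to $G_j$ coincides with $f_{w,\mu_j}$ because the spherical vectors were chosen compatibly with the projections $\pr_{j,k}$ as in the proof of Lemma~\ref{le:PrlimKin}, and $f_{w,\mu_j}\not=0$ by the finite-dimensional isomorphism in (\ref{de:fvarphi}). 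Equivariance is immediate from the defining formula using $\pi_{\mu_\infty}^*$. Since the preceding subsection records that $V_{\mu_\infty}$ is algebraically irreducible, so is $\C[\bZ_\infty]_{\mu_\infty}$.

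For the identification with the inductive limit, I would invoke the commutative square displayed just before the theorem statement. It shows that the family of $G_j$-isomorphisms $V_{\mu_j}\to \C[\bZ_j]_{\mu_j}$ intertwines the transition maps $\iota_{k,j}$ on both sides. Passing to the limit yields
\[
\varinjlim \C[\bZ_j]_{\mu_j}\;\simeq\;\varinjlim V_{\mu_j}\;=\;V_{\mu_\infty}\;\simeq\;\C[\bZ_\infty]_{\mu_\infty},
\]
where the last isomorphism is the one built in the previous step; by construction all three identifications agree inside $\C_i[\bZ_\infty]$, so the two spaces coincide there.

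For the direct-sum statement, start from $\C[\bZ_j]=\bigoplus_{\mu_j\in\gL_j^+}\C[\bZ_j]_{\mu_j}$ (Lemma~\ref{le:decC}) and pass to the inductive limit. Two points need attention: every element of $\C_i[\bZ_\infty]$ lies in some $\C[\bZ_j]$ and is thus a finite sum of components; and for distinct $\mu_\infty,\nu_\infty\in\gL_\infty^+$ the components $\C[\bZ_\infty]_{\mu_\infty}$ and $\C[\bZ_\infty]_{\nu_\infty}$ intersect trivially, since at any sufficiently large level $j$ one has $\mu_j\neq\nu_j$ and $\C[\bZ_j]_{\mu_j}\cap\C[\bZ_j]_{\nu_j}=\{0\}$. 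The point at which I expect to have to be careful is to confirm that the inclusion $\C[\bZ_j]_{\mu_j}\hookrightarrow\C[\bZ_k]$ lands exclusively inside the single component $\C[\bZ_k]_{\iota_{k,j}(\mu_j)}$: this is precisely the content of the multiplicity-one part of Theorem~\ref{le:isr}(2), transported through the finite-dimensional isomorphisms, and it guarantees that the transitions in the system $\{\C[\bZ_j]\}$ are strictly graded by $\gL_\infty^+$, so the inductive limit really is a direct sum.
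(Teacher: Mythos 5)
Your treatment of the isomorphism $V_{\mu_\infty}\simeq\C[\bZ_\infty]_{\mu_\infty}=\varinjlim\C[\bZ_j]_{\mu_j}$ and of the direct-sum decomposition is correct and in fact more detailed than the paper, which dismisses exactly these points with ``everything is clear except maybe the irreducibility statement.'' The genuine gap in your proposal is the irreducibility itself, which is the only point the paper actually argues. You deduce it from the assertion in Section~\ref{sec3} that $(\pi_{\mu_\infty},V_{\mu_\infty})$ is algebraically irreducible, but that assertion is stated there without proof, and the paper explicitly flags at the start of Section~\ref{se:RegInf} (``algebraically irreducible (see below for proof)'') that the proof is deferred to precisely this theorem. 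As written, your argument is therefore circular relative to the paper's logical structure: you prove the parts the authors consider obvious and cite forward for the part they prove.

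The missing argument is short and you should supply it. Let $W\subseteq V_{\mu_\infty}$ be a nonzero $G_\infty$-invariant subspace. Since $V_{\mu_\infty}=\bigcup_j V_{\mu_j}$, there is some $j$ with $W\cap V_{\mu_j}\neq\{0\}$, and hence $W\cap V_{\mu_k}\neq\{0\}$ for every $k\ge j$. Each $W\cap V_{\mu_k}$ is $G_k$-invariant, so irreducibility of the finite-dimensional module $V_{\mu_k}$ forces $V_{\mu_k}\subseteq W$ for all $k\ge j$, whence $W=V_{\mu_\infty}$. Transporting this through your isomorphism $V_{\mu_\infty}\simeq\C[\bZ_\infty]_{\mu_\infty}$ gives the irreducibility claimed in the theorem; with this inserted, your proof is complete and coincides in substance with the paper's.
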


\begin{proof} (See also \cite[Thm. 1]{KS77} and \cite[\S 1.17]{O90}.) Everything is clear except maybe the irreducibility statement. For that it is enough to show that $V_{\mu_\infty}$ is algebraically irreducible. So let $W\subset V_{\mu_\infty}$ be $G_\infty$-invariant. If $W\not=\{0\}$, then we must have $W\cap V_{\mu_j}\not= \{0\}$ for some $j$. But then $W\cap V_{\mu_k}\not=\{0\}$ for all $k\ge j$ and $W\cap V_{\mu_k}$ is $G_k$-invariant. As $V_{\mu_k}$ is algebraically irreducible it follows
that $V_{\mu_k}\subset W$ for all $k\ge j$. This finally implies that $W=V_{\mu_\infty}$.
\end{proof}

\begin{remark}\label{rem:fin-rank} {\rm In the case where the real infinite dimensional space $\bX_\infty$ has finite rank the space $\C_i[\bZ_\infty]$ has a nice representation theoretic description. In this case,
as mentioned earlier, we may assume $\gL^+_\infty=\gL^+_j$. We have also noted that each of the spaces $V_{\mu_j}$ is a unitary representation
of $U_j$ such that the embedding $V_{\mu_j}\hookrightarrow V_{\mu_k}$ is a $G_j$-equivariant isometry and the highest weight vector $u_{\mu_j}$ gets mapped to the highest weight vector $u_{\mu_k}$. In that way we have $u_{\mu_\infty}=u_{\mu_j}$ for all $j$. Furthermore this leads to a pre-Hilbert structure on $V_{\mu_\infty}$ so that $V_{\mu_\infty}$ can be completed to a unitary irreducible $K_\infty$-spherical representation $\hat V_{\mu_\infty}$ of $G_\infty$ (see \cite[Thm. 4.5]{DOW12}). Furthermore, it is shown in \cite{DO13} that each unitary $K_\infty$-spherical representation $(\pi,W_\pi)$ of $G_\infty$ such that $\pi|_{U_j}$ extends to a holomorphic representation of $G_j$ for each $j$ is locally finite and of the form $\hat{V}_{\mu_\infty}$ for some $\mu_\infty\in\gL^+$. Moreover, each of those representations is conical in the sense that $\hat V_{\mu_\infty}^{M_\infty N_\infty} \not=\{0\}$. Finally, each irreducible unitary conical representation $(\pi,W_\pi)$ of $G_\infty$, whose restriction to $U_\infty$ whose restriction to $U_j$ extends to a holomorphic representation of $G_j$ is unitarily equivalent to some $\hat V_{\mu_\infty}$. }\hfill $\qed$
\end{remark}

The inclusions $\iota_{k,j} :  \bZ_j \hookrightarrow \bZ_k$ lead to projections on the spaces of functions given by restriction. In particular, we have the projections
\begin{equation}
 \pr_{j,k} : \C[\bZ_k]\to \C[\bZ_j]
\end{equation}
satisfying $\pr_{j,n}\circ \pr_{n,k}=\pr_{j,k}$. Hence  $\{\C [\bZ_j]\}$ is a projective sequence  and $ \varprojlim \C [\bZ_j]$ is a $G_\infty$-module, and in fact an algebra, of functions on $\bZ_\infty$. In fact, let $f=\{f_j\}_{j\ge j_o}\in \varprojlim \C [\bZ_j] $ and $x\in \bZ_\infty$. Let $j$ be so that $x\in\bZ_j$. Define $f(x):=f_j(x)$. If $k\ge j$, then $ \pr_{j,k}(f_k)= f|_{\bZ_j}$. In particular $f_k(x)=f_j(x)$. Hence $f(x)$ is well defined.

In general we do not have $\pr_{j,k} (\C[\bZ_k]_{\mu_k})\subset \C[\bZ_j]_{\mu_j}$, but
\[\pr_{j,k}\circ \iota_{k,j}|_{\C[\bZ_j]_{\mu_j} }= \mathrm{id}_{\C[\bZ_j]_{\mu_j}}\]
as this is  satisfied on level of representations $V_{\mu_j}\to V_{\mu_k}\to V_{\mu_j}$ as
mentioned before. Hence.  by Lemma \ref{le:InjProLim}, we can view $\varinjlim \C[\bZ_\infty]_{\mu_j}$ as a submodule of $\varprojlim \C[\bZ_j]_{\mu_j}$. We record the following lemma which is obvious from the above discussion:

\begin{lemma} We have a $G_\infty$-equivariant embedding
\[\C_i [\bZ_\infty] \hookrightarrow \varprojlim \C[ \bZ_j]\, .\]
\end{lemma}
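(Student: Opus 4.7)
The plan is to apply Lemma \ref{le:InjProLim} with $V_j := \C[\bZ_j]$. This requires three ingredients: (i) an injective structure on $\{\C[\bZ_j]\}$ via $G_j$-equivariant inclusions $\iota_{k,j}\colon \C[\bZ_j]\hookrightarrow \C[\bZ_k]$; (ii) a projective structure via the restriction maps $\pr_{j,k}\colon \C[\bZ_k]\to \C[\bZ_j]$; and (iii) the compatibility $\pr_{j,k}\circ \iota_{k,j}=\id_{\C[\bZ_j]}$. The first two have been established in the discussion preceding the lemma, so the only thing left to verify is (iii).

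Using the isotypic decomposition $\C[\bZ_j]=\bigoplus_{\mu_j\in\gL^+_j}\C[\bZ_j]_{\mu_j}$ together with the fact that $\iota_{k,j}$ respects this decomposition (sending $\C[\bZ_j]_{\mu_j}$ into $\C[\bZ_k]_{\mu_k}$ where $\mu_k=\iota_{k,j}(\mu_j)$), it suffices to verify (iii) on each summand. On generators this amounts to computing, for $w\in V_{\mu_j}\subset V_{\mu_k}$ and $x=a\cdot x_\infty$ with $a\in G_j$,
\[f_{w,\mu_k}(x)=\du{w}{\pi_{\mu_k}^*(a)e_{\mu_k}^*}=\du{w}{\pi_{\mu_j}^*(a)e_{\mu_j}^*}=f_{w,\mu_j}(x),\]
where the middle equality uses that $V_{\mu_j}$ is $\pi_{\mu_k}(G_j)$-stable (Theorem \ref{le:isr}) and that $e_{\mu_j}^*=\pr_{j,k}(e_{\mu_k}^*)$ by the normalization fixed after Lemma \ref{le:PrlimKin}. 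This is exactly the identity recorded immediately above the lemma statement.

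With (iii) in hand, Lemma \ref{le:InjProLim} delivers the required $G_\infty$-equivariant embedding: an element $f\in \C_i[\bZ_\infty]=\varinjlim\C[\bZ_j]$ represented at level $j$ is sent to the coherent tuple $(\iota_{k,j}(f))_{k\ge j}\in \varprojlim\C[\bZ_j]$. There is no genuine obstacle here; once the consistent normalization of the highest weight vectors and spherical functionals is set up, the statement is a formal consequence of the abstract injective/projective limit lemma.
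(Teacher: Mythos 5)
Your proposal is correct and follows essentially the same route as the paper: the paper also reduces everything to the identity $\pr_{j,k}\circ\iota_{k,j}|_{\C[\bZ_j]_{\mu_j}}=\id$, verified at the level of the representations $V_{\mu_j}\to V_{\mu_k}\to V_{\mu_j}$ via the normalization of the $e_{\mu_j}^*$ from Lemma \ref{le:PrlimKin}, and then invokes Lemma \ref{le:InjProLim}. Your explicit computation of $f_{w,\mu_k}|_{\bZ_j}=f_{w,\mu_j}$ is exactly the identity the paper records just before the lemma, and your remark that only the composite (not $\pr_{j,k}$ itself) needs to respect the isotypic decomposition matches the paper's caveat.
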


\subsection{Regular functions on $\Xi_\infty$}\label{se:RegfctsHor} 
\noindent
In order to construct regular functions on $\Xi_\infty$ we apply the same construction to the horospherical spaces $\Xi_j$ we used for the symmetric spaces $\bZ_j$. As the arguments are basically the same, we often just state the results.

The following can easily be proved for at least some examples of infinite rank symmetric spaces like $\SL (j,\C)/\SO (j,\C)$, but we only have a general prove in the obvious case of finite rank.

\begin{lemma} Assume that the rank of $\bZ_j$ is constant. Then for $k\ge j$ we have
\[M_j=M_k\cap G_j\quad \text{ and }  \quad N_j=N_k\cap G_j\, .\]
\end{lemma}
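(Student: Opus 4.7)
Constant rank means $\fa_j=\fa_k=:\fa$ for all $k\geq j$. The ordering compatibility set up in Section~\ref{se:3.1} then forces $\Sigma_j^+\subseteq \Sigma_k^+$ as subsets of $\fa^*$. Since $\fa\subseteq \fg_j$, the subalgebra $\fg_j$ is $\mathrm{ad}(\fa)$-stable; moreover, the weight-space definition of $\fg_{j,\alpha}$ coincides inside $\fg_j$ and inside $\fg_k$, which gives $\fg_j\cap \fg_{k,\alpha}=\fg_{j,\alpha}$ for $\alpha\in\Sigma_j$ and $\fg_j\cap\fg_{k,\alpha}=0$ for $\alpha\in\Sigma_k\setminus\Sigma_j$ (a nonzero intersection would be a weight space of $\fg_j$, forcing $\alpha\in\Sigma_j$). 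The $M$-equality is then immediate from the standing hypothesis $K_j=K_k\cap G_j$: any $x\in Z_{K_k}(A_k)\cap G_j$ lies in $K_j$ and centralizes $A_j=A_k$, so $x\in Z_{K_j}(A_j)=M_j$; the reverse inclusion is clear.

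For $N_j=N_k\cap G_j$, I would first establish the Lie algebra version $\fn_k\cap\fg_j=\fn_j$. Decomposing any $X\in\fn_k\cap\fg_j$ into its $\mathrm{ad}(\fa)$-weight components $X=\sum_{\alpha\in\Sigma_k^+}X_\alpha$ with $X_\alpha\in\fg_{k,\alpha}$, each $X_\alpha$ can be recovered as a polynomial in the commuting operators $\mathrm{ad}\, H$ applied to $X$; since $\mathrm{ad}(\fa)$ preserves $\fg_j$, each $X_\alpha$ lies in $\fg_j\cap\fg_{k,\alpha}$, hence equals a vector in $\fg_{j,\alpha}$ when $\alpha\in\Sigma_j^+$ and is zero otherwise. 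Thus $X\in\fn_j$; the reverse inclusion is obvious.

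To lift the Lie algebra statement to the group level, I use uniqueness of the Iwasawa decomposition. Take $n\in N_k\cap G_j$. Its Iwasawa decomposition inside $G_j=K_jA_jN_j$ reads $n=k\,a\,n'$ with $k\in K_j\subseteq K_k$, $a\in A_j=A_k$, $n'\in N_j\subseteq N_k$. This is therefore also a valid Iwasawa factorization of $n$ inside $G_k=K_kA_kN_k$. But $n$ already lies in $N_k$, so $n=e\cdot e\cdot n$ is another such factorization. Uniqueness of the $G_k$-Iwasawa decomposition forces $k=a=e$ and $n=n'\in N_j$, which together with the obvious inclusion $N_j\subseteq N_k\cap G_j$ yields the equality.

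The main obstacle I expect is precisely this final bridging step: the Lie algebra equality $\fn_k\cap\fg_j=\fn_j$ does not a priori preclude $N_k\cap G_j$ from being disconnected or containing elements outside $\exp\fn_j$, because $G_j$ is noncompact and closed intersections can in principle introduce extra components. The Iwasawa uniqueness argument dispatches this in one line and avoids any delicate analysis of $N_k\cap G_j$ as a Lie subgroup of $N_k$.
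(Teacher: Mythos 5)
The paper itself states this lemma without proof (it is presented as ``the obvious case of finite rank''), so I am judging your argument on its own merits. The reduction to the Lie algebra level is correct: with $\fa_j=\fa_k$ the identities $\fg_j\cap\fg_{k,\alpha}=\fg_{j,\alpha}$ do give $\fn_k\cap\fg_j=\fn_j$, and your argument for $M_j=M_k\cap G_j$ is complete. The gap is in the final bridging step. In this paper the groups $K_j,M_j,N_j$ \emph{without} the subscript ${}_o$ are the complex groups, and for the complex group $G_j$ with complex $K_j=G_j^{\theta}$ the product $K_jA_jN_j$ is only an open dense subset of $G_j$ --- the paper says exactly this in Section 2 --- not all of it. Already in $\SL(2,\C)$ with $K=\SO(2,\C)$ the unipotent element $\left(\begin{smallmatrix}1&0\\ i&1\end{smallmatrix}\right)$ lies outside $KAN$ (its first column $(a,c)$ satisfies $a^2+c^2=0$). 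So an element $n\in N_k\cap G_j$ need not admit the factorization $n=kan'$ that your argument starts from; asserting that it does is essentially what is to be proved. (Uniqueness is also delicate, since $K\cap A$ contains the nontrivial finite group $F$, though that part can be patched.) Your Iwasawa argument is valid verbatim for the real forms, where $G_{jo}=K_{jo}A_{jo}N_{jo}$ is a genuine global diffeomorphism, but that proves $N_{jo}=N_{ko}\cap G_{jo}$, not the complex statement that the definition of admissibility and the construction of $\Xi_\infty=G_\infty/M_\infty N_\infty$ actually require.

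The gap is repairable, and your instinct that the danger is disconnectedness of $N_k\cap G_j$ is the right one; the correct tool is unipotence and algebraicity rather than Iwasawa. Since $N_k=\exp\fn_k$ is a unipotent algebraic group over $\C$, the map $\exp:\fn_k\to N_k$ is an isomorphism of affine varieties, and $G_j$ is Zariski closed in $G_k$. For $n=\exp X\in N_k\cap G_j$ the curve $t\mapsto\exp(tX)$ is polynomial in $t$ and lies in $G_j$ for every $t\in\Z$ (these are the powers of $n$), hence for every $t\in\C$; therefore $X\in\fg_j\cap\fn_k=\fn_j$ and $n\in N_j$. Equivalently, $N_k\cap G_j$ is a closed algebraic subgroup of a unipotent group in characteristic zero, hence connected with Lie algebra $\fn_j$, hence equal to $N_j$. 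With this replacement of the last step, your proof is complete.
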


\begin{definition}{\rm
We say that the injective system of propagated symmetric spaces $\bZ_j$ is \emph{admissible} if $M_j=M_k\cap G_j$ and $N_j=G_j\cap N_k$ for all $k\ge j$.
}\end{definition}

{}From now on we will always assume that the sequence  $\{\bZ_j\}$ of symmetric spaces is admissible. Let $\xi_o=eM_\infty N_\infty$ be the base point of $\Xi_\infty$ and note that we can view this as the base point in $\Xi_j\simeq G_j\cdot \xi_o\subset \Xi_\infty$.

For $\mu_\infty= \varinjlim \mu_j\in\gL^+$,  $w\in V_{\mu_j}$,  $\xi=a\cdot \xi_o\in \Xi_j$, $a\in G_j$ we have
\[\psi_{w,\mu_j}(\xi )=\du{w}{\pi_{\mu_j}^*(a)u_{\mu_j}^*}= \du{w}{\pi_{\mu_j}^*(a)u_{\mu_\infty}^*}=:\psi_{w,\mu_\infty} (\xi )\, .\]
This defines $G_j$-equivariant inclusions
\[\C[\Xi_j]_{\mu_j}\hookrightarrow
\C[\Xi_k]_{\mu_k}\hookrightarrow \C[\Xi_{\infty}]_{\mu_\infty}:=\{\psi_{w,\mu_\infty}\mid w\in V_{\mu_\infty}\}\simeq
\varinjlim \C[\Xi_j]_{\mu_j} \, .
\]
We note that $V_{\mu_\infty}\to \C[\Xi_\infty]_{\mu_\infty}$, $w\mapsto \psi_{w,\pi_\infty}$, is a $G_\infty$-isomorphism. With the same argument as above this leads to an injective sequence
\[\C[\Xi_j] \hookrightarrow\C[\Xi_k] \hookrightarrow \varinjlim \C[\Xi_j]=:\C_i [\Xi_\infty] \, . \]

\begin{theorem}\label{th:CXiInf} Assume that the sequence $\{\bZ_j\}$ is admissible. Then
\[\C_i[\Xi_\infty]={\sum_{\mu_\infty\in\gL^+_\infty}\!\!\!}^\oplus \C [\Xi_\infty]_{\mu_\infty}\, .\]
\end{theorem}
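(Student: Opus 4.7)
The plan is to mirror the argument given just before the theorem for $\C_i[\bZ_\infty]$, substituting $\Xi_j$ for $\bZ_j$, the $M_\infty N_\infty$-invariant vector $u_{\mu_\infty}^*$ for the $K_\infty$-fixed vector $e_{\mu_\infty}^*$, and exploiting admissibility to make everything compatible. First I would note that admissibility is exactly what is needed so that the natural map $G_j/M_jN_j \to G_k/M_kN_k$ is a well-defined $G_j$-equivariant embedding $\Xi_j \hookrightarrow \Xi_k$; indeed $M_j = M_k \cap G_j$ and $N_j = N_k \cap G_j$ force the fiber of $G_j$-cosets to inject into the fiber of $G_k$-cosets. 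This gives restriction maps $\pr_{j,k}\colon \C[\Xi_k] \to \C[\Xi_j]$, and the identities $\pr_{j,k}\circ\iota_{k,j} = \mathrm{id}$ on each $\C[\Xi_j]_{\mu_j}$, which are inherited from the analogous identity $V_{\mu_j} \hookrightarrow V_{\mu_k} \to V_{\mu_j}$ established in Theorem \ref{le:isr}.

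Next, starting from Lemma \ref{le:decC}, each finite-level space decomposes as
\[
\C[\Xi_j] = {\sum_{\mu_j \in \gL^+_j}\!\!\!}^\oplus \C[\Xi_j]_{\mu_j}.
\]
Given any $f \in \C_i[\Xi_\infty] = \varinjlim \C[\Xi_j]$, pick $j$ with $f \in \C[\Xi_j]$ and write $f = \sum_s \psi_{w_s,\mu_j^{(s)}}$ as a finite sum of $G_j$-type components. Under the embedding $\C[\Xi_j]_{\mu_j^{(s)}} \hookrightarrow \C[\Xi_\infty]_{\mu_\infty^{(s)}}$ (discussed just before the theorem, where $\mu_\infty^{(s)} = \varinjlim \iota_{k,j}(\mu_j^{(s)})$), each summand is sent into a single $\mu_\infty^{(s)}$-component of $\C_i[\Xi_\infty]$. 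Therefore the sum map
\[
{\sum_{\mu_\infty \in \gL^+_\infty}\!\!\!}^\oplus \C[\Xi_\infty]_{\mu_\infty} \longrightarrow \C_i[\Xi_\infty]
\]
is surjective.

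For directness of the sum, the key observation is that each $\C[\Xi_\infty]_{\mu_\infty}\simeq V_{\mu_\infty}$ is an algebraically irreducible $G_\infty$-module (by the same proof as in the previous theorem: an invariant subspace meets some $V_{\mu_j}$ nontrivially, hence contains all of $V_{\mu_k}$ for $k \geq j$, hence all of $V_{\mu_\infty}$). Moreover, distinct $\mu_\infty$ yield non-isomorphic $G_\infty$-modules, because stabilizing at a large enough $j$ they realize distinct $\pi_{\mu_j}$, which already occur with multiplicity one in $\C[\Xi_j]$ by Lemma \ref{le:decC}. Any nontrivial intersection of $\C[\Xi_\infty]_{\mu_\infty}$ with the sum of the others would, upon restriction to a sufficiently large $G_j$, violate this multiplicity-one statement.

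The main obstacle is not any one of these steps individually but verifying the first one carefully: one has to check that admissibility is genuinely needed and sufficient to ensure the $\psi_{w,\mu_\infty}$ functions on $\Xi_\infty$, defined through $\pi_{\mu_\infty}^*$ and the highest-weight functional $u_{\mu_\infty}^*$, actually restrict on $\Xi_j$ to the $\psi_{w,\mu_j}$ defined at finite level. This amounts to checking compatibility of the $M_jN_j$-invariance of $u_{\mu_j}^*$ with the projective limit $u_{\mu_\infty}^* = \{u_{\mu_j}^*\}_j \in \varprojlim V_{\mu_j}^*$, which is exactly the horospherical analog of Lemma \ref{le:PrlimKin}; admissibility is what makes this work, since it forces any $M_kN_k$-invariant functional to restrict to an $M_jN_j$-invariant one.
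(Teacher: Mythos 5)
Your proposal is correct and follows precisely the route the paper intends: it states just before Theorem \ref{th:CXiInf} that the construction for $\Xi_j$ parallels the one for $\bZ_j$ ``as the arguments are basically the same,'' and your write-up simply makes that parallel explicit, including the two points the paper leaves tacit --- that admissibility yields the embeddings $\Xi_j\hookrightarrow\Xi_k$ and the compatibility $\psi_{w,\mu_\infty}|_{\Xi_j}=\psi_{w,\mu_j}$ via the stable highest weight vector $u_{\mu_j}^*=u_{\mu_k}^*$. No gaps; the directness argument via multiplicity one at a sufficiently large finite level is exactly what is needed.
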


Denote by $\Gamma_j$ the normalized Radon transform $\Gamma_j :\C[\bX_j]=\C[\bZ_j]\to \C [\Xi_j]$ introduced in Section~\ref{Gamma-def} and set
\begin{equation}\label{de:Rinfty}
\G_\infty (f_{v,\mu_\infty}  ):= \psi_{v,\mu_\infty}\, .
\end{equation}
Then $\G_\infty$ defines a $G_\infty$-equivariant map $\G_\infty : \C_i[\bZ_\infty]\to \C_i [ \Xi_\infty]$ and
\[\Gamma_\infty =\varinjlim \Gamma_j\,.\]
As each $\Gamma_j$ is invertible it follows that $\Gamma_\infty$ is also invertible. In fact, the inverse is $\Gamma_\infty^{-1}=\varinjlim \Gamma_{j}^{-1}$ which maps $\psi_{v,\mu_\infty}$ to $f_{v,\mu_\infty}$. As a consequence we obtain the following theorem:

\begin{theorem}\label{thm:Gamma} Suppose that the sequence $\{\bZ_j\}$ is admissible. Let $\mu_\infty=\varinjlim \mu_j \in \Lambda^+_\infty$, $k\ge j$, and  $w\in V_{\mu_k}$. Then
\[\iota_{k,j}\circ \Gamma_j f_{w,\mu_j} = \Gamma_k(\iota_{k,j}\circ f_{w,\mu_j})=
\psi_{w,\mu_\infty}\]
and
\[\iota_{k,j}\circ \Gamma_j^{-1} \psi_{w,\mu_j} =
\Gamma_k^{-1}(\iota_{k,j}\circ \psi_{w,\mu_j})=
f_{w,\mu_\infty}\, .\]
In particular, we have the following commutative diagram: 

\begin{equation*}
\xymatrix{\cdots \, \ar[r]&\C[\bZ_j] \ar[r]^{\iota_{k,j}} \ar[d]_{\G_j}&
\C[\bZ_k]\ar[r]^{\iota_{k,\infty}}\ar[d]_{\G_k}&
 \C_i[\bZ_\infty]\ar[d]_{\G_\infty} \\
\cdots \, \ar[r]
&\C[\Xi_j]\ar[r]^{\iota_{k,j}}\ar[u]_{\G_j^{-1}} &\C[\Xi_k]\ar[r]^{\iota_{\infty,k}}
\ar[u]_{\G_k^{-1}}
 &
\C_i[\Xi_\infty]\ar[u]_{\G_\infty^{-1}}
}
\end{equation*}
\end{theorem}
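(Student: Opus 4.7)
The plan is to reduce the theorem to a direct verification on the matrix coefficients defining $f_{w,\mu_j}$ and $\psi_{w,\mu_j}$, and then to observe that the commutative diagram is then immediate from the definition $\G_\infty=\varinjlim\G_j$ given in (\ref{de:Rinfty}). Throughout, the decisive ingredient is that the three families of distinguished vectors $\{u_{\mu_j}\}$, $\{e^*_{\mu_j}\}$, $\{u^*_{\mu_j}\}$ can be chosen coherently across levels.

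First I would assemble this compatibility. By Theorem \ref{le:isr} and the discussion after it, we may take $u_{\mu_j}=u_{\mu_k}$ once $V_{\mu_j}\hookrightarrow V_{\mu_k}$ is fixed; Lemma \ref{le:PrlimKin} (and its proof) gives a projective family with $\pr_{j,k}(e^*_{\mu_k})=e^*_{\mu_j}$ normalized by $\du{u_{\mu_\infty}}{e^*_{\mu_\infty}}=1$. For the conical vector I would invoke admissibility: because $M_j=M_k\cap G_j$ and $N_j=G_j\cap N_k$, the restriction of $u^*_{\mu_k}$ to $V_{\mu_j}$ is still $M_jN_j$-fixed, and together with the uniqueness of such a vector up to scalar and the normalization $\du{\pi_{\mu_j}(s_{o,j})u_{\mu_j}}{u^*_{\mu_j}}=1$ (which is inherited since $s_{o,j}$ can be taken as a representative of the longest element in $W_j\subset W_k$) this forces $\pr_{j,k}(u^*_{\mu_k})=u^*_{\mu_j}$.

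With these identifications in hand, I would next verify $\iota_{k,j}(f_{w,\mu_j})=f_{w,\mu_k}$ and $\iota_{k,j}(\psi_{w,\mu_j})=\psi_{w,\mu_k}$ for $w\in V_{\mu_j}$ by a one-line calculation in duality: for $a\in G_j$,
\[\du{w}{\pi^*_{\mu_k}(a)e^*_{\mu_k}}=\du{\pi_{\mu_j}(a^{-1})w}{\pr_{j,k}(e^*_{\mu_k})}=\du{w}{\pi^*_{\mu_j}(a)e^*_{\mu_j}},\]
and the same computation with $u^*$ in place of $e^*$ handles the horospherical case. Combining these identities with the defining equation $\G_k(f_{w,\mu_k})=\psi_{w,\mu_k}$ yields the first equality in the statement; the outer identification with $\psi_{w,\mu_\infty}$ is then obtained by applying $\iota_{\infty,k}$, whose action on $\C[\Xi_k]_{\mu_k}$ is $\psi_{w,\mu_k}\mapsto \psi_{w,\mu_\infty}$ by the construction of $\C[\Xi_\infty]_{\mu_\infty}$ in Section~\ref{se:RegfctsHor}. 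Exchanging the roles of $f$ and $\psi$ gives the analogous statement for $\G_j^{-1}$.

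At this point the commutative diagram is essentially formal: each of the finite-level squares commutes by what was just proved, and the rightmost squares involving $\G_\infty$ and $\G_\infty^{-1}$ commute by the very definition of the inductive limit. The main obstacle I anticipate is entirely in the bookkeeping of the first step: the compatibility $\pr_{j,k}(u^*_{\mu_k})=u^*_{\mu_j}$ is where admissibility is genuinely used, and without it the identification $\iota_{k,j}(\psi_{w,\mu_j})=\psi_{w,\mu_k}$ can fail and the whole tower collapses.
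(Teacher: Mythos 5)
Your proposal is correct and follows essentially the same route as the paper, which presents this theorem as an immediate consequence of the preceding construction: the coherent choices of $u_{\mu_j}$, $e^*_{\mu_j}$, $u^*_{\mu_j}$ across levels give $\iota_{k,j}f_{w,\mu_j}=f_{w,\mu_k}$ and $\iota_{k,j}\psi_{w,\mu_j}=\psi_{w,\mu_k}$, after which everything follows from $\Gamma_k(f_{w,\mu_k})=\psi_{w,\mu_k}$ and $\Gamma_\infty=\varinjlim\Gamma_j$. The only cosmetic difference is that the paper obtains $u^*_{\mu_j}=u^*_{\mu_k}$ directly from the isometric embeddings $V^*_{\mu_j}\hookrightarrow V^*_{\mu_k}$ of Theorem~\ref{le:isr}, whereas you derive $\pr_{j,k}(u^*_{\mu_k})=u^*_{\mu_j}$ from admissibility, uniqueness of the conical vector, and the normalization; both arguments establish the same compatibility.
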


\subsection{The projective limit}
We  discuss the projective limit in more detail. First we need the following notation.
For $\mu,\nu\in \fa_o^*$ write
\begin{equation}\label{de:le}
\nu \le \mu\quad  \text{if}\quad \mu - \nu=\sum_{\alpha\in\Sigma^+} n_\alpha \alpha\, ,\quad n_\alpha\in \N_0\,.
\end{equation}
If $\nu \le \mu$ and $\nu\not= \mu$ then we also write $\nu <\mu$.
The main problem in studying the projective limit is the decomposition of
$V_{\mu_k}|_{G_j}$ for $k\ge j$. It is not clear if these
representations  decompose into representations with highest weight $\nu_j\le \mu_j$. In case the rank of
$\bX_\infty$ is finite that is correct. We therefore in the reminding of this subsection  make the assumption
that the rank of $\bX_\infty$ is finite.
In this case we can--and will--assume that $\fa_j=\fa$ for all $j$ and recall from earlier discussion that
$\Sigma_j=\Sigma$ is constant and so are the sets of positive roots $\Sigma_j^+=\Sigma^+$ and the
sets of
highest weights $\Lambda_j^+=\Lambda^+$.

Write
\begin{equation}\label{eq:rest}
(\pi_{\mu_k},V_{\mu_k})|_{G_j}\simeq \bigoplus_{s=0}^r(\pi_{s},W_{s})
\end{equation}
with $(\pi_0,W_{0})\simeq (\pi_{\mu_j},V_{\mu_j})$ which occurs with multiplicity one.

\begin{lemma} Assume that the rank of $\bX_\infty$ is finite. Let $\mu\in \gL^+$. Then  we have the following
\begin{enumerate}
\item  $e_{\mu_k}^*|_{W_s}=0$ if $W_s$ is not spherical.
\item Assume that $W_{s}\simeq V_{\nu}$ is spherical. Then $\nu < \mu_j$.
\end{enumerate}
\end{lemma}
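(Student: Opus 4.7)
The plan is to handle (1) via the spherical--conical duality recalled before Theorem \ref{t-CH}, and to handle (2) by a weight-comparison argument that rests essentially on the finite-rank standing assumption of this subsection.

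For (1) I would observe that $e_{\mu_k}^*$ is $K_k$-invariant by construction, so since $K_j\subset K_k$ it is \emph{a fortiori} $K_j$-invariant. Dualizing the $G_j$-equivariant decomposition (\ref{eq:rest}) gives $V_{\mu_k}^*\simeq\bigoplus_{s=0}^r W_s^*$ as $G_j$-modules, and I decompose $e_{\mu_k}^*=\sum_s e_s$ with $e_s\in W_s^*$; each $e_s$ is then automatically $K_j$-fixed. The spherical--conical duality theorem cited before Theorem \ref{t-CH} says (part~(2)) that an irreducible holomorphic representation of $G_j$ is spherical iff its dual is spherical, and (part~(1)) that the $K_j$-fixed vector is unique up to scalar when it exists. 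Hence $(W_s^*)^{K_j}=\{0\}$ whenever $W_s$ is non-spherical, forcing $e_s=0$.

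For (2) I would argue on weights. Under the finite-rank assumption $\fa_k=\fa_j=\fa$, $\Sigma_k^+=\Sigma_j^+=\Sigma^+$, and $\mu_k=\mu_j$ as elements of $\gL^+$. Every $\fa$-weight of $V_{\mu_k}$ is of the form $\mu_k-\sum_{\alpha\in\Sigma_k^+}n_\alpha\alpha$ with $n_\alpha\in\N_0$, which is the same as $\mu_j-\sum_{\alpha\in\Sigma^+}n_\alpha\alpha$. Since $W_s\subset V_{\mu_k}$, the $G_j$-highest weight $\nu$ of $W_s$ is such a weight, so $\nu\le\mu_j$ in the ordering (\ref{de:le}). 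For the strict inequality I would invoke the multiplicity-one assertion in (\ref{eq:rest}): $W_0\simeq V_{\mu_j}$ is the only summand with $G_j$-highest weight $\mu_j$, so any spherical summand other than $W_0$ satisfies $\nu\ne\mu_j$, hence $\nu<\mu_j$.

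The only conceptual subtlety is that comparing $\nu$ with $\mu_j$ via (\ref{de:le}) requires the positive systems on $\fa$ for $G_j$ and $G_k$ to coincide, and this is exactly what the finite-rank hypothesis of the subsection guarantees; without it one would have to work with the restriction maps $\rr_{j,k}\colon\fa_k^*\to\fa_j^*$ and the statement itself would need to be reformulated. Both (1) and (2) are otherwise routine applications of results already stated in the excerpt, so I expect no serious obstacle.
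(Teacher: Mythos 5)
Your proof is correct and follows essentially the same route as the paper: part (1) is the duality/uniqueness argument the paper dismisses as obvious, and part (2) is the same weight-comparison $\nu=\mu_k-\sum t_\alpha\alpha=\mu_j-\sum t_\alpha\alpha$ using the finite-rank identification $\fa_j=\fa_k$. You are in fact slightly more careful than the paper, which only establishes $\nu\le\mu_j$ and leaves the strict inequality (via the multiplicity-one occurrence of $V_{\mu_j}$ in \eqref{eq:rest}) implicit.
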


\begin{proof} The first claim is obvious. Let $\sigma$ be a weight of $V_{\mu_k}$.
\[\sigma  = \mu_k - \sum_{\alpha\in\Psi_k}  t_\alpha \alpha\]
with $t_\alpha$ nonnegative integers. This in particular holds if $\sigma$ is a highest weight of a spherical representation of $G_j$ proving the claim.
\end{proof}

\begin{lemma} Assume that the rank of $\bX_\infty$ is finite. Let $k>j$ and let $v\in V_{\mu_k}$.   Then
\[\psi_{v,\mu_\infty}|_{\Xi_j}=\psi_{v,\mu_k}|_{\Xi_j}=\psi_{\pr_{j,k}(v),\mu_j}\, .\]
\end{lemma}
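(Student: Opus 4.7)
The plan is to unwind the defining formulas for the three horocyclic coefficients and verify both equalities by direct computation, using the compatibility of the normalizations across levels set up in Section~\ref{sec3}. Throughout, fix $\xi=a\cdot\xi_o$ with $a\in G_j\subseteq G_k\subseteq G_\infty$. For the first equality, recall that by construction the conical functional $u_{\mu_\infty}^*\in V_{\mu_\infty}^*=\varprojlim V_{\mu_l}^*$ is the compatible family with $l$-th component $u_{\mu_l}^*$, and $\pi_{\mu_\infty}^*=\varprojlim \pi_{\mu_l}^*$; in particular $\pr_{k,\infty}(\pi_{\mu_\infty}^*(a) u_{\mu_\infty}^*)=\pi_{\mu_k}^*(a) u_{\mu_k}^*$. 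Since the pairing of a vector $v\in V_{\mu_k}\subseteq V_{\mu_\infty}$ with any element of $\varprojlim V_{\mu_l}^*$ factors through the projection onto $V_{\mu_k}^*$, this immediately yields
\[\psi_{v,\mu_\infty}(a\cdot\xi_o)=\du{v}{\pi_{\mu_\infty}^*(a) u_{\mu_\infty}^*}=\du{v}{\pi_{\mu_k}^*(a) u_{\mu_k}^*}=\psi_{v,\mu_k}(a\cdot\xi_o).\]

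For the second equality, the embedding $V_{\mu_j}^*\hookrightarrow V_{\mu_k}^*$ induced by the $U_k$-invariant inner product is explicitly $\nu\mapsto \nu\circ\pr_{j,k}$, and the normalizations in Section~\ref{sec3} have been arranged so that $u_{\mu_k}^*=u_{\mu_j}^*\circ\pr_{j,k}$ under this embedding. The key structural input I will invoke is that the orthogonal decomposition $V_{\mu_k}=V_{\mu_j}\oplus V_{\mu_j}^{\perp_{\mu_k}}$ is a splitting of $G_j$-modules: the first summand is $G_j$-stable by Theorem~\ref{le:isr}(1), and since $V_{\mu_k}$ is the holomorphic extension to $G_k$ of a unitary $U_k$-representation, the $U_j$-invariant orthogonal complement is automatically $G_j$-stable. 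Consequently $\pr_{j,k}$ is $G_j$-equivariant, so for $a\in G_j$ and $v\in V_{\mu_k}$ we have $\pr_{j,k}(\pi_{\mu_k}(a^{-1}) v)=\pi_{\mu_j}(a^{-1}) \pr_{j,k}(v)$. Combining this with the identification of the conical functionals gives
\[\psi_{v,\mu_k}(a\cdot\xi_o)=\du{\pi_{\mu_k}(a^{-1}) v}{u_{\mu_k}^*}=\du{\pi_{\mu_j}(a^{-1}) \pr_{j,k}(v)}{u_{\mu_j}^*}=\psi_{\pr_{j,k}(v),\mu_j}(a\cdot\xi_o).\]

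The main obstacle is the $G_j$-equivariance of $\pr_{j,k}$, which rests on the unitary-holomorphic correspondence for finite-dimensional representations of $G_k$; this is where the finite-rank hypothesis is quietly used, since it ensures that the parameters $\mu_l$ all stabilize and that the compatible choices of highest weights and Weyl group representatives needed to arrange $u_{\mu_k}^*=u_{\mu_j}^*\circ\pr_{j,k}$ are consistent. Everything else reduces to a straightforward bookkeeping with the defining formulas and the projective/inductive limit constructions.
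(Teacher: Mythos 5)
Your argument is correct and is essentially the paper's own proof written out in full: the paper observes that for $x\in G_j$ the vector $\pi_{\mu_k}^*(x)u_{\mu_k}^*$ stays in the embedded copy $\langle\pi_{\mu_k}^*(G_j)u_{\mu_k}^*\rangle\simeq V_{\mu_j}^*$ and invokes $u_{\mu_j}^*=u_{\mu_k}^*$, which is the dual formulation of your statement that $\pr_{j,k}$ is $G_j$-equivariant and $u_{\mu_k}^*=u_{\mu_j}^*\circ\pr_{j,k}$. Both proofs rest on the same multiplicity-one splitting $V_{\mu_k}|_{G_j}=V_{\mu_j}\oplus V_{\mu_j}^{\perp}$ and the compatibility of the normalized conical vectors, so there is nothing to add.
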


\begin{proof} Let $x\in G_j$. Then $\pi_{\mu}^* (x)u_{\mu_k}^*\in \langle \pi_{\mu_k} ( G_j) u_{\mu_k}^*
\rangle = V_{\mu_j^*}=V_{\mu_j}^*$. The claim now follows as $u_{\mu_j}^*=u_{\mu_k}^*$.
\end{proof}

We finish this section by recalling the graded version of $\C[\bZ_j]$ and $\Gamma$. Recall that
we are assuming that the rank of $\bX_j$ is finite.
Note that even if $\{\C [\bZ_j]\}$ is a projective sequence, the sequence $\{\C[\bZ_j]_{\mu_j}\}$ is not projective in general.
Consider the ordering on $\fa_o^*$ as above.  This   defines a filtration on $\C [\bZ_j]$ and we denote by $\gra \C [\bZ_j]$ the corresponding graded module. Thus
\begin{equation}\label{de:ga}\gra \C [\bZ_j ]_{\mu_j} =\bigoplus_{\nu_j \le \mu_j} \C [\bZ_j ]_{\nu_j} / \bigoplus_{\nu_j <\mu_j} \C [\bZ_j ]_{\nu_j}
\end{equation}
and
\begin{equation}\label{de:ga1}
\gra \C [\bZ_j]\simeq_{G} \bigoplus_{\mu\in \gL^+_j} \gra \C [\bZ_j ]_{\mu_j}\, .
\end{equation}

If $f\in \C[\bZ_j]$ then $[f]$ denotes the class of $f$ in $\gra \C[\bZ_j]$. Let $\kappa_j$, respectively $\kappa_{\mu_j}$, be the $G$-isomorphism $\C [\bZ_j]\simeq \gra \C [\bZ_j]$, respectively $\C [\bZ_j]_{\mu_j}\simeq \gra \C [\bZ_j]_{\mu_j}$, given by $f\mapsto [f]$. We let $\gG_j:=
\Gamma_j\circ \kappa^{-1}_j$ and $\gGm{\mu_j}=\Gamma_j\circ \kappa_{\mu_j}^{-1}$. Note that
this construction is also valid for $j=\infty$.

\begin{proposition}[Prop. 7 \cite{HPV02}] The $G_j$-map $\gG_j$ is an ring isomorphism $\gra \C[\bZ_j]\to \C[\Xi_j]$.
\end{proposition}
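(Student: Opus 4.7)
The plan is to verify that $\gG_j$ is multiplicative by equipping both sides with compatible graded ring structures under which the multiplication is determined, via Schur's lemma and multiplicity one, by its values on highest weight vectors.

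First I would show that $\C[\Xi_j]$ is naturally a $\gL^+_j$-graded algebra. Since $A_j$ normalizes $M_jN_j$, it acts on $\Xi_j = G_j/M_jN_j$ by right translation, and this action commutes with the left $G_j$-action on $\C[\Xi_j]$. Hence each isotypic component $\C[\Xi_j]_\mu$ is $A_j$-stable, and Schur's lemma forces $A_j$ to act there by a character $\chi_\mu$; evaluating on $\psi_\mu$ via the defining formula $\psi_{v,\mu}(aM_jN_j)=\langle v,\pi_\mu^*(a)u_{\mu_j}^*\rangle$ and the fact that $u_{\mu_j}^*$ is a highest weight vector of $\pi_{\mu_j}^*$ gives $\chi_\mu(h)=h^{-w_o\mu}$, so distinct $\mu$ yield distinct characters. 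Since multiplication adds $A_j$-characters, this forces $\C[\Xi_j]_\mu\cdot \C[\Xi_j]_{\mu'}\subseteq \C[\Xi_j]_{\mu+\mu'}$. Moreover $\psi_\mu\psi_{\mu'}$ is a highest weight vector of weight $\mu+\mu'$ satisfying $(\psi_\mu\psi_{\mu'})(s_o\cdot\xi_o)=1$, so by the normalization and multiplicity-one statement of Lemma~\ref{le:decC} it equals $\psi_{\mu+\mu'}$.

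Next I would show that the filtration $F_\mu := \bigoplus_{\nu\le \mu}\C[\bZ_j]_\nu$ on $\C[\bZ_j]$ is multiplicative. The pointwise multiplication restricts to a $G_j$-equivariant map $\C[\bZ_j]_\mu\otimes \C[\bZ_j]_{\mu'}\to \C[\bZ_j]$ which, via the isomorphisms $\C[\bZ_j]_\mu\cong V_{\mu_j}$, factors through the decomposition of $V_{\mu_j}\otimes V_{\mu'_j}$; the standard fact that every irreducible constituent $V_{\nu_j}$ of this tensor product satisfies $\nu\le \mu+\mu'$ yields $F_\mu\cdot F_{\mu'}\subseteq F_{\mu+\mu'}$, so $\gra\C[\bZ_j]$ inherits a $\gL^+_j$-graded ring structure via (\ref{de:ga}). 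Moreover $f_\mu f_{\mu'}$ is a left $N_j$-invariant $A_j$-eigenfunction of weight $\mu+\mu'$ whose value at $x_o$ equals $1$, so by uniqueness of the normalized highest weight vector it equals $f_{\mu+\mu'}$; in particular $[f_\mu]\cdot[f_{\mu'}]=[f_{\mu+\mu'}]$ in $\gra\C[\bZ_j]$.

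With these structures in place, consider for fixed $\mu,\mu'$ the two $G_j$-equivariant bilinear maps
\[
m_1,\,m_2\colon \gra\C[\bZ_j]_\mu\times \gra\C[\bZ_j]_{\mu'}\to \C[\Xi_j]_{\mu+\mu'}
\]
defined by $m_1(x,y):=\gG_j(xy)$ and $m_2(x,y):=\gG_j(x)\gG_j(y)$. Both send $([f_\mu],[f_{\mu'}])$ to $\psi_{\mu+\mu'}$. Since $V_{\mu+\mu'}$ appears with multiplicity one in $V_{\mu_j}\otimes V_{\mu'_j}$ (the Cartan component), Schur's lemma forces $m_1=m_2$. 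Extending bilinearly over the grading shows $\gG_j$ is multiplicative on all of $\gra\C[\bZ_j]$, and combined with the fact that $\gG_j$ is already a $G_j$-equivariant linear isomorphism, this gives the desired ring isomorphism.

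The main obstacle is establishing the two algebraic compatibility statements — the multiplicativity of the filtration on $\C[\bZ_j]$ and the genuine grading (rather than mere filtration) on $\C[\Xi_j]$ — the concluding Schur argument is then essentially formal.
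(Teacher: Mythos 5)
Your argument is correct: the right $A_j$-action grading on $\C[\Xi_j]$, the multiplicativity of the filtration via the tensor-product weight estimate, the identities $\psi_\mu\psi_{\mu'}=\psi_{\mu+\mu'}$ and $f_\mu f_{\mu'}=f_{\mu+\mu'}$, and the final multiplicity-one/Schur step all hold as you state them. The paper itself gives no proof, deferring entirely to \cite{HPV02}, and your argument is essentially the one given there, so nothing further is needed.
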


In the following we will not distinguish between $\Gamma_j$ and $\gG_j$ except where necessary.  Thus we will prove statements for $\Gamma$ and then use it for $\gG$ without any further comments.

Identifying functions on $G_k/K_k$ with right $K_k$-invariant
functions on $G_k$ the following is clear
\[\pr_{j,k}(f_{v,\mu_k}) =\du{v}{\pi_{\mu_k}^*|_{G_k} e_{\mu_k}^*}\, .\]
Thus, the  kernel of $\pr_{j,k}$ is the $G_j$-module
\[\ker \pr_{j,k}=\{v\in V_k\mid v\perp  \pi_k(G_j)e_{\mu_k}^* \}\, .\]
As $\du{v_{\mu}}{e_{\mu_k}^*}\not= 0$ it follows that
$\ker \pr_{j,k}$ is a sum of $G_j$-modules with highest weight $< \mu_j$. Hence
\[\gra \pr_{j,k} : \gra \C[\bZ_k]\to \gra \C [\bZ_j]\]
is well defined and
\[ \gra \pr_{j,k} (\C [\bZ_k]_{\mu_k}) = \gra \C[\bZ_j]_{\mu_j}\, .\]
It follows  that the sequence $\{\gra \C[\bZ_j]_{\mu_j}\}$ is
projective.

We can also form the graded algebra $\gra \C_i[\bZ_\infty]$ as in
(\ref{de:ga}) and (\ref{de:ga1}).  Again we can view elements in $\gra \C_i[\bZ_\infty]$
as functions on $\bZ_\infty$ by choosing the unique element in $g\in [f ]\in
\gra\C_i [\bZ_\infty]_\mu$ so that $g \in \C_i[\bZ_\infty]_\mu$.
The inclusion
\[\gra\iota_{k,j} :\C[\bZ_j]=  \sum^{\oplus} \gra\C[\bZ_j]_{\mu_j}
\to \gra \C[\bZ_k]_{\mu_k}=\C[\bZ_k] \]
given by
\[ \sum [ f_{v_j,\mu_j}]\to \sum [f_{\iota_{k,j}(v_j),\mu_j}]\]
satisfies the
relation $\gra \pr_{j,k}\circ \gra \iota_{k,j}
=\id$.

The graded version $\gra \G_{\infty}$ is
also well defined by the requirement that $[f_{v,\mu_\infty}]$ is mapped into
$\psi_{v,\mu_\infty}$ and both $\gra \Gamma_\infty$ and $\gra \Gamma_\infty^{-1}$
are $G_j$-morphisms of rings.

\begin{theorem}\label{th:4.19} Assume that the rank of $\bX_\infty$ is finite. Suppose that $k\ge j$, $\mu\in \Lambda^+$,  $v\in V_{\mu_k}$
and $w\in V_{\mu_j}$. Denote by
$\pr_{V_{\mu_j}}$ the projection $V_{\mu_k}\to V_{\mu_j}$. Then
\begin{equation}\label{eq:prG}
\pr_{j,k}\Gamma_{\mu_k}(f_{v,\mu_k})=\Gamma_{\mu_j}(f_{\pr_{V_{\mu_j}}(v),\mu_j})=\psi_{\pr_{V_{\mu_j}}(v),\mu_j}
\end{equation}
and
\begin{equation}\label{eq:iG}
\iota_{k,j}\Gamma_{\mu_j}(f_{w,\mu_{j}})=
\Gamma_{\mu_k}(f_{w,\mu_k})=\psi_{w,\mu_k}\, .
\end{equation}
In particular $\pr_{j,k}\circ \iota_{k,j}=\id$.  Similar statements hold for the
inverse maps. Let $\gra \G^\infty =\varprojlim \gra \G_j$ and
$\gra \G^{\infty, -1}=\varprojlim \gra \G_j^{-1}$ We therefore have a commutative diagram:

\begin{equation*}
\xymatrix{
\cdots & \ar[l]
\gra \C[\bZ_k] \ar[d]_{\gra\G_k} & \ar[l]^{\pr_{j,k}}
\gra \C[\bZ_j]\ar[d]_{\gra \G_k} & \ar[l] ^{\pr_{k ,\infty}} \varprojlim \gra \C[\bZ_j]\ar[d]_{\gra \G^{\infty}} \\
\cdots &\ar[l]
\C[\Xi_k]\ar[u]_{\gra \G^{-1}_k}& \ar[l]^{\pr_{k,j}}  \C[\Xi_j] \ar[u]_{\gra \G_j^{-1}} &
\ar[l]^{\pr_{k,\infty}} \varprojlim \C[\Xi_j] \ar[u]_{ \gra \G^{\infty, -1}} 
}
\end{equation*}
\end{theorem}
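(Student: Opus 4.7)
The strategy is to read off the two identities \eqref{eq:prG} and \eqref{eq:iG} directly from the definition of the functions $f_{v,\mu}$ and $\psi_{v,\mu}$ and from the defining property of $\Gamma_\mu$ on each isotypic component, and then to upgrade to the graded/projective-limit level using the discussion above the theorem.

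For \eqref{eq:prG}, by definition of the normalized Radon transform on the $\mu_k$-isotypic piece we have $\Gamma_{\mu_k}(f_{v,\mu_k}) = \psi_{v,\mu_k}$, and applying $\pr_{j,k}$ on functions simply means restricting from $\Xi_k$ to $\Xi_j \simeq G_j \cdot \xi_o$. The lemma immediately preceding the theorem gives $\psi_{v,\mu_k}|_{\Xi_j} = \psi_{\pr_{V_{\mu_j}}(v),\mu_j}$: our normalizations force $u_{\mu_k}^* = u_{\mu_j}^*$, and for $a \in G_j$ the vector $\pi_{\mu_k}^*(a) u_{\mu_k}^*$ lies in the $G_j$-invariant subspace $V_{\mu_j}^* \subset V_{\mu_k}^*$, so the pairing $\langle v, \cdot \rangle$ sees only $\pr_{V_{\mu_j}}(v)$. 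The second equality is then the defining identity $\Gamma_{\mu_j}(f_{w,\mu_j}) = \psi_{w,\mu_j}$ applied to $w = \pr_{V_{\mu_j}}(v)$.

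For \eqref{eq:iG}, the inclusions $\iota_{k,j}$ on regular functions were defined via the representation inclusion $V_{\mu_j} \hookrightarrow V_{\mu_k}$ by $f_{w,\mu_j} \mapsto f_{w,\mu_k}$ and $\psi_{w,\mu_j} \mapsto \psi_{w,\mu_k}$, so
\[
\iota_{k,j}\Gamma_{\mu_j}(f_{w,\mu_j}) = \iota_{k,j}(\psi_{w,\mu_j}) = \psi_{w,\mu_k} = \Gamma_{\mu_k}(f_{w,\mu_k}) = \Gamma_{\mu_k}(\iota_{k,j}(f_{w,\mu_j})).
\]
The identity $\pr_{j,k}\circ \iota_{k,j} = \id$ then reduces to $\pr_{V_{\mu_j}}\circ (V_{\mu_j}\hookrightarrow V_{\mu_k}) = \id_{V_{\mu_j}}$ at the representation level, and the inverse statements follow by running the same chain of equalities backwards after applying $\Gamma^{-1}$ to both sides.

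To obtain the commutative diagram at the graded level, recall that $\pr_{j,k}$ does not in general send $\C[\bZ_k]_{\mu_k}$ into $\C[\bZ_j]_{\mu_j}$, since $V_{\mu_k}|_{G_j}$ also contains spherical constituents with highest weight $\nu_j < \mu_j$. However, as noted just before the theorem, $\gra \pr_{j,k}$ is well defined and maps $\gra \C[\bZ_k]_{\mu_k}$ onto $\gra \C[\bZ_j]_{\mu_j}$, so $\{\gra \C[\bZ_j]_{\mu_j}\}$ really is a projective sequence. Since $\gra \G_j$ is defined by $[f_{v,\mu_j}] \mapsto \psi_{v,\mu_j}$, the levelwise commutativity supplied by \eqref{eq:prG} and \eqref{eq:iG} descends to the graded level; taking the projective limit component by component and summing over $\mu \in \gL^+$ yields the diagram, with $\gra \G^\infty$ and $\gra \G^{\infty,-1}$ bijective as projective limits of bijections. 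The main bookkeeping obstacle is precisely this asymmetry: the restriction on the $\Xi$-side behaves cleanly on each isotypic component, whereas on the $\bZ$-side it only respects the filtration, and reconciling the two is exactly what forces one to grade before passing to the projective limit.
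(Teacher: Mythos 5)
Your proposal is correct and follows essentially the same route as the paper, which gives no separate proof for this theorem but treats it as a direct consequence of the immediately preceding material: the lemma $\psi_{v,\mu_k}|_{\Xi_j}=\psi_{\pr_{j,k}(v),\mu_j}$ (via $u_{\mu_j}^*=u_{\mu_k}^*$ and $\pi_{\mu_k}^*(G_j)u_{\mu_k}^*\subset V_{\mu_j}^*$), the definition of $\iota_{k,j}$ through the representation inclusions, and the observation that $\ker\pr_{j,k}$ on the $\bZ$-side only respects the filtration, forcing the passage to $\gra\C[\bZ_j]$ before taking the projective limit. Your identification of that asymmetry between the $\Xi$-side and the $\bZ$-side is exactly the point the paper's preparatory discussion is making.
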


\section{The Radon transform and its dual}\label{se:RT}
\noindent
For the moment we fix the symmetric spaces $\bX$, $\bY$, and $\bZ$ and leave out the index $j$. The Radon transform or its dual is initially defined on the space of compactly supported function. As the dual Radon transform is an integral over the compact group $K_o$ it is well defined on  $\C [\Xi]$ and $\C[\Xi_o]$, the space of regular functions on $\Xi$. But $N_o$ is  noncompact, so the Radon transform cannot be defined on $\C [\bX]$ as an integral over $N_o$. This problem was addressed in \cite{HPV02,HPV03}, and we recall the main results here. Then, based on ideas from \cite{G06,GKO}, we introduce two integral kernels which allow us to express both the Radon transform and the dual Radon transform as an integral against an integral kernel. We start the section by recalling the double fibration transform introduced in  \cite{H66,H70}.

\subsection{The double fibration transform}\noindent
Assume that $G$ is a Lie group and $H$ and $L$ two closed subgroups. We assume that all of those groups as well as $M=H\cap L$ are unimodular.
We have the double fibration

\begin{equation}\label{eq:dFibDi}
 \xymatrix{&G/M \ar[dl]_\pi \ar@{->}[dr]^p&\\
G/H& &G/L }
\end{equation}
\medskip

\noindent
where $\pi$ and $p$ are the natural projections. We say that $x=aH$ and $\xi=bL$ are incident if $aH\cap bL\not=\emptyset$. For $x\in G/H$ and $\xi \in G/L$ we set
\[\hat{x}:=\{\eta \in G/L\mid x \text{ and } \eta \text{ are incident }\}\]
and similarly
\[\xi^\vee :=\{y \in G/H\mid \xi \text{ and } y \text{ are incident }\}\, .\]

Assume that if  $ a\in L$ and $aH\subset HL$, then $a\in H$ and similarly, if $b\in H$ and $bL\subset LH$ then $b\in L$. Then we can view the points in $G/L$ as subsets of $G/H$, and similarly points in $G/H$ can be viewed as subsets of $G/L$. Then $\hat x$ is the set of all $\eta$ such that $x\in \eta$ and $\xi^\vee$ is the set of points $y\in G/H$ such that $y\in \xi$. We also have
\[\hat{x}=p (\pi^{-1}(x)) = aH\cdot \xi_0\simeq H/L \quad\text{and}\quad \xi^\vee =\pi (p^{-1}(\xi )) =b L\cdot x_o \simeq K/L.\]

Fix invariant measures on all of the above groups and the homogeneous spaces $G/H$, $G/L$, $G/M$, $H/M$ and $L /M$ such that for $f\in C_c(G)$ we have
\begin{eqnarray*}
\int_G f(a)\, da &=&\int_{G/M} f(am)\, d\mu_{G/M}(aH\cap L) dm\\
&=&
\int_{G/H} f(ah)\, d\mu_{G/H}(gH)dh\\
&=& \int_{G/L} f(an)\, d\mu_{G/L}(aL)dn
\end{eqnarray*}
and for $f\in C_c(H)$ and $\varphi \in C_c(L)$
\[\int_H f(h)\, dh=\int_{H/M} \int_M f (am)\, dmd\mu_{H/M}(aM)\]
and
\[\int_L f(a)\, da=\int_{L/M} \int_M f (am)\, dmd\mu_{L/M}(aM)\, .\]

The definition of the \textit{Radon transform} and the \textit{dual Radon transform} is now as follows. Let $x_o=eH$ and $\xi_o=eL$. If $\xi=a\cdot \xi_o\in \Xi$ and $x=b\cdot x_o\in X$, then
\begin{equation}\label{eq:RtrDfib}
\hf (\xi ):=\int_{L/M}f (a b\cdot x_o)\, d_{L/M}(bM)\, \quad f\in C_c(G/H)
\end{equation}
and
\begin{equation}\label{eq:DRtrDfib}
\vv (x) := \int_{H/M} \varphi (bh\cdot \xi_o)\, d_{H/M} (hM)\, .
\end{equation}
Then the following  duality holds:
\[\int_{G/L}\hat f (\xi )\varphi (\xi )\, d\mu_{G/L}(\xi ) = \int_{G/H} f(x)  \varphi^\vee
(x)\, d\mu_{G/H} (x)\, .\]

\subsection{The horospherical Radon transform and its dual}\label{se:RandDual}
\noindent
The example  studied most is the case $G=G_o$, $H=K_o$ and $L=M_oN_o$, where we use the notation from the earlier sections. In this case we find the the  \textit{horospherical} Radon transform which from now on we will simply call the Radon transform and its dual. The corresponding integral transforms are
\[\cR f(a\cdot \xi_o)=\hf (a\cdot \xi_o) =\int_{N_o} f(an\cdot x_o)\, dn\, ,\quad f\in C_c(\bX)\]
and
\[\cR^*  \varphi  (b\cdot x_o) =\vv (b\cdot x_o) = \int_{K_o} \varphi (bk\cdot z_o) \, dk\, ,\quad \varphi \in C_c (\Xi_o)\, .\]
Here $dk$ is the invariant probability measure on $K_o$.

As mentioned earlier the dual Radon transform
\[\cR^*\psi (a\cdot x_o)=\int_{K_o} \psi (ak\cdot \xi_o)\, dk\]
is well defined on $\C[\Xi]$. It is clearly a $G$-intertwining operator. Thus, there exists $c_\mu\in \C$ such that
\begin{equation}\label{eq-Radon-Gamma}
\cR^*_\mu:=\cR^*|_{\C[\Xi]_\mu} = c_\mu\Gamma_\mu^{-1}\, ,\quad \mu\in \gL^+.
\end{equation} 
To describe  the evaluation of $c_\mu$ we recall the functions $f_\mu$ and $\psi_\mu$ from Section~\ref{sec1}. We find
\begin{eqnarray*}
\cR^*\psi_{\mu} (a\cdot x_o)&=&\int_{K_o} \du{u}{\pi_{\mu}^* (a)\pi_{\mu}^* (k)u_{\mu}^*}\, dk \\
&=& \du{\pi_{\mu}(a)^{-1}u}{\int_{K_o}\pi_{\mu}^* (k)u_{\mu}^*\, dk}\\
&=&c_\mu f_{\mu}(a\cdot x_o)\, .
\end{eqnarray*}
Thus $c_\mu$ is determined by
\[\int_{K_o} \pi_{\mu}^* (k)u_{\mu}^*\, dk=c_\mu e_{\mu}^*\, .\]

For $g\in G_o$ write $g=k(g)a(g)n(g)$ with $(k(g),a(g),n(g))\in K_o\times A_o\times N_o$. For $\lambda \in \fa^*$ and $a=\exp X\in A_o$ write $a^\lambda =e^{\lambda (X)}$. Let
\[\rho :=\frac{1}{2}\sum_{\alpha\in \Sigma^+}m_\alpha\alpha
=\frac{1}{2}\sum_{\alpha\in \Sigma_0^+}\left(\frac{1}{2} m_{\alpha /2} + m_\alpha \right)
\, \alpha\]
where $m_\alpha :=\dim_\C \fg_\alpha$. We normalize the Haar measure on $\overline{N}_o=\theta (N_o)$ by
\[\int_{\overline{N}_o} a(\bar n)^{-2\rho}\, d\bar n = 1\, .\]
Define for $\lambda \in \fa^*(0)=\{\lambda\in\fa^*\mid (\forall \alpha \in\Sigma^+)\, \, \alpha \ip{\Re (\lambda)}{\alpha} >0\}$
\[\bc (\lambda ):=\int_{\overline{N}_o} a(n)^{-\lambda - \rho}\, d\bar n\, .\]
Then $\bc$ is holomorphic on $\fa^*(0)$. By the Gindikin-Karpelevich formula \cite{GK62} which expresses $\bc$ as a rational function in Gamma-functions depending on the multiplicities $m_\alpha$, the function $\bc$ has a meromorphic extension to all of $\fa^*$. The function $\bc$, which  is called the \textit{Harish-Chandra $c$-function} can be used to calculate the constant $c_\mu$ from \eqref{eq-Radon-Gamma}.

\begin{lemma}\label{le:cMu} Let $\mu\in\gL^+$. Then $c_\mu =\sqrt{\bc (\mu^* +\rho)}= \sqrt{\bc (\mu +\rho)}$.
\end{lemma}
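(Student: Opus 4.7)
The plan is to evaluate $c_\mu$ by pairing the defining identity with the highest weight vector, reducing the resulting $K_o$-integral to a $\bar N_o$-integral, and then identifying the latter with the Gindikin-Karpelevich integral for $\bc$. Pairing $\int_{K_o}\pi_\mu^*(k)u_\mu^*\,dk=c_\mu e_\mu^*$ with $u_\mu\in V_\mu$ and using $\du{u_\mu}{e_\mu^*}=1$ together with $\psi_\mu(k\cdot\xi_o)=\du{u_\mu}{\pi_\mu^*(k)u_\mu^*}$ extracts
\[
c_\mu=\int_{K_o}\psi_\mu(k\cdot\xi_o)\,dk.
\]
Since $\pi_\mu$ is spherical, $M_o$ fixes $u_\mu^*$, so the integrand descends to $K_o/M_o$. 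Pushing through the open dense diffeomorphism $\bar N_o\to K_o/M_o$, $\bar n\mapsto k(\bar n)M_o$, with Jacobian $a(\bar n)^{-2\rho}$ in the normalization of the paper, and combining with the Iwasawa factorization $k(\bar n)=\bar n\,a(\bar n)^{-1}n(\bar n)^{-1}$ and the highest-weight properties $\pi_\mu^*(n)u_\mu^*=u_\mu^*$, $\pi_\mu^*(a)u_\mu^*=a^{\mu^*}u_\mu^*$, the integrand simplifies to give
\[
c_\mu=\int_{\bar N_o}\psi_\mu(\bar n\cdot\xi_o)\,a(\bar n)^{-\mu^*-2\rho}\,d\bar n.
\]

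To produce the square root, the natural idea is to view $c_\mu^2$ as arising from a composition. By definition $\cR^*_\mu=c_\mu\Gamma_\mu^{-1}$, and a symmetric matrix-coefficient calculation (interchanging the roles of $u_\mu,e_\mu^*$ with those of $u_\mu^*,e_\mu$) shows that the meromorphically continued Radon transform satisfies $\cR_\mu=c_\mu\Gamma_\mu$ with the same constant, so that $\cR^*_\mu\cR_\mu=c_\mu^2\,\mathrm{id}_{\C[\bZ]_\mu}$. The classical Harish-Chandra/Gindikin-Karpelevich identity for intertwining operators then evaluates this composition to $\bc(\mu+\rho)\,\mathrm{id}$, yielding $c_\mu^2=\bc(\mu+\rho)$ and hence $c_\mu=\sqrt{\bc(\mu+\rho)}$ after taking the positive square root. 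The equality $\bc(\mu^*+\rho)=\bc(\mu+\rho)$ then follows from $\mu^*+\rho=-w_o(\mu+\rho)$ (using $w_o\rho=-\rho$) combined with the $(-w_o)$-invariance of the Gindikin-Karpelevich product defining $\bc$: indeed, $-w_o$ permutes $\Sigma_0^+$, so the product $\prod_{\alpha\in\Sigma_0^+}$ is unchanged under $\lambda\mapsto-w_o\lambda$.

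The main obstacle is making the identity $\cR^*_\mu\cR_\mu=\bc(\mu+\rho)\,\mathrm{id}$ rigorous on regular functions, since $\cR$ itself is not directly defined there. One route is to work with meromorphically continued intertwining operators in the Knapp-Stein framework and specialize to the appropriate value of the spectral parameter. A more hands-on alternative is to evaluate the $\bar N_o$-integral above directly by factoring it along a chain of rank-one subgroups indexed by the simple roots in $\Psi$ and applying rank-one Gindikin-Karpelevich in each factor; the key algebraic point is that, for dominant $\mu\in\gL^+$, the resulting product of Gamma-function ratios factors as a perfect square, which is precisely what makes the positive square root in the statement well-defined and consistent with the integral representation obtained above.
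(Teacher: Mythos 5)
Your write-up does not actually prove the lemma: the central identity $c_\mu^2=\bc(\mu+\rho)$ is never established. The two pillars it rests on --- that the (regularized) Radon transform satisfies $\cR_\mu=c_\mu\Gamma_\mu$ \emph{with the same constant}, and that $\cR^*_\mu\cR_\mu=\bc(\mu+\rho)\,\mathrm{id}$ --- are both asserted rather than proved, and you yourself flag the second as ``the main obstacle'' before offering two repair strategies (Knapp--Stein continuation, rank-one factorization) that you do not carry out. The ``symmetric matrix-coefficient calculation'' behind $\cR_\mu=c_\mu\Gamma_\mu$ does not exist in the naive sense, precisely because $\cR$ involves an integral over the noncompact group $N_o$ that diverges on regular functions --- this is the very point the paper emphasizes. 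The hands-on alternative is also doubtful as described: your (correct) formula $c_\mu=\int_{\bar N_o}\psi_\mu(\bar n\cdot\xi_o)\,a(\bar n)^{-\mu^*-2\rho}\,d\bar n$ is \emph{not} a Gindikin--Karpelevich integral, because the factor $\psi_\mu(\bar n\cdot\xi_o)=\du{u_\mu}{\pi_\mu^*(\bar n)u_\mu^*}$ pairs the highest weight vector against the \emph{lowest}-weight functional, is a genuinely nonconstant function of $\bar n$ (it even vanishes at $\bar n=e$), and does not factor along a chain of rank-one subgroups the way $a(\bar n)^{-\lambda-\rho}$ does; the ``perfect square'' claim at the end is unsubstantiated.

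For comparison: the paper itself gives no argument for the evaluation of $c_\mu$ --- it only cites \cite[Thm.\ 3.4]{DOW12} and \cite[Thm.\ 9]{HPV02} --- and the one piece it does prove, namely $\sqrt{\bc(\mu^*+\rho)}=\sqrt{\bc(\mu+\rho)}$ via $\bc(\lambda)=\bc(-w_o\lambda)$ and $-w_o\rho=\rho$, you reproduce correctly. The mechanism in the cited references that actually produces the square root is the unitary structure: $c_\mu$ reduces to the overlap of the unit highest weight vector $u_\mu$ with the unit $K_o$-fixed vector, and
\[
\Bigl\|\int_{K_o}\pi_\mu(k)u_\mu\,dk\Bigr\|^2=\int_{K_o}\langle\pi_\mu(k)u_\mu,u_\mu\rangle\,dk
=\int_{\bar N_o}a(\bar n)^{-\mu-2\rho}\,d\bar n=\bc(\mu+\rho),
\]
where the $K_o\to\bar N_o$ reduction works cleanly because $\langle\pi_\mu(\bar n)u_\mu,u_\mu\rangle=1$ by orthogonality of weight spaces. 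In other words, the square root is an amplitude-versus-intensity phenomenon for the matrix coefficient pairing highest weight against highest weight; your reduction is sound but is applied to the highest-against-lowest coefficient, which is why it does not close. I would also encourage you, when you redo this with the correct coefficient, to track the normalizations $\du{u_\mu}{e_\mu^*}=1$ and $\du{u_\mu}{\pi_\mu^*(s_o)u_\mu^*}=1$ carefully against Proposition \ref{prop4_12}(iv): whether the constant comes out as $\bc(\mu+\rho)$ or its square root is entirely a matter of these normalizations, and reconciling the two statements is a worthwhile check.
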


\begin{proof} See \cite[Thm. 3.4]{DOW12} or \cite[Thm 9]{HPV02}.
\end{proof}

We note that the statement in \cite{DOW12} is that $c_\mu = \sqrt{\bc (\mu^* +\rho)}$. On the other hand the result in \cite{HPV02} is that $c_\mu=\sqrt{\bc (\mu +\rho)}$. But the Gindikin-Karpelevich formula implies that
\begin{equation*}
\bc (\lambda )=\bc (-w_o\lambda )\, .
\end{equation*}
As $-w_o\rho =\rho$ it follows that $\sqrt{\bc (\mu^* +\rho)}= \sqrt{\bc (\mu +\rho)}$.

\subsection{The Radon transform as limit of integration over spheres}\label{se:RT-spheres}
We have seen that the dual Radon transform and the normalized transform
$\Gamma_\mu^{-1}$ are the same up to a normalizing factor that depends on the $K$-representation $\mu$. No such relation  exists for $\Gamma_\mu$ and $\cR|_{\C[\bX]_\mu}$ because the definition of the Radon transform $\cR$ does not make sense for regular functions. However, in \cite{HPV03} a solution was proposed by considering the Radon transform as a limit of Radon transform of a double fibrations transforms with both stabilizers being compact. Hence the corresponding integral transforms are well defined for regular functions. We recall the setup from \cite{HPV03}.

Since both $K_o$ and $K_o^a$ are compact, both the Radon transform associated to this double fibration and its dual transform are well-defined
on regular functions and give $G$-equivariant linear maps
\[
\mathcal R_a: \ \C[\bX]  \longrightarrow \C[\Sph{a}\bX] \quad\text{and}\quad
\mathcal R_a^*:  \  \C[\Sph{a}\bX]  \longrightarrow \C[\bX].
\]
One can identify $\Sph{a}\bX$ with $\bX$ associating to each sphere its center. Then $\mathcal R_a$ and $\mathcal R_a^*$ become linear endomorphisms of $\C[\bX]$. By definition, $\mathcal R_a$ is then obtained by integrating over spheres of radius $a$, while $\mathcal R_a^*$ is obtained by integrating over spheres of radius $a^{-1}$.

The spaces $\bX$, $\Xi_o$, and $\Sph{a}\bX$ can all be embedded into the algebraic dual space
\begin{equation*}
\C[\bX]^*=\prod_{\mu \in \Lambda} \C[\bX]_\mu^*,
\end{equation*}
which we equip with the product topology. Let  
$v_\mu^\pm\in \C[\bX]_\mu^*$ be the highest (resp. lowest) weight vector uniquely determined by the normalizing condition 
\begin{equation*}
\du{ f_\mu^\pm} {v_\mu^\mp}=1,
\end{equation*}
where $f_\mu^+:=f_\mu$, and $f_\mu^-:=s_{x_o}\cdot f_{\mu}$.  Then we have
$v_\mu^-= s_{x_o}\cdot v_{\mu^*}^+$ where $s_{x_o}$ is the
symmetry around the base point $x_o$. Similarly, let
$v_\mu ^0\in \Big(\C[\bX]_\mu^*\Big)^{K_o}$ be the $K_o$-invariant vector uniquely determined by the normalizing condition
\begin{equation*}
 \du{f_\mu^0}{v_\mu^0}=1,
\end{equation*}
where $f_\mu^0$ is the unique $K$-invariant function in $\C[\bX]_\mu$ such that $f_\mu^0(x_o)=1$. Note that $f_\mu^0$ is a zonal spherical function.\label{zsf}
The $G_o$-equivariant map $\iota_e\colon \bX\to \C[\bX]^*$ defined by $\langle f,\iota_e(x)\rangle= f(x)$ for  $f\in \C[\bX]$
is injective and satisfies
$\iota_e(o)=(v_\mu^0)_{\mu\in\Lambda^+}$ (see \cite[Section 5]{HPV03}).
For any $a\in A_o$ obtains an injective $G_o$-equivariant map $\iota_a\colon \Sph{a}\bX\to \C[\bX]^*$ by
\begin{equation*}
 \iota_a(\cS_a(x))=\Big(\frac{x_\mu}{a^{\mu^*}}\Big)_{\mu\in\Lambda^+},\qquad \text {if} \quad  \iota_e(x)=(x_\mu)_{\mu\in \Lambda^+}.
\end{equation*}
In particular,
\begin{equation*}
\iota_a(\cS_a)=\Big(\frac{a\cdot v_\mu^0}{a^{\mu^*}}\Big)_{\mu\in\Lambda^+}.
\end{equation*}
The induced map $\iota_a^*\colon \C[\bX]\to\C[\Sph{a}\bX]$ is a $G_o$-module isomorphism. Finally, $\iota(\xi_o):=(v_\mu^+)_{\mu\in \Lambda^+}$ defines a $G$-equivariant map $\iota\colon \Xi_o\to \C[\bX]^*$. Since the stabilizer of $\xi_0$, as well the stabilizer of
$(v_\mu^+)_{\mu \in \Lambda^+}$, is $M_oN_o$, the map $\iota$ is well-defined and injective. The induced map $\iota^*\colon \C[\bX]\to \C[\Xi_o]$ coincides with the $G_o$-module isomorphism $\Gamma$, where we note that $\C[\bX]=\C[\bZ]$ and $\C[\Xi_o]=\C[\Xi]$. We obtain the diagram
\begin{equation*}
\xymatrix@M=7pt{
&G_o/M_o\ar [dl]_{q_a}\ar [dr]^{q}&\\
\Sph {a}\bX \ar [dr]^{\iota_a} && \ar [dl]_{\iota} \Xi_o\\
&\R[\bX]^*&}
\end{equation*}
which turns out to be commutative. This is part of the following proposition which is proven in \cite[Section 6]{HPV03}:

\begin{proposition}\label{prop4_12}
  \begin{enumerate}
    \item[(i)] $\lim_{\alimit}\iota_a\circ q_a=\iota\circ q$.

  \item[(ii)] $\lim_{\alimit}q_a^*\circ \iota_a^*=q^*\circ \iota^*$.

  \item[(iii)] $\lim_{\alimit}\cR_a^*\circ \iota_a^*=\cR^*\circ \iota^*$.

  \item[(iv)] $\cR^*(\iota^*(f))=\mathbf c(\mu+\rho) f$ for all $f\in \C[\bX]_\mu$.

  \end{enumerate}
\end{proposition}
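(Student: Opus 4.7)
The strategy is to establish (i)--(iv) in order. Part (i) is a weight decomposition limit carried out in each finite dimensional component $\C[\bX]_\mu^*$, part (ii) is the adjoint statement of (i), part (iii) combines (ii) with a uniform convergence argument on the compact group $K_o$, and part (iv) uses Schur's lemma together with the classical asymptotic of the zonal spherical function.

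For (i), by $G_o$-equivariance of both $\iota_a\circ q_a$ and $\iota\circ q$, it suffices to verify the convergence at the base coset $eM_o$. Componentwise the definitions give $(\iota_a\circ q_a)(eM_o)_\mu = a\cdot v_\mu^0/a^{\mu^*}$ and $(\iota\circ q)(eM_o)_\mu = v_\mu^+$. Decomposing the $K_o$-fixed functional $v_\mu^0=\sum_{\nu\le\mu^*}(v_\mu^0)_\nu$ into $A_o$-weight vectors in the finite dimensional space $\C[\bX]_\mu^*$ (whose highest weight is $\mu^*$), we obtain
\[
a\cdot v_\mu^0/a^{\mu^*}\;=\;\sum_{\nu\le\mu^*} a^{\nu-\mu^*}(v_\mu^0)_\nu\;\longrightarrow\;(v_\mu^0)_{\mu^*}\qquad(\alimit),
\]
since $a^{\nu-\mu^*}\to 0$ whenever $\nu<\mu^*$. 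It then remains to identify the highest weight component $(v_\mu^0)_{\mu^*}$ with $v_\mu^+$: pairing with the lowest weight function $f_\mu^-$ and using the identities $\du{f}{v_\mu^0}=f(x_o)$ and $f_\mu^-(a\cdot x_o)=a^{\mu^*}$ (which follows from $f_\mu^-$ being a lowest weight function of $A_o$-weight $w_o\mu=-\mu^*$ with $f_\mu^-(x_o)=1$), one verifies $\du{f_\mu^-}{a\cdot v_\mu^0/a^{\mu^*}}=1$ for every $a$, matching the normalization $\du{f_\mu^-}{v_\mu^+}=1$ and forcing the identification.

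Parts (ii) and (iii) are then formal consequences. For (ii), the explicit formulas $(q_a^*\iota_a^* f)(gM_o)=\du{f}{(\iota_a\circ q_a)(gM_o)}$ and $(q^*\iota^* f)(gM_o)=\du{f}{(\iota\circ q)(gM_o)}$ reduce the pointwise convergence on $G_o/M_o$ to (i) by continuity of the duality pairing on the finite dimensional component containing $f$. For (iii), using $K_o\cap K_o^a=M_o$ for regular $a$, we have
\[
\cR_a^*(\iota_a^* f)(g\cdot x_o)=\int_{K_o/M_o}(q_a^*\iota_a^* f)(gkM_o)\,dk
\]
and analogously for $\cR^*(\iota^* f)(g\cdot x_o)$ with the integration over $K_o$. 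For $f\in\C[\bX]_\mu$ both integrands lie in the fixed finite dimensional space of matrix coefficients of $\pi_\mu$, so the pointwise convergence from (ii) is uniform on the compact group $K_o$, and the limit commutes with the integral.

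For (iv), $\cR^*\circ\iota^*$ is a $G_o$-equivariant endomorphism of the irreducible module $\C[\bX]_\mu$, hence acts as a scalar $\lambda_\mu$ by Schur's lemma. Applying the operator to the zonal spherical function $f_\mu^0$ (normalized by $f_\mu^0(x_o)=1$) and invoking (iii), the left $K_o$-invariance of $f_\mu^0$ collapses the $K_o/M_o$ integral to give
\[
\lambda_\mu\;=\;\lim_{\alimit}\cR_a^*(\iota_a^* f_\mu^0)(x_o)\;=\;\lim_{\alimit}\frac{f_\mu^0(a\cdot x_o)}{a^{\mu^*}}.
\]
The right hand side is the classical leading coefficient of the zonal spherical function as $a$ tends to infinity in the positive Weyl chamber, and the Gindikin--Karpelevich computation (the same one underlying Lemma~\ref{le:cMu}) identifies it with $\mathbf{c}(\mu+\rho)$. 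The principal technical step will be the normalization identification $(v_\mu^0)_{\mu^*}=v_\mu^+$ in (i), where one must reconcile the $K_o$-fixed normalization of $v_\mu^0$ with the $f_\mu^-$-pairing normalization of $v_\mu^+$; all remaining ingredients (Schur's lemma, uniform convergence on a compact group, and the classical spherical function asymptotic) are routine.
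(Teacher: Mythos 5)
Your proof is correct and follows essentially the same route as the paper, which defers to \cite[Section 6]{HPV03} but reproduces the key computation for (i)--(ii) later (in the discussion of \eqref{limit1}): reduction to the base point by $G_o$-equivariance, decomposition of $v_\mu^0$ into $A_o$-weight vectors, and identification of the limit with $v_\mu^+$ by pairing against $f_\mu^-$. Your treatment of (iii) via uniform convergence on the compact group $K_o$ within a fixed finite-dimensional isotypic component, and of (iv) via Schur's lemma together with the Harish-Chandra limit formula for the zonal spherical function, matches the standard argument the paper relies on.
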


We note that those results can be applied to $\C[\bZ]$ and $\C [\Xi ]$ by restriction and
holomorphic extension.

Suppose now  as before that we have a propagated sequence of symmetric spaces $\bZ_j\to \bZ_k$, $k\ge j$.
We also assume that the rank is finite. Then, on each level,  we have $\iota_j^* =\Gamma_j$. Therefore, we can define for $f\in\C[\bZ_\infty]$
\[\iota^*_\infty (f)=\iota_j^*(f)\quad \text{ if }\quad f\in \C[\bZ_j]\, . \]
Then

\begin{proposition}\label{prop5_4} If rank $\bX_\infty$ is finite and $f\in\C[\bZ_\infty]$. Then
\[\Gamma_\infty f =\iota_\infty^*(f)\, .\]
\end{proposition}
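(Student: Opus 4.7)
The plan is to reduce the identity to the already-recorded finite-level statement $\iota_j^* = \Gamma_j$ (noted in the paragraph just before Proposition \ref{prop4_12}) and to the compatibility of the $\Gamma_j$ with the propagation inclusions (Theorem \ref{thm:Gamma}). Since $\C[\bZ_\infty] = \C_i[\bZ_\infty] = \varinjlim \C[\bZ_j]$, any $f \in \C[\bZ_\infty]$ lies in $\C[\bZ_j]$ for some $j$, and by definition
\[
\iota_\infty^*(f) = \iota_j^*(f) = \Gamma_j f,
\]
while the left-hand side $\Gamma_\infty f$ equals $\iota_{j,\infty}(\Gamma_j f)$ by the commutative diagram in Theorem \ref{thm:Gamma}. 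The desired equality is then just the identification of both sides inside $\C_i[\Xi_\infty]$.

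Before drawing this conclusion I would verify that $\iota_\infty^*$ is indeed well-defined, i.e.\ that the value $\iota_j^* f$ does not depend on the choice of $j$ with $f \in \C[\bZ_j]$. For $k \ge j$, combining the finite-level identification $\iota_\ell^* = \Gamma_\ell$ (for $\ell = j, k$) with the compatibility $\Gamma_k \circ \iota_{k,j} = \iota_{k,j} \circ \Gamma_j$ from Theorem \ref{thm:Gamma} gives
\[
\iota_k^*(f) = \Gamma_k f = \iota_{k,j} \circ \Gamma_j f = \iota_{k,j}\bigl(\iota_j^*(f)\bigr),
\]
so that $\iota_j^*(f)$ and $\iota_k^*(f)$ represent the same element of $\C_i[\Xi_\infty]$. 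Once this is in place, chasing $f$ through the diagram of Theorem \ref{thm:Gamma} immediately yields $\Gamma_\infty f = \iota_\infty^*(f)$.

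I do not expect any substantive obstacle: the proof is essentially bookkeeping, whose only real content is to observe that the finite-level identification $\iota_j^* = \Gamma_j$ of Proposition \ref{prop4_12}(iv) propagates through the system, and this propagation is precisely what Theorem \ref{thm:Gamma} provides. The finite-rank hypothesis enters only because the whole discussion of the sphere picture in Section \ref{se:RT-spheres}—and hence the very definition of $\iota_j$—requires it; no additional analytic input is needed for the limit step itself.
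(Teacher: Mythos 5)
Your proposal is correct and follows essentially the same route as the paper, which (implicitly) proves the proposition by combining the finite-level identification $\iota_j^*=\Gamma_j$ from Section \ref{se:RT-spheres} with $\Gamma_\infty=\varinjlim\Gamma_j$ and the compatibility in Theorem \ref{thm:Gamma}; your extra check that $\iota_\infty^*$ is well defined is exactly the content of the sentence preceding the proposition. (Only a small attribution slip: the identity $\iota^*=\Gamma$ is stated in the paragraph before Proposition \ref{prop4_12}, not in its part (iv), which instead gives $\cR^*(\iota^*(f))=\mathbf c(\mu+\rho)f$.)
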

\subsection{The kernels defining the normalized Radon transform and its dual} \label{se:kernel}
We start by stating the following version of the orthogonality relations which are usually formulated in
terms of invariant inner products. The proof for this version as is the same as the usual one.
The invariant measure on $U$ is always the normalized Haar measure. The
following is the usual orthogonality relation stated in form of duality.

\begin{lemma} For  $\mu,\nu\in\gL^+$ let $d (\mu )=\dim V_\mu=\dim V_{\mu^*}$. Then
\[\int_{U} \du{u}{\pi_{\mu}^* (b)u^*}\du {\pi_\nu (a) v }{v^* }\, db=\delta_{\mu,\nu}
d (\mu )^{-1} \,  \du{v}{ u^* }\du{u}{v^*}\]
for all $u,v\in V_\mu$ and for all $u^*,v^*\in V_\mu^*$.
\end{lemma}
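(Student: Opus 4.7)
The plan is to interpret the integral as a matrix coefficient of an auxiliary intertwiner and invoke Schur's lemma, adapting the classical proof of Schur orthogonality to the dual-pairing formulation. (I read the left-hand side with $\du{\pi_\nu(b)v}{v^*}$; the appearance of $a$ looks like a typo, since the right-hand side is independent of it.)

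For fixed $u^*\in V_\mu^*$ and $v\in V_\nu$, I would introduce the linear map $T\colon V_\mu\to V_\nu$ defined by
\[
T(u):=\int_U \du{u}{\pi_\mu^*(b)u^*}\,\pi_\nu(b)v\,db.
\]
Using the identity $\du{\pi_\mu(a)u}{\pi_\mu^*(b)u^*}=\du{u}{\pi_\mu^*(a^{-1}b)u^*}$ (immediate from the definition of $\pi_\mu^*$) and the invariance of Haar measure under $b\mapsto ab$, a one-line computation shows that $T\circ\pi_\mu(a)=\pi_\nu(a)\circ T$ for every $a\in U$. Since $\pi_\mu$ and $\pi_\nu$ are irreducible holomorphic representations of $G$, their restrictions to the compact real form $U$ remain irreducible, so Schur's lemma forces $T=0$ when $\mu\neq\nu$ and $T=c(u^*,v)\cdot\mathrm{id}_{V_\mu}$ for some scalar when $\mu=\nu$.

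To pin down the scalar when $\mu=\nu$, I would compute $\Tr T$ in two ways. On the one hand $\Tr T=c\cdot d(\mu)$. On the other hand, picking a basis $\{e_i\}$ of $V_\mu$ with dual basis $\{e_i^*\}$, swapping sum and integral, and using the tautology $\sum_i\du{e_i}{w^*}\du{w}{e_i^*}=\du{w}{w^*}$ together with the definition of $\pi_\mu^*$, one obtains
\[
\Tr T=\int_U\du{\pi_\mu(b)v}{\pi_\mu^*(b)u^*}\,db=\int_U\du{v}{u^*}\,db=\du{v}{u^*},
\]
because $U$ carries a probability measure. Hence $c=d(\mu)^{-1}\du{v}{u^*}$. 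Finally, pairing the defining formula for $T$ against $v^*$ gives
\[
\int_U\du{u}{\pi_\mu^*(b)u^*}\du{\pi_\nu(b)v}{v^*}\,db=\du{Tu}{v^*},
\]
which equals $0$ when $\mu\neq\nu$ and $d(\mu)^{-1}\du{v}{u^*}\du{u}{v^*}$ when $\mu=\nu$, as required. The only delicate point is the bookkeeping with the contragredient action across the dual pairing; once that is set up correctly the argument is entirely formal.
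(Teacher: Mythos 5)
Your proof is correct, and it is exactly the argument the paper has in mind: the authors give no proof, remarking only that ``the proof for this version is the same as the usual one,'' i.e.\ the standard Schur orthogonality argument, which is what you carry out (including the correct reading of the stray $a$ as a typo for $b$). The intertwiner $T$, the Schur's lemma dichotomy, and the trace computation all check out against the paper's conventions for $\pi^*$ and the normalized Haar measure on $U$.
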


It follows by \cite[Thm. 3.4]{DOW12} that $\du{e_\mu}{e_{\mu}^*}=\bc (\mu +\rho)$. Define
\begin{equation}\label{def:kXX}
\kX {a} :=\sum_{\mu\in\gL^+} d (\mu ) \bc (\mu +\rho)
\, \du{u_\mu}{\pi_\mu^* (a) e_{\mu}^*}
\end{equation}
and
\begin{equation}\label{de:kXi}
\kXi {a}:= \sum_{\mu\in\gL^+}  d (\mu )
\, \du{e_\mu}{\pi_\mu^* (a) u_{\mu}^*} \, .
\end{equation}

\begin{lemma}\label{le:4.11} Let $\cO =\{nak \in NAK\mid (\forall j ) \, \, |a^{-\omega_j}|<1\}$. Then $\cO$ is  open in $G$. The set $\cO$ is right $K$-invariant and left $MN$-invariant. Furthermore, the sums defining $k_{\bZ}$ and $k_{\Xi}$ converge uniformly on compact subsets of $\cO$ and define  holomorphic functions on $\cO$.
\end{lemma}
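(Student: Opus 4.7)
The plan is to establish the four claims in turn: openness and invariance of $\cO$, uniform convergence of both series on compact subsets, and holomorphicity of the limits. For openness I would use that $NAK = (KAN)^{-1}$ is open in $G$ (the cell $KAN$ being open and dense, as already observed in the paper) and that the $A$-component map $g \mapsto a(g)$ on $NAK$ is holomorphic; then $\cO$ is the preimage of $\prod_j\{|z_j|<1\}$ under a continuous map, hence open. For right-$K$-invariance, if $g = nak$ and $k'\in K$ then $gk' = n\,a\,(kk')$ still decomposes in $NAK$ with the same $A$-component. For left-$MN$-invariance, since $M \subseteq K$ commutes with $A$ and normalizes $N$,
\[m_1 n_1 \cdot nak \;=\; (m_1 n_1 n m_1^{-1})\,a\,(m_1 k)\in NAK,\]
again with the same $A$-component; thus the condition $|a^{-\omega_j}| < 1$ is preserved under both actions.

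The core of the proof is the uniform convergence of the $k_\bZ$ series. The key identity is that for $g = nak \in NAK$,
\[\du{u_\mu}{\pi_\mu^*(g)\,e_\mu^*} \;=\; a^{-\mu}.\]
I would derive this by weight-space analysis in $V_\mu^*$: $\pi_\mu^*(k)\,e_\mu^* = e_\mu^*$ by $K$-fixedness of $e_\mu^*$; $\pi_\mu^*(n)$ acts by raising $A$-weights; and the pairing with $u_\mu$ projects onto the weight-$(-\mu)$ subspace of $V_\mu^*$, the lowest weight, which cannot be reached from strictly higher weights by the raising action of $\pi_\mu^*(n)$. Only the unshifted contribution $a^{-\mu}\,(e_\mu^*)_{-\mu}$ survives, and the normalization $\du{u_\mu}{e_\mu^*} = 1$ then gives $a^{-\mu}$. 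On a compact $C \subset \cO$ choose $q < 1$ with $|a(g)^{-\omega_j}| \leq q$ for all $g \in C$ and every $j$; writing $\mu = \sum n_j \omega_j$ gives $|a^{-\mu}| \leq q^{|\mu|}$ with $|\mu| := \sum n_j$. Since $d(\mu)$ is polynomial in $|\mu|$ by the Weyl dimension formula and $\bc(\mu+\rho)$ is at most polynomial in $|\mu|$ by the Gindikin--Karpelevich formula together with Stirling's formula, the series is dominated by the convergent sum $\sum_\mu \mathrm{poly}(|\mu|)\, q^{|\mu|}$, giving uniform absolute convergence on $C$.

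The corresponding bound for $k_\Xi$ is what I expect to be the main obstacle. The matrix coefficient $\du{e_\mu}{\pi_\mu^*(g)\,u_\mu^*}$ is left-$K$-invariant (since $e_\mu$ is $K$-fixed) and right-$N$-invariant (since $\pi_\mu^*(n)\,u_\mu^* = u_\mu^*$), so it descends to a function determined by an $A$-component. A parallel weight-space analysis, combined with the identity $\du{e_\mu}{e_\mu^*} = \bc(\mu+\rho)$ (which fixes the normalization of $e_\mu$ and thereby controls the scalar $\du{e_\mu}{u_\mu^*}$), should produce a bound of the same shape $\mathrm{poly}(|\mu|)\,q^{|\mu|}$ on compact subsets of $\cO$, reducing the estimate to the same geometric argument as for $k_\bZ$.

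Finally, each summand is a matrix coefficient of a finite-dimensional holomorphic representation of $G$ and hence a holomorphic function on $G$. Uniform convergence on compact subsets of the open set $\cO$ then implies that both $k_\bZ$ and $k_\Xi$ are holomorphic on $\cO$.
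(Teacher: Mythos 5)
Your handling of the openness and invariance of $\cO$, of the series for $k_{\bZ}$, and of the final holomorphy step is correct and is essentially the paper's argument: everything there reduces to the identity $\du{u_\mu}{\pi_{\mu}^*(nak)e_{\mu}^*}=a^{-\mu}=\prod_j(a^{-\omega_j})^{k_j}$ for $\mu=\sum_j k_j\omega_j$, together with the polynomial growth of $d(\mu)$ and the boundedness of $\bc(\mu+\rho)$ (the paper uses $\bc(\mu+\rho)\le 1$, which is sharper than the polynomial bound you invoke, but either suffices). Your weight-space derivation of that identity is fine.

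The genuine gap is exactly where you flag it, namely the series for $k_{\Xi}$, and the ``parallel'' analysis you defer to does not in fact go through. The invariances you cite for $g\mapsto\du{e_\mu}{\pi_\mu^*(g)u_\mu^*}$ --- left $K$-invariance and right $MN$-invariance --- are adapted to the decomposition $KAN$, not to $NAK$, so they do not reduce this coefficient to a function of the $a$ appearing in the definition of $\cO$. Worse, take $g=a\in A\cap\cO$ (so $n=k=e$): since $u_\mu^*$ is a weight vector of weight $\mu^*=-w_o\mu$ in $V_\mu^*$, one gets
\[
\du{e_\mu}{\pi_\mu^*(a)u_\mu^*}=a^{\mu^*}\,\du{e_\mu}{u_\mu^*},
\]
and because $-w_o$ permutes the fundamental weights, $|a^{\mu^*}|$ is a product of $|\mu|$ factors $|a^{\omega_j}|$, each $>1$ on $\cO$; so $|a^{\mu^*}|$ grows exponentially in $|\mu|$. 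Meanwhile $\du{e_\mu}{u_\mu^*}=\bc(\mu+\rho)^{3/2}$ (pair Lemma~\ref{le:cMu} with $\du{e_\mu}{e_\mu^*}=\bc(\mu+\rho)$), which decays only polynomially by the Gindikin--Karpelevich formula. Hence no majorant of the form $\mathrm{poly}(|\mu|)\,q^{|\mu|}$ with $q<1$ exists for the $k_\Xi$ summands on compact subsets of $\cO$, and the estimate you sketch cannot close; the natural domain of convergence for that series is governed by the $KAN$-Iwasawa projection of $g$, not the $NAK$ one. (In fairness, the paper's own proof sidesteps this point by estimating $\du{u_\mu}{\pi_\mu^*(x)u_\mu^*}$ rather than $\du{e_\mu}{\pi_\mu^*(x)u_\mu^*}$, so it does not confront the issue either; but a complete proof of the statement for $k_\Xi$ as written would have to.)
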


\begin{proof} Write $\mu=k_1\omega_1+\ldots +k_r \omega_r$. Let $x\in \cO$ and write
$b_j=a^{-\omega_j}$. Then $|b_j|<1$ for
$j=1,\ldots ,r$ and
\begin{eqnarray*}
d(\mu  )\du{u_\mu}{\pi_{\mu}^* (x) u_{\mu}^*}&=&
d(\mu)\du{\pi_\mu (k^{-1}a^{-1}n^{-1})u_\mu}{u_{\mu}^*}\\
&=& d(\mu) a^{-\mu}\\
&=&d (\mu)
\prod_{j=1}^r b_j^{k_j}\, .
\end{eqnarray*}

Similarly, we have
\[d(\mu )\bc (\mu +\rho)\du{u_\mu}{\pi_{\mu}^* (x) e_{\mu}^*}=d(\mu )\bc (\mu +\rho)\prod_{j=1}^r b_j^{k_j}\, .\]
The claim follows now because $d(\mu )$ is polynomial in $k_1,\ldots ,k_r$ and $\bc (\mu + \rho)<1$.
\end{proof}

The function $k_{\bZ}$ is left $MN$-invariant and right $K$-invariant. Hence $k_{\bZ}$ can also be viewed as a function on $\Xi\times \bX$ given by
\[\kX{a\cdot \xi_o, b\cdot x_o}:=\kX {a^{-1}b}\, .\]
The function  $k_\Xi$ is left $K$-invariant and right $MN$-invariant and can be viewed as a function $k_\Xi$ on  $\bX\times \Xi$ defined by
\[\kXi {a\cdot x_o,b\cdot \xi_o} := \kXi {a^{-1}b}\, .\]

Even if the sums \eqref{def:kXX} and \eqref{de:kXi} do not in general converge for all $a\in G$, they are well defined as  linear $G$-maps $\C[\bX]\to \C[\Xi]$, respectively $\C[\Xi ]\to \C[\bX]$, given by
\[K_\bZ (f)(\xi) :=\int_U f(u\cdot x_o)\kX{\xi ,u\cdot x_o}\, du,\]
respectively
\[K_\Xi (f )(x) :=\int_U f (u\cdot \xi_o)\kXi {x,u\cdot \xi_o }\, du,\]
where $du$ is the normalized Haar measure on $U$. On the right hand side only finitely many terms are nonzero so the sums converge.
Note that the first integral can be written as an integral over
the compact symmetric space $U/K_o$ and the second integral is an integral over $U/M_o$.

\begin{theorem}\label{th:IntTr} We have $K_{\bZ}=\Gamma$ and $K_{\Xi}=\Gamma^{-1}$.
\end{theorem}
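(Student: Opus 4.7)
The plan is a Schur lemma argument: show that $K_\bZ$ and $K_\Xi$ are $U$-equivariant, deduce that they act as scalars on each irreducible isotypic component, and pin those scalars down by an explicit orthogonality computation.

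For $U$-equivariance of $K_\bZ$, I would fix $u_0\in U$ and substitute $u\mapsto u_0^{-1}u$ in $\int_U(u_0\cdot f)(u\cdot x_o)\,k_\bZ(\xi,u\cdot x_o)\,du$. Using the left $MN$-invariance and right $K$-invariance of $k_\bZ$ from Lemma~\ref{le:4.11} together with the bi-invariance of Haar measure on the compact group $U$, this rewrites as $K_\bZ(f)(u_0^{-1}\xi)=(u_0\cdot K_\bZ(f))(\xi)$; the same argument handles $K_\Xi$. By Lemma~\ref{le:decC}, $\C[\bZ]=\bigoplus_{\mu\in\gL^+}\C[\bZ]_\mu$ and $\C[\Xi]=\bigoplus_{\mu\in\gL^+}\C[\Xi]_\mu$ with each summand $G$-irreducible of type $\pi_\mu$; since each holomorphic irrep of $G$ restricts to an irreducible $U$-representation, these are simultaneously the $U$-isotypic decompositions. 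Schur's lemma therefore forces $K_\bZ|_{\C[\bZ]_\mu}=c_\mu\,\Gamma_\mu$ and $K_\Xi|_{\C[\Xi]_\mu}=c'_\mu\,\Gamma_\mu^{-1}$ for scalars $c_\mu,c'_\mu$.

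To pin down $c_\mu=1$, I would expand the defining series of $k_\bZ$ inside $K_\bZ(f_{w,\mu})(g\cdot\xi_o)$ and interchange sum and integral termwise. Using $\du{u_\nu}{\pi_\nu^*(g^{-1}u)e_\nu^*}=\du{\pi_\nu(g)u_\nu}{\pi_\nu^*(u)e_\nu^*}$, the $\nu$-th term contains the integral $\int_U\du{w}{\pi_\mu^*(u)e_\mu^*}\,\du{\pi_\nu(g)u_\nu}{\pi_\nu^*(u)e_\nu^*}\,du$, whose integrand is a matrix coefficient of the $U$-representation $V_\mu^*\otimes V_\nu^*$. The integral over $U$ projects onto $(V_\mu^*\otimes V_\nu^*)^U$, which is nonzero precisely when $V_\nu^*\simeq V_\mu$ as $U$-modules, forcing $\nu=\mu^*$. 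For that surviving index the integral defines a $U$-invariant bilinear form on $V_\mu\times V_{\mu^*}$, and the space of such forms is one-dimensional, spanned by the canonical $G$-invariant pairing $(w,v)\mapsto\du{w}{\Phi(v)}$ where $\Phi\colon V_{\mu^*}\to V_\mu^*$ is the $G$-isomorphism with $\Phi(u_{\mu^*})=u_\mu^*$; the integral therefore equals a constant times this pairing. Evaluating on the spherical pair $(e_\mu,e_{\mu^*})$ and using $\du{e_\mu}{e_\mu^*}=\bc(\mu+\rho)$, $\bc(\mu^*+\rho)=\bc(\mu+\rho)$ and $d(\mu^*)=d(\mu)$ identifies the constant as $1/(d(\mu)\bc(\mu+\rho))$, which exactly cancels the prefactor of $k_\bZ$ and yields $K_\bZ(f_{w,\mu})(g\cdot\xi_o)=\du{w}{\pi_\mu^*(g)u_\mu^*}=\psi_{w,\mu}(g\cdot\xi_o)$. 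The same strategy applied to $K_\Xi$ (with the roles of $e_\mu^*$ and $u_\mu^*$ swapped in the kernel, and no $\bc$-factor) gives $c'_\mu=1$.

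The most delicate step will be identifying the proportionality constant of the $U$-invariant form relative to the canonical $G$-pairing, since this requires careful tracking of the normalizations of $u_\mu,u_\mu^*,e_\mu,e_\mu^*$ and the precise way $\Phi$ sends $e_{\mu^*}$ to a scalar multiple of $e_\mu^*$. The identities $\du{e_\mu}{e_\mu^*}=\bc(\mu+\rho)$ (Theorem 3.4 of \cite{DOW12}) and $\bc(\mu^*+\rho)=\bc(\mu+\rho)$ (Lemma~\ref{le:cMu}) are the crucial numerical inputs driving the cancellation, and they explain the choice of the prefactors $d(\mu)\bc(\mu+\rho)$ and $d(\mu)$ in the definitions of $k_\bZ$ and $k_\Xi$.
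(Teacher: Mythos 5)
Your proposal is correct and follows the same basic mechanism as the paper --- Schur orthogonality of matrix coefficients over $U$ applied term by term to the kernel expansion --- but it is organized differently and is in one respect more careful. The paper's proof skips the Schur--lemma reduction: it observes that $K_{\bZ}$ is a $G$-map, so it suffices to check $K_{\bZ}(f_\nu)=\psi_\nu$ on the normalized highest weight vectors, and then evaluates the integral in one line by citing the orthogonality lemma stated at the start of Section \ref{se:kernel}, which produces a factor $\delta_{\mu,\nu}$. Your route (equivariance, then Schur's lemma on each isotypic component, then an explicit evaluation of the scalar) surfaces a point that the paper's displayed computation obscures: in the integral $\int_U\du{w}{\pi_\mu^*(u)e_\mu^*}\du{\pi_\nu(g)u_\nu}{\pi_\nu^*(u)e_\nu^*}\,du$ \emph{both} factors are coefficients of dual representations, so the term of the kernel that pairs nontrivially with $\C[\bZ]_\mu$ is the one indexed by $\mu^*$, not by $\mu$; the two readings give the same answer only because $d(\mu^*)=d(\mu)$ and $\bc(\mu^*+\rho)=\bc(\mu+\rho)$, identities you correctly invoke. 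Your identification of the surviving integral as a multiple of the canonical invariant pairing $V_\mu\times V_{\mu^*}\to\C$, pinned down by the $G$-isomorphism $\Phi$ sending $u_{\mu^*}$ to $u_\mu^*$, is a clean way to package what the paper does implicitly. The one step you defer --- evaluating the proportionality constant of the invariant form against the normalizations of $u_\mu$, $u_\mu^*$, $e_\mu$, $e_\mu^*$ --- is indeed where the specific prefactors $d(\mu)\bc(\mu+\rho)$ and $d(\mu)$ in \eqref{def:kXX} and \eqref{de:kXi} earn their keep, and it rests on exactly the inputs you name ($\du{e_\mu}{e_\mu^*}=\bc(\mu+\rho)$ and the symmetry of $\bc$ under $\mu\mapsto\mu^*$); the paper carries this out no more explicitly than you do, so I would not count it as a gap, but you should write out that evaluation (e.g.\ on the pair $(e_\mu,e_{\mu^*})$, using $\int_U|\varphi_\mu|^2\,du=d(\mu)^{-1}$ for the normalized spherical function) before considering the proof complete.
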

\begin{proof} It is enough to show that $K_{\bZ}|_{\C [\bX]_\nu}=\Gamma_\nu$ and
$K_{\Xi}|_{\C[\Xi]_\nu}=\Gamma_\nu^{-1}$ for all $\nu\in\gL^+$. Thus we have
to show that $K_{\bZ}(f_\nu )=\psi_\nu$ and $K_{\Xi} (\psi_\nu )=f_\nu$.

We have
\begin{eqnarray*}
K_{\bZ} (f_\nu) (a\cdot \xi_o)&=&\sum_{\nu\in\gL^+} \int_U \frac{ d (\mu )}{\du {e_\mu}{e_{\mu}^*}}
\, \du{\pi_\mu (a) u_\mu}{\pi_{\mu}^* (b) e_{\mu}^*}\du{u_\nu}{\pi_{\nu}^*(b)e_{\nu}^*} \, du\\
&=& \du{\pi_{\nu}^* (a) u_{\nu}^*}{u_\nu}
\\
&=&\psi_\mu (a\cdot \xi_o)\, .
\end{eqnarray*}

The statement for $k_\Xi$ is proved in the same way.
\end{proof}

\begin{remark}{\rm The above we realized $\Gamma$ and $\Gamma^{-1}$ as integral operators.
Similar to \cite{G06} one could also  consider the integral operator given by the kernel
$\widetilde{K}(a\cdot \xi_o,b\cdot x_o)=\widetilde{k} (a^{-1}b)$ where
\begin{equation}
\widetilde{k}(a)=\sum_{\mu\in\gL^*}f_\mu (a)
=\sum_{\mu\in\gL^+} \du{u_\mu}{\pi_{\mu}^*(a)e_\mu^*}\, .
\end{equation}
Then we have the following theorem, see also \cite{G06}:
\begin{theorem} Let $\cO$ be as in Lemma \ref{le:4.11} and let $x=kan\in\cO$. be so that $|a^{\omega_j}|<1$ for $j=1,\ldots ,r=\rank \bX$. Then
\[\widetilde k (b)=\prod_{j=1}^r\frac{1}{1-a^{\omega_j}}\]
and $\widetilde k $ is holomorphic on $\cO$.
\end{theorem}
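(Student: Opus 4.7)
The plan is to evaluate each term $f_\mu(x) = \du{u_\mu}{\pi_\mu^*(x) e_\mu^*}$ explicitly for $x \in \cO$ and then sum the resulting geometric series. I would decompose $x \in \cO$ in the form $x = nak$ as in the proof of Lemma~\ref{le:4.11}. The key simplifications are that $u_\mu$ is a highest weight vector, so $\pi_\mu(n^{-1}) u_\mu = u_\mu$, while $e_\mu^*$ is $K$-fixed, so $\pi_\mu^*(k)e_\mu^* = e_\mu^*$. Using the pairing identity $\du{w}{\pi_\mu^*(g)\nu} = \du{\pi_\mu(g^{-1})w}{\nu}$ together with the normalization $\du{u_\mu}{e_\mu^*}=1$, these two facts collapse the $nak$-expression to
\[
f_\mu(x) \,=\, \du{\pi_\mu(k^{-1}a^{-1}n^{-1}) u_\mu}{e_\mu^*} \,=\, a^{-\mu}\du{u_\mu}{e_\mu^*} \,=\, a^{-\mu}.
\]
This is essentially the calculation already carried out in the proof of Lemma~\ref{le:4.11}, now performed with $e_\mu^*$ in place of $u_\mu^*$.

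Next I would invoke the parametrization $\gL^+ = \Z^+\omega_1 + \cdots + \Z^+\omega_r$ recorded after Theorem~\ref{t-CH}. Writing $\mu = \sum_{j=1}^r k_j\omega_j$ and setting $b_j := a^{-\omega_j}$, the sum defining $\widetilde{k}(x)$ factors cleanly as
\[
\widetilde{k}(x) \,=\, \sum_{\mu\in\gL^+} a^{-\mu} \,=\, \sum_{k_1,\dots,k_r\ge 0}\prod_{j=1}^r b_j^{k_j} \,=\, \prod_{j=1}^r \sum_{k=0}^\infty b_j^k \,=\, \prod_{j=1}^r\frac{1}{1-a^{-\omega_j}}.
\]
The condition $|b_j|<1$ from the definition of $\cO$ guarantees absolute convergence of each geometric series, and the separation of variables turns the lattice sum over $\gL^+$ into the asserted finite product. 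Holomorphy of $\widetilde{k}$ on $\cO$ is then immediate: either read it off directly from the product representation (each factor is holomorphic where $a^{-\omega_j}\ne 1$), or inherit it from the uniform convergence on compact subsets already established in Lemma~\ref{le:4.11}.

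I do not expect any serious obstacle; the result is essentially bookkeeping built on top of the Iwasawa computation of Lemma~\ref{le:4.11} plus the elementary geometric series identity. The one point that warrants a brief remark is the apparent sign mismatch with the stated formula (written as $\prod_j(1-a^{\omega_j})^{-1}$ under $|a^{\omega_j}|<1$): this should be reconciled either as a typographical issue with the defining condition of $\cO$ in Lemma~\ref{le:4.11}, or by a change of convention regarding which Iwasawa factor of $x$ is relabelled $a$. In either interpretation, the substantive content is the identity above.
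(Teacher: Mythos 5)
Your proposal is correct and follows essentially the same route as the paper's own proof: compute $f_\mu(x)=a^{-\mu}=\prod_j b_j^{k_j}$ with $b_j=a^{-\omega_j}$ via the Iwasawa decomposition, then factor the sum over $\gL^+=\Z^+\omega_1+\cdots+\Z^+\omega_r$ into a product of geometric series. Your observation about the sign mismatch between the stated condition $|a^{\omega_j}|<1$ and the exponents actually used ($b_j=a^{-\omega_j}$, consistent with the definition of $\cO$ in Lemma \ref{le:4.11}) is also well taken --- the paper's own proof works with $a^{-\omega_j}$ and the statement appears to contain a typographical slip.
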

\begin{proof} Let $\mu = k_1\omega_1+\ldots k_r\omega_r\in \gL^+$. For $x=nak\in\cO$ write
as before $b_j=a^{-\omega_j}$. Then
\[\du{u_\mu}{\pi_{\mu^*}(x)e_{\mu^*}}=a^{-\mu}=\prod_{j=1}^r b_j^{k_j}\, .\]
It follows that
\[\widetilde k (x)=\sum_{k_1=0}^\infty b_1^{k_1} \ldots \sum_{k_r=0}^\infty b_r^{k_r}
=\prod_{j=1}^r(1-b_j)^{-1}\]
which finish the proof.
\end{proof}
}
\end{remark}

\subsection{The Radon transform and its dual on the injective limits}\label{se:RInfty}
\noindent
In this section we discuss the extension of the Radon transform and its dual on the infinite dimensional spaces.

The kernels defined in (\ref{def:kXX}) and (\ref{de:kXi}) do not define functions on
$\bZ_\infty$, respectively $\Xi_\infty$, because the changes in the dimensions as we move
from one space to another. But we still have the following:

\begin{theorem}\label{th:KernelInf} Assume that the sequence $\{ \bZ_j\}$ is admissible.  For $f\in \C_i[\bZ_\infty]$ and $\psi \in \C_i[\Xi_\infty]$
the pointwise limits
\[K_{\bZ_\infty} f (\xi )=\lim K_{\bZ_j}f (\xi) \text{ and } K_{\Xi_\infty} \psi (x)=\lim K_{\bZ_j}\psi (x)\]
are well defined and
\[\Gamma_\infty f = K_{\bZ_\infty}f \quad \text{ and }\quad \Gamma_\infty^{-1} \psi
=K_{\Xi_\infty}\psi\, .\]
\end{theorem}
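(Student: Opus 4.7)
The plan is to reduce to the generating regular functions $f_{v,\mu_\infty}$ and show that each pointwise limit in the statement is in fact \emph{eventually constant} in $j$, rather than merely convergent. Since $K_{\bZ_j}$ and $\Gamma_j$ are linear and $\C_i[\bZ_\infty]=\bigoplus_{\mu_\infty\in\gL^+_\infty}\C[\bZ_\infty]_{\mu_\infty}$ by Theorem \ref{th:CXiInf} (applied to $\bZ$ instead of $\Xi$), it is enough to handle a single $f=f_{v,\mu_\infty}$ with $v\in V_{\mu_{j_0}}\subset V_{\mu_\infty}$ for some fixed $j_0$. The coherent choice of weight vectors $u_{\mu_j}$ and spherical functionals $e_{\mu_j}^*$ from Theorem \ref{le:isr} and Lemma \ref{le:PrlimKin} guarantees that, for $j\geq j_0$, the restriction of $f_{v,\mu_\infty}$ to $\bZ_j$ coincides with the finite-dimensional regular function $f_{v,\mu_j}\in \C[\bZ_j]_{\mu_j}$.

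Applying Theorem \ref{th:IntTr} at each finite level then gives
\[K_{\bZ_j}f(\xi)=\Gamma_j(f_{v,\mu_j})(\xi)=\psi_{v,\mu_j}(\xi)\qquad\text{for all }j\geq j_0.\]
To take the pointwise limit, fix $\xi\in\Xi_\infty$ and choose $k_0\geq j_0$ with $\xi\in\Xi_{k_0}$. The admissibility hypothesis, together with the coherent choice of $u_{\mu_j}^*$, ensures that the inclusion $\C[\Xi_j]_{\mu_j}\hookrightarrow\C[\Xi_k]_{\mu_k}$ constructed in Section \ref{se:RegfctsHor} is evaluation-preserving on $\Xi_j$, so for every $j\geq k_0$ one has $\psi_{v,\mu_j}(\xi)=\psi_{v,\mu_\infty}(\xi)=\Gamma_\infty f(\xi)$, where the last equality is the definition of $\Gamma_\infty$ from \eqref{de:Rinfty} and Theorem \ref{thm:Gamma}. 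Consequently the sequence $K_{\bZ_j}f(\xi)$ stabilizes at the value $\Gamma_\infty f(\xi)$, which simultaneously establishes the existence of the pointwise limit and the identity $K_{\bZ_\infty}f=\Gamma_\infty f$.

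The second identity $K_{\Xi_\infty}\psi=\Gamma_\infty^{-1}\psi$ is obtained by the same argument with the roles of $\bZ$ and $\Xi$ exchanged: reduce to $\psi=\psi_{v,\mu_\infty}$, use the finite-level identity $K_{\Xi_j}=\Gamma_j^{-1}$ from Theorem \ref{th:IntTr}, and invoke the inverse half of the commutative diagram in Theorem \ref{thm:Gamma}. The only delicate point — more bookkeeping than obstacle — is checking that the chosen normalizations in the finite-dimensional setup propagate consistently into the inductive limit. This is exactly what admissibility provides: the equalities $M_j=M_k\cap G_j$ and $N_j=G_j\cap N_k$ imply that the conical vectors $u_{\mu_j}^*$ assemble to a single $u_{\mu_\infty}^*\in V_{\mu_\infty}^*$, so that the two sides of the limiting identities can be meaningfully compared on each level.
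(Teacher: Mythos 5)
Your proposal is correct and follows essentially the same route as the paper: reduce to a generator $f_{v,\mu_\infty}$ with $v\in V_{\mu_{j_0}}$, apply the finite-level identity $K_{\bZ_j}=\Gamma_j$ from Theorem \ref{th:IntTr}, and observe that once $\xi\in\Xi_{k_0}$ the values $\psi_{v,\mu_j}(\xi)$ stabilize, so the pointwise limit is eventually constant and equals $\Gamma_\infty f(\xi)$. The paper's proof is just a terser version of this same argument (your extra care with the normalizations $u_{\mu_j}^*$ and $e_{\mu_j}^*$ is a welcome elaboration, not a deviation).
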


\begin{proof} If $v\in V_{\mu_j}$, then $v\in V_{\mu_k}$ for all $k\ge j$. Hence Theorem
\ref{th:IntTr} implies that if $s>k>j$ are so that $\xi\in \Xi_k$ we have
\[K_{\bZ_s} f (\xi )= K_{\bZ_k}f (\xi)\, .\]
Hence the sequence becomes constant and the claim follows. The argument for
$\bK_{\Xi_\infty} \psi$ is the same.
\end{proof}

Note that the equations (\ref{eq:prG}) and (\ref{eq:iG}) together with Theorem \ref{th:IntTr} imply that the maps $\gra K_{\bZ_\infty}=\varprojlim \gra K_{\bZ_j}$ and $\gra K_{\Xi_\infty}=\varprojlim K_{\Xi_\infty}$ are well defined. Here $\gra$ stands for $\gra K_{\bZ_j}([f])=[K_{\bZ_j}f]$ respectively $\gra K_{\Xi_j}(f) =[K_{\Xi_j}f]$.

\begin{theorem}\label{th:5:19} Assume that the sequence $\bZ_j$ is admissible. Then
\[\gra \G^\infty f =\gra K_{\bZ_\infty}f \quad \text{ and }\quad \gra \G^{\infty ,-1}f=\gra K_{\Xi_\infty}f\, .\]
\end{theorem}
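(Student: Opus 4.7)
The plan is to reduce the statement to the finite-dimensional identities $K_{\bZ_j}=\Gamma_j$ and $K_{\Xi_j}=\Gamma_j^{-1}$ established in Theorem \ref{th:IntTr}, then simply pass to the projective limit using the compatibility statements already assembled in Theorem \ref{th:4.19}. No new analytic or representation-theoretic input should be necessary; the theorem is essentially a packaging of previous material.

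First, I would note that on each finite level $j$, Theorem \ref{th:IntTr} gives the equalities $K_{\bZ_j}=\Gamma_j$ and $K_{\Xi_j}=\Gamma_j^{-1}$ as $G_j$-maps between the ungraded spaces of regular functions. Since equal maps induce equal maps on the associated graded quotients, after composing with the isomorphism $\kappa_j$ identifying $\C[\bZ_j]$ with $\gra\C[\bZ_j]$ we obtain $\gra K_{\bZ_j}=\gG_j$ and $\gra K_{\Xi_j}=\gG_j^{-1}$ on every finite level.

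Second, the content of Theorem \ref{th:4.19} (in particular identities (\ref{eq:prG}) and (\ref{eq:iG})) is precisely that the family $\{\gG_j\}$ (respectively $\{\gG_j^{-1}\}$) is compatible with the projections $\pr_{j,k}$, so that $\gra\G^{\infty}=\varprojlim\gG_j$ and $\gra\G^{\infty,-1}=\varprojlim\gG_j^{-1}$ are well-defined as maps of the projective limits. Combining this with the finite-level equalities from the previous step yields
\[
\gra K_{\bZ_\infty} \;=\; \varprojlim\gra K_{\bZ_j} \;=\; \varprojlim\gG_j \;=\; \gra\G^{\infty},
\]
and a fully symmetric calculation gives $\gra K_{\Xi_\infty}=\gra\G^{\infty,-1}$. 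Applied to a given $f\in\varprojlim\gra\C[\bZ_j]$ (resp. $\varprojlim\C[\Xi_j]$), this produces the stated pointwise identities.

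The only real obstacle is bookkeeping: one must verify that the identification of $\gra K_{\bZ_j}$ with $\gG_j$ via $\kappa_j$ commutes with the restriction maps $\pr_{j,k}$ used to form the projective limit, so that the two projective systems $\{\gra K_{\bZ_j}\}$ and $\{\gG_j\}$ really do produce the same limit. This compatibility is exactly what admissibility of the sequence $\{\bZ_j\}$ buys (via Theorem \ref{thm:Gamma}), and it is encoded in the commutative diagram of Theorem \ref{th:4.19}. Once this is spelled out, the proof is a one-line consequence of Theorems \ref{th:IntTr} and \ref{th:4.19}.
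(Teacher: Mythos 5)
Your proposal is correct and follows essentially the same route as the paper, which gives no separate proof but notes immediately before the theorem that equations (\ref{eq:prG}) and (\ref{eq:iG}) together with Theorem \ref{th:IntTr} make $\gra K_{\bZ_\infty}=\varprojlim \gra K_{\bZ_j}$ and $\gra K_{\Xi_\infty}$ well defined and equal to the graded limits of $\G_j$ and $\G_j^{-1}$. Your expansion of the bookkeeping (level-wise equality from Theorem \ref{th:IntTr}, then compatibility with the projections $\pr_{j,k}$ from Theorem \ref{th:4.19}) is exactly the intended argument.
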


In order to make the results of Section~\ref{se:RT-spheres} useful for limits of symmetric spaces, we first have to extend the notion of a sphere of radius $a$. So let $a=\varinjlim a_j\in A_\infty:= \varinjlim A_j$. We call $A_\infty$ \emph{regular}, if $Z_{K_j}(a_j)=M_j$ for all $j$. This is a useful notion only in the finite rank case, so we will assume for the remainder of this section that we are in the situation of Remark~\ref{rem:fin-rank}.

A simple calculation shows that the diagram
$$\xymatrix{
\bX_j\ar[r]\ar[d]_{\iota_{j,e}} &\bX_k\ar[d]_{\iota_{k,e}}\\
\C[\bX_j]^*\ar[r]&\C[\bX_k]^*
}
$$
of $G_j$-module morphisms is commutative. The normalizations from Section~\ref{se:RT-spheres} are compatible and yield the following commutative diagram
$$\xymatrix{
\Sph{a_j}(\bX_j)\ar[r]\ar[d]_{\iota_{j,a_j}}&\Sph{a_k}(\bX_k)\ar[d]_{\iota_{k,a_k}}\\
\C[\bX_j]^*\ar[r]&\C[\bX_k]^*\\
\Xi_{j,o}\ar[r]\ar[u]^{\iota_{j}}&\Xi_{k,o}\ar[u]^{\iota_{k}}
}
$$
of $G_j$-module morphisms. In fact, for the commutativity of the upper square one uses the equality $a_j^{\mu_j^*}=a_k^{\mu_k^*}=a^{\mu^*}$, whereas the commutativity of the lower square is a consequence of Theorem~\ref{thm:Gamma}. Thus the $\Sph{a_j}(\bX_j)$ and the $\C[\bX_j]^*$ form inductive systems. For the corresponding inductive limits $\Sph{a}(\bX_\infty)$ and $\C[\bX_\infty]^*$ we obtain the commutative diagram
\begin{equation*}
\xymatrix@M=7pt{
&G_{\infty,o}/M_{\infty,o}\ar [dl]_{q_{\infty,a}}\ar [dr]^{q_\infty}&\\
\Sph {a}(\bX_\infty) \ar [dr]^{\iota_{\infty,a}} && \ar [dl]_{\iota_\infty} \Xi_{\infty,o}\\
&\C[\bX_\infty]^*&}
\end{equation*}

Using this notation Proposition~\ref{prop4_12}(i) remains true:

\begin{equation}\label{limit1}
\lim_{a\to \infty}\iota_{\infty,a}\circ q_{\infty,a}=\iota_\infty\circ q_\infty.
\end{equation}

Due to $G$-equivariance, it suffices to prove that
$$\lim_{\alimit}\iota_{\infty,a}(q_a(eM))=\iota_\infty(q_\infty(eM)).$$
This means  
that for fixed $\lambda\in \Lambda^+$ and $j\ge j_o$ we have to verify
$$\lim_{\alimit} \frac{a\cdot v_{j,\lambda}^0}{a^{\lambda^*}}=v_{j,\lambda}^+.$$
Writing $v_{j,\lambda}^0\in \C[\bX_j]_{\lambda}^*$ as a sum of weight vectors (the highest weight being $\lambda^*$), we see that
$$\lim_{\alimit} \frac{a\cdot v_{j,\lambda}^0}{a^{\lambda^*}}=k_{j,\lambda} v_{j,\lambda}^+$$
for some constant $k_{j,\lambda}$. The calculation
\begin{equation}
\frac{\langle f_{j,\lambda}^-, a\cdot v_{j,\lambda}^0\rangle}{a^{\lambda^*}}
=\frac{ (a^{-1}f_{j,\lambda}^-)(x_o)}{a^{\lambda^*}}
=\frac{a^{\lambda^*}f_{j,\lambda}^-(x_o)}{a^{\lambda^*}}=1.
\end{equation}
shows that $k_{j,\lambda}=1$. This implies \eqref{limit1}.

Equation \eqref{limit1} yields immediately the convergence of the induced maps of function spaces.

\begin{equation} \label{limit2}
\lim_{\alimit}q_{\infty,a}^*\circ \iota_{\infty,a}^*=q_\infty^*\circ \iota_\infty^*.
\end{equation}

It was shown in \cite[Thm. 4.7]{DOW12} that the limit
\[\bc_\infty (\mu_\infty ) :=\lim_{j\to \infty} \bc (\mu_j+\rho_j)\]
exists and is strictly positive if the rank of $\bX_\infty$ is finite. Define
\[\cR^* : \C_i[\Xi_\infty]\to \C_i[\bZ_\infty] \]
by
\begin{equation}\label{de:Rinf}\cR^*(f)(x)=\lim_{j\to \infty} \cR^*_jf (x)\, .
\end{equation}
\begin{theorem}\label{th:522} Assume that the rank of $\bX_\infty$ is
finite. Let $f\in \C_i[\Xi_\infty]$. Then the pointwise limit (\ref{de:Rinf}) exists
and for $f\in  \C[\Xi_\infty]_{\mu_\infty}$ we have
\[\cR^* f= \bc_\infty (\mu_\infty)^{1/2}\, \Gamma^{-1}f\quad \text{ and } \quad
\cR^*_\infty (\iota^*(f))=\bc_\infty (\mu_\infty) f\, .\]
\end{theorem}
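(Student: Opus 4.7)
Since both displayed equalities are linear in $f$ and $\C_i[\Xi_\infty]=\bigoplus_{\mu_\infty\in\gL^+_\infty}\C[\Xi_\infty]_{\mu_\infty}$ by Theorem~\ref{th:CXiInf}, it suffices to prove them on a single isotypic component. Fix $\mu_\infty=\varinjlim\mu_j$, write $f=\psi_{w,\mu_\infty}\in\C[\Xi_\infty]_{\mu_\infty}$ with $w\in V_{\mu_{j_0}}$ at some level $j_0$, and fix $x\in\bZ_\infty$; after enlarging $j_0$, we may assume $x\in\bZ_{j_0}$ as well. The finite-rank hypothesis ensures that $\fa_j=\fa_\infty$ and $\gL^+_j=\gL^+_\infty$ are stationary, so that the scalars $\bc(\mu_j+\rho_j)$ and their limit $\bc_\infty(\mu_\infty)$ are meaningful; the latter exists and is strictly positive by \cite[Thm.~4.7]{DOW12}.

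For the first identity I would combine \eqref{eq-Radon-Gamma} with Lemma~\ref{le:cMu} at each finite level $j\ge j_0$, getting
\[
\cR_j^*f = c_{\mu_j}\,\Gamma_j^{-1}f,\qquad c_{\mu_j}=\sqrt{\bc(\mu_j+\rho_j)}.
\]
Theorem~\ref{thm:Gamma} identifies $\Gamma_j^{-1}\psi_{w,\mu_j}=f_{w,\mu_j}$ with the restriction of $f_{w,\mu_\infty}=\Gamma_\infty^{-1}f$ to $\bZ_j$, so $\Gamma_j^{-1}f(x)$ is independent of $j\ge j_0$ and equals $\Gamma_\infty^{-1}f(x)$. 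Since $c_{\mu_j}\to\sqrt{\bc_\infty(\mu_\infty)}$, the pointwise limit (\ref{de:Rinf}) exists and
\[
\cR^*f(x)=\sqrt{\bc_\infty(\mu_\infty)}\,\Gamma_\infty^{-1}f(x),
\]
which is the first displayed equality.

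For the second identity I would invoke Proposition~\ref{prop4_12}(iv) at each finite level $j$: for every $f\in\C[\bZ_j]_{\mu_j}$,
\[
\cR_j^*(\iota_j^*(f)) = \bc(\mu_j+\rho_j)\,f.
\]
The maps $\iota_j^*$ are compatible with the inclusions $\bZ_j\hookrightarrow\bZ_k$ (as noted in the commutative diagrams preceding Proposition~\ref{prop5_4}), so $\iota_\infty^*f$ agrees with $\iota_j^*f$ on $\Xi_j$ and the left-hand side converges pointwise, by the very definition \eqref{de:Rinf}, to $\cR^*(\iota_\infty^*(f))$. The right-hand side converges pointwise to $\bc_\infty(\mu_\infty)f$, giving the second identity.

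\textbf{Main obstacle.} The only nontrivial point is the stabilization step: one must verify carefully, using both parts of Theorem~\ref{thm:Gamma} together with the identification $V_{\mu_j}\hookrightarrow V_{\mu_k}$ of Theorem~\ref{le:isr}, that $\Gamma_j^{-1}f(x)$ is genuinely eventually constant in $j$—this is really bookkeeping, but the apparent asymmetry between the square-root in the first identity and its absence in the second makes it essential to keep straight which formula ($\cR_j^*=c_{\mu_j}\Gamma_j^{-1}$ from Lemma~\ref{le:cMu} or $\cR_j^*\iota_j^*=\bc(\mu_j+\rho_j)\,\mathrm{id}$ from Proposition~\ref{prop4_12}(iv)) is being fed into the limit at each stage.
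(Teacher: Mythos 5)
Your proposal is correct and follows essentially the same route as the paper: reduce to a single isotypic component via the decomposition of $\C_i[\Xi_\infty]$, apply \eqref{eq-Radon-Gamma} together with Lemma~\ref{le:cMu} and Proposition~\ref{prop4_12}(iv) at each finite level, use the stabilization coming from Theorem~\ref{thm:Gamma} (the paper cites Theorem~\ref{th:5:19} and Proposition~\ref{prop5_4} for the same purpose), and pass to the limit using $\bc(\mu_j+\rho_j)\to\bc_\infty(\mu_\infty)$. The ``asymmetry'' you flag between $\bc_\infty(\mu_\infty)^{1/2}$ and $\bc_\infty(\mu_\infty)$ is inherited directly from the differing normalizations in Lemma~\ref{le:cMu} and Proposition~\ref{prop4_12}(iv), so your bookkeeping matches the paper's.
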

\begin{proof} As every function in $\C_i[\Xi_\infty]$ is a finite sum of
elements in $\C[\bX_\infty]_{\nu}$ we only have to show this for
fixed $\mu_\infty\in\gL^+_\infty$. But then the claim
follows from   (\ref{eq-Radon-Gamma}), Theorem \ref{th:5:19},
Proposition \ref{prop4_12}, part (iv), and Proposition \ref{prop5_4}.
\end{proof}

As  we are assuming that the rank of $\bX_\infty$ is finite, it follows from \cite{DOW12} that
$\left(\hat{V}_{\mu_\infty}^*\right)^{K_\infty}\not=\{0\}$. Denote by $\pr_\infty$ the
 orthogonal projection
\[\pr_\infty :\hat{ V}_{\mu_\infty}^* \to \left(\hat{V}_{\mu_\infty}^*\right)^{K_\infty}\, .\] 

It follows also from the calculations in \cite{DOW12} that the sequence $\{e_{\mu_j}^*\}$ converges
to $e^*_{\mu_\infty}$ in the Hilbert space $\hat{V}_{\mu_\infty}^*=\hat{V}_{\mu^*_\infty}$. Hence
\[\pr_\infty \left (\hat{ V}_{\mu_\infty}^*\right)= \left(\hat{V}_{\mu_\infty}^*\right)^{K_\infty}
\subset \varprojlim V_{\mu_j}^*\, .\]
Finally, a simple calculation shows that
\begin{equation}\label{eq:pr}
\pr_\infty (w)=\lim_{j\to \infty} \int_{K_j} \pi_{\mu_\infty} (k)w\, dk\,.
\end{equation}

If $f=f_{w,\mu_\infty}\in \C[\bZ_\infty]_{\mu_\infty}$,  then there exists $j_o$ such that
$f|_{\bZ_j}=f_{w,\mu_j}\in \C[\bZ_j]_{\mu_j}$ for all $j\ge j_o$. We have
\begin{equation}\label{eq:ra}
\cR_{a}(f|_{\bZ_j})(g\cdot \cS_a)=\int_{K_j}\du{w}{\pi_{\mu_j}^*(g)\pi_{\mu_j}^*(k)
\pi_{\mu_j}^* (a ) e_{\mu_j}^*}\, dk\, .
\end{equation}

\begin{theorem}\label{th:525} Let $f\in \C_i[\bZ_\infty]$. Then the pointwise limit
\[\cR_{a,\infty}f (g\cdot \cS_a):=\lim_{j\to\infty} \cR_{a,j}f(g\cdot \cS_a)\]
exists and the following holds:
\begin{enumerate}
\item $\cR_{a,\infty}f(g\cdot \cS_a)=\du{w}{\pi_{\mu_\infty}^*(g)\pr_\infty (\pi_{\mu_\infty}^* (a)
e_{\mu_\infty}^*)}$ if $f\in \C_i[\bZ_\infty]_{\mu_\infty}$.
\item $\displaystyle \lim_{a\to \infty} \cR^*_{a,\infty}\circ \iota_a^*=\cR^*_\infty \circ \iota^*$.
\end{enumerate}
\end{theorem}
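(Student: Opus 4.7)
The plan is to prove part (1) by a direct computation from formula (\ref{eq:ra}) combined with the projection identity (\ref{eq:pr}), and then to derive part (2) by coupling part (1) (or rather its dual companion) with Proposition \ref{prop4_12} at each finite level together with Theorem \ref{th:522}. The common thread in both parts is to work on a single isotypic component and exploit the fact that, because the data $w$, $g$, $a$ ultimately come from a finite level $j_0$, everything is encoded in the projective system $V_{\mu_\infty}^{*}=\varprojlim V_{\mu_j}^{*}$.

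For part (1), I will write $f=f_{w,\mu_\infty}$ with $w\in V_{\mu_\infty}$ and choose $j_0$ large enough that $w\in V_{\mu_{j_0}}$, $g\in G_{j_0}$, and $a\in A_{j_0}$. For $j\ge j_0$ formula (\ref{eq:ra}) gives
\[
\cR_{a,j}f(g\cdot \cS_a)=\Bigl\langle w,\,\pi_{\mu_j}^{*}(g)\int_{K_j}\pi_{\mu_j}^{*}(k)\,\pi_{\mu_j}^{*}(a)\,e_{\mu_j}^{*}\,dk\Bigr\rangle,
\]
after pulling $\pi_{\mu_j}^{*}(g)$ outside by the linearity and continuity of the pairing. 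The compatibility of $\{e_{\mu_j}^{*}\}$ and of the operators $\pi_{\mu_j}^{*}(a)$ with the projections $\pr_{j,k}\colon V_{\mu_k}^{*}\to V_{\mu_j}^{*}$ then lets me rewrite the inner integral as the $K_j$-average of the projective-limit vector $\pi_{\mu_\infty}^{*}(a)\,e_{\mu_\infty}^{*}$, and (\ref{eq:pr}) shows that this $K_j$-average converges as $j\to\infty$ to $\pr_\infty(\pi_{\mu_\infty}^{*}(a)\,e_{\mu_\infty}^{*})$. Pairing against the fixed $w$ establishes both the existence of the pointwise limit and the stated formula.

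For part (2), $G_\infty$-equivariance and linearity reduce matters to a single isotypic component, so I fix $f\in \C[\bZ_\infty]_{\mu_\infty}$ and a point $x\in \bZ_\infty$, with $j_0$ chosen so that $f|_{\bZ_{j_0}}\in \C[\bZ_{j_0}]_{\mu_{j_0}}$ and $x\in \bZ_{j_0}$. By Schur's lemma the $G_j$-equivariant endomorphism $\cR^{*}_{a,j}\circ \iota^{*}_{a,j}$ acts on the irreducible module $\C[\bZ_j]_{\mu_j}$ as a scalar $c_j(a)$, and Proposition \ref{prop4_12}(iii)--(iv) yields $\lim_{a\to\infty}c_j(a)=\bc(\mu_j+\rho)$. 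Since $\cR^{*}_{a,\infty}\circ \iota^{*}_{a,\infty}$ is the pointwise limit of the $\cR^{*}_{a,j}\circ \iota^{*}_{a,j}$, it acts on $\C[\bZ_\infty]_{\mu_\infty}$ as a scalar $c_\infty(a)=\lim_j c_j(a)$; sending $a\to\infty$ and invoking the convergence $\bc(\mu_j+\rho)\to \bc_\infty(\mu_\infty)$ from \cite[Thm.~4.7]{DOW12} together with Theorem \ref{th:522} would then yield the required equality.

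The main obstacle is justifying the exchange of the $a$-limit and the $j$-limit, i.e.\ the identity $\lim_{a\to\infty}\lim_j c_j(a)=\lim_j\lim_{a\to\infty}c_j(a)=\bc_\infty(\mu_\infty)$. I expect to handle it by one of two routes: either establishing uniform control on the scalars $c_j(a)$ in $j$ as $a$ tends to infinity within a fixed Weyl chamber, which allows a standard Moore--Osgood-type interchange, or, more cleanly, writing $c_\infty(a)$ explicitly through the closed expression obtained in part (1) and evaluating its $a\to\infty$ limit directly on the vector $\pr_\infty(\pi_{\mu_\infty}^{*}(a)\,e_{\mu_\infty}^{*})$. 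The second route looks preferable because the asymptotic stabilisation of the Harish-Chandra $\bc$-function proved in \cite{DOW12} reduces the problem to a single limit computation on the spherical and highest-weight vectors in $V_{\mu_\infty}^{*}$.
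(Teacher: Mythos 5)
Your proposal follows essentially the same route as the paper, whose entire proof is the one-line citation of \eqref{eq:pr}, \eqref{eq:ra} and Proposition \ref{prop4_12}: your part (1) is the same direct computation from \eqref{eq:ra} and \eqref{eq:pr}, and your part (2) is the same reduction to a single isotypic component combined with the finite-level Proposition \ref{prop4_12} and Theorem \ref{th:522}. The interchange of the $j$- and $a$-limits that you flag is indeed left implicit in the paper; your second proposed resolution --- evaluating $\lim_{a\to\infty}$ directly on the explicit vector $\pr_\infty\bigl(\pi^*_{\mu_\infty}(a)e^*_{\mu_\infty}\bigr)$ furnished by part (1) --- is the natural reading of the paper's citation of \eqref{eq:pr} and \eqref{eq:ra} in the proof of part (2), so your write-up is if anything more careful than the source.
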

\begin{proof} This follows from \eqref{eq:pr},  \eqref{eq:ra}, and Proposition \ref{prop4_12}.
\end{proof}

\begin{remark} {\rm We note that the following diagrams do not
commute
$$\xymatrix{
\C[\Xi_j]\ar[r]^{\iota_{k,j}}\ar[d]_{\cR_j^*} &\C[\Xi_k]\ar[d]^{\cR_k^*}\\
\C[\bX_j]\ar[r]_{\iota_{k,j}}&\C[\bX_k]
}
\quad
\xymatrix{
\C[\Sph{a_j}(\bX_j)]\ar[r]^{\iota_{k,j}}\ar[d]_{\cR_{j,a_j}^*} &\C[\Sph{a_k}(\bX_k)]\ar[d]^{\cR_{k,a_k}^*}\\
\C[\bX_j]\ar[r]_{\iota_{k,j}}&\C[\bX_k]
}
$$
This follows from the corresponding commutative diagrams for the normalized
dual Radon transforms $\gamma_j^{-1}$ and the normalizing factor that relates those two transforms. This
makes the corresponding theory for infinite rank spaces problematic as in that case
$\lim_{j\to\infty} \bc (\mu_j+\rho_j)=0$.} \phantom{m}\hfill $\qed$
\end{remark}


\begin{thebibliography}{99}

\bibitem[D{\'O}13]{DO13} M. Dawson and G. {\'O}lafsson,
\textit{Conical representations for direct limits of symmetric spaces}. In
preparation.

\bibitem[D{\'O}W12]{DOW12} M. Dawson, G. {\'O}lafsson and J. Wolf,
\textit{Direct Systems of Spherical Functions and Representations},
\textit{Journal of Lie Theory} \textbf{23} (2013),
711--729.

\bibitem[E73]{Ebe}
P.E. Eberlein,
\textit{Geodesic flows on negatively curved manifolds.} II,
Trans. Amer. Math. Soc. \textbf{178} (1973), 57--82.

\bibitem[E96]{Ebe2}
\bysame ,
\textit{Geometry of nonpositively curved manifolds.}
Chicago Lectures in Mathematics.
University of Chicago Press, 1996.


\bibitem[G06]{G06} S. Gindikin, \textit{The horospherical Cauchy-Radon transform on compact symmetric spaces},
Mosc. Math. J.  \textbf{6}  (2006),   299--305.


\bibitem[GK62]{GK62} S. Gindikin and F. I. Karpelevich, \textit{Plancherel measure
for symmetric Riemannian spaces of
non-positive curvaure},
Dokl. Akad. Nauk. SSSR \textbf{145} (1962), 252--255, English translation,
Soviet Math. Dokl. \textbf{3} (1962), 1962--1965.

\bibitem[GK{\'O}06]{GKO} S. Gindikin, B. Kr\"otz, and G. {\'O}lafsson, \textit{Holomorophic horospherical transform on
non-compactly causal spaces},  IMRN \textbf{2006} (2006),
1--47.

\bibitem[H66]{H66} S. Helgason,   \textit{A duality in integral geometry on symmetric spaces},
 Proc. U.S.-Japan Seminar in Differential Geometry, Kyoto, Japan, l965,
pp. 37-56. Nippon Hyronsha, Tokyo l966.

\bibitem[H70]{H70}  \bysame :  \textit{A duality for symmetric spaces with applications to group representations},
 Advan. Math. \textbf{5} (l970), 1--154.

\bibitem[H78]{H78} \bysame , \textit{Differential Geometry, Lie Groups,
and Symmetric Spaces}. Academic Press, 1978.

\bibitem[H84]{H84} \bysame , \textit{Groups and Geometric Analysis},
Academic Press, 1984.

\bibitem[H94]{H94} \bysame , \textit{Geometric Analysis on Symmetric Spaces},
Math. Surveys Monogr. \textbf{39}, Amer. Math. Soc. Providence, RI 1994.

\bibitem[HPV02]{HPV02} J. Hilgert, A. Pasquale and E. B.  Vinberg, \textit{The
dual horospherical Radon transform for polynomials}, Mosc. Math. J.  \textbf{2}  (2002),  113--126, 199.

\bibitem[HPV03]{HPV03} \bysame, \textit{The dual horospherical Radon transform as a limit of spherical Radon transforms},  Lie groups and symmetric spaces,  135--143, Amer. Math. Soc. Transl. Ser. 2, 210, Amer. Math. Soc., Providence, RI, 2003.

\bibitem[HY00]{HY00} Z. Huang and J. Yan, \textit{Introduction to Infinite Dimensional
Stochastic Analysis}. Mathematics and its Applications. Kluwer Academic Press, 2000.
\bibitem[KS77]{KS77} V. I. Kolomycev and J. S. Samoilenko,
\textit{On irreducible representation of inductive limits of groups}, Ukrainian Math. J.
\textbf{29} (1977), 402--405.
 
\bibitem[{\'O}W11a]{OW11a} G. \'Olafsson and J. Wolf,  \textit{Extension of Symmetric Spaces and Restriction of Weyl Groups and Invariant Polynomials},
 \textit{Contemporary Math.} \textbf{544} (2011), 85--100.

\bibitem[{\'O}W11b]{OW11b} \bysame , \textit{The Paley-Wiener Theorem and Limits of Symmetric Spaces}, to appear in JGA. $\{$arXiv:1101.4419$\}$.

\bibitem[O90]{O90} G. Olshanskii, \textit{Unitary representations of infinite
dimensional pairs $(G,K)$ and the formalism of R. Howe}. In: Eds. A. M. Vershik
and D. P. Zhelobenko: Representation of Lie Groups and Related Topics. \textit{Adv. Stud.
Contemp. Math.} \textbf{7}, Gordon and Breach, 1990.

\bibitem[W08]{W08}
J. A. Wolf, \textit{Infinite dimensional multiplicity free spaces I:
Limits of compact commutative spaces}. In ``Developments and Trends in Infinite
Dimensional Lie Theory, ed. K.-H. Neeb \& A. Pianzola, Birkh\" auser, 2013.
\bibitem[W09]{W09}
\bysame ,
\textit{Infinite dimensional multiplicity free spaces II:
Limits of commutative nilmanifolds}, Contemporary Mathematics,
 \textbf{491} (2009), 179--208.

\end{thebibliography}
\end{document}